\newtheorem{theorem}{Theorem}
\newtheorem{definition}{Definition}
\newtheorem{criterion}{Criterion}
\newtheorem{proposition}{Proposition}
\theoremstyle{definition}
\newtheorem{remark}{Remark}
\theoremstyle{plane}
\def \beq{ \begin{equation} }
\def \eeq{\end{equation}}
\title{Relative equilibria in the 3-dimensional curved $n$-body problem}
\begin{document}
\maketitle
\markboth{Florin Diacu}{Relative equilibria in the 3-dimensional curved $n$-body problem}
\vspace{-0.2cm}
\author{\begin{center}
Florin Diacu\\
\smallskip
{\footnotesize Pacific Institute for the Mathematical Sciences\\
and\\
Department of Mathematics and Statistics\\
University of Victoria\\
P.O.~Box 3060 STN CSC\\
Victoria, BC, Canada, V8W 3R4\\
diacu@math.uvic.ca\\
}\end{center}

}


\begin{center}
\today
\end{center}

\begin{abstract}
We consider the $3$-dimensional gravitational $n$-body problem, $n\ge 2$, in spaces of constant Gaussian curvature $\kappa\ne 0$, i.e.\ on spheres ${\mathbb S}_\kappa^3$, for $\kappa>0$, and on hyperbolic manifolds ${\mathbb H}_\kappa^3$, for $\kappa<0$. Our goal is to define and study relative equilibria, which are orbits whose mutual distances remain constant in time. We also briefly discuss the issue of singularities in order to avoid impossible configurations. We derive the equations of motion and define six classes of relative equilibria, which follow naturally from the geometric properties of ${\mathbb S}_\kappa^3$ and ${\mathbb H}_\kappa^3$. Then we prove several criteria, each expressing the conditions for the existence of a certain class of relative equilibria, some of which have a simple rotation, whereas others perform a double rotation, and we describe their qualitative behaviour. In particular, we show that in ${\mathbb S}_\kappa^3$ the bodies move either on circles or on Clifford tori, whereas in ${\mathbb H}_\kappa^3$ they move either on circles or on hyperbolic cylinders. Then we construct concrete examples for each class of relative equilibria previously described, thus proving that these classes are not empty. We put into the evidence some surprising orbits, such as those for which a group of bodies stays fixed on a great circle of a great sphere of ${\mathbb S}_\kappa^3$, while the other bodies rotate uniformly on a complementary great circle of another great sphere, as well as a large class of quasiperiodic relative equilibria, the first such non-periodic orbits ever found in a 3-dimensional $n$-body problem. Finally, we briefly discuss other research directions and the future perspectives in the light of the results we present here.
\end{abstract}

\noindent --------------------------------------------------------------------------------------------

{\footnotesize\noindent {\it 2000 Mathematics Subject Classification}. Primary 70F10; Secondaries 34C25, 37J45. \\
{\it Key words and phrases}. Celestial mechanics, $n$-body problems, spaces of constant curvature, fixed points, periodic orbits, quasiperiodic orbits, relative equilibria, qualitative behaviour of solutions.}

\newpage


\tableofcontents

\newpage

\part{\rm INTRODUCTION}

In this introductory part, we will define the problem, explain its importance, tell its history, and outline the main results we obtained.

\section{The problem}

We consider in this paper a natural extension of the Newtonian $n$-body problem to spaces of non-zero constant curvature. Since there is no unique way of generalizing the classical equations of motion such that they can be recovered when the space in which the bodies move flattens out, we seek a potential that satisfies the same basic properties as the Newtonian potential in its simplest possible setting, that of one body moving around a fixed centre, which is a basic problem in celestial mechanics known as the Kepler problem. Two basic properties stick out in this case: the Newtonian potential is a harmonic function in 3-dimensional space, i.e.\ it satisfies Laplace's equation, and it generates a central field in which all bounded orbits are closed, a result proved by Joseph Louis Bertrand in 1873, \cite{Ber}. 

On one hand, the curved potential we define in Section \ref{equationsofmotion} approaches the classical Newtonian potential when the curvature tends to zero, whether through positive or negative values. In the case of the Kepler problem, on the other hand, this potential satisfies Bertrand's property and is a solution of the Laplace-Beltrami equation, \cite{Kozlov}, the natural generalization of Laplace's equation to Riemannian and pseudo-Riemannian manifolds, which include the spaces of constant curvature $\kappa\ne 0$ we are interested in, namely the spheres ${\mathbb S}_\kappa^3$, for $\kappa>0$, and the hyperbolic manifolds ${\mathbb H}_\kappa^3$, for $\kappa<0$. For simplicity, from now on we will refer to the $n$-body problem defined in these spaces of constant curvature as {\it the curved $n$-body problem}.

In the Euclidean case, the Kepler problem and the 2-body problem are equivalent. The reason for this overlap is the existence of the integrals of the centre of mass and of the linear momentum. It can be shown with their help that the equations of motion appear identical, whether the origin of the coordinate system is fixed at the centre of mass or fixed at one of the two bodies. For non-zero curvature, however, things change. As we will later see, the equations of motion of the curved $n$-body problem lack the integrals of the centre of mass and of the linear momentum, which prove to characterize only the Euclidean case. Consequently the curved Kepler problem and the curved 2-body problem are not equivalent anymore. It turns out that, as in the Euclidean case, the curved Kepler problem is integrable, but, unlike in the Euclidean case, the curved 2-body problem is not, \cite{Shc1}. As expected, the curved $n$-body problem is not integrable for $n\ge 3$, a property that is well-known to be true in the Euclidean case.

Our main goal is to find solutions that are specific to each of the spaces corresponding to $\kappa<0$, $\kappa=0$, and $\kappa>0$. We already succeeded to do that in some previous papers, but only in the 2-dimensional case, \cite{DiacuPol}, \cite{DiacuIntrinsic}, \cite{Diacu1}, \cite{Diacu001}, \cite{Diacu002}. In this paper, we will see that the 3-dimensional curved $n$-body problem puts into the evidence even more differences between the qualitative behaviour of orbits in each of these spaces.

\section{Importance}

The curved $n$-body problem lays a bridge between the theory of dynamical systems and the geometry and topology of 3-dimensional manifolds of constant curvature, as well as with the theory of regular polytopes. As the results obtained in this paper show, many of which are surprising and nonintuitive, the geometry of the configuration space (i.e.\ the space in which the bodies move) strongly influences their dynamics, including some important qualitative properties, stability among them. Some topological concepts, such as the Hopf fibration and the Hopf link, some geometric objects, such as the pentatope, the Clifford torus,  and the hyperbolic cylinder, or some geometric properties, such as the Hegaard splitting of genus 1, become essential tools for understanding the gravitational motion of the bodies. Since we provide here only a first study in this direction of research, reduced to the simplest orbits the equations of motion have, namely the relative equilibria, we expect that many more connections between dynamics, geometry, and topology will be discovered in the near future through a deeper exploration of the 3-dimensional case.

\section{History}

The first researchers that took the idea of gravitation beyond the Euclidean space were Nikolai Lobachevsky and J\'anos Bolyai, who also laid the foundations of hyperbolic geometry independently of each other. In 1835, Lobachev\-sky proposed a Kepler problem in the 3-dimensional hyperbolic space, ${\mathbb H}^3$, by defining an attractive force proportional to the inverse area of the 2-dimensional sphere of radius equal to the distance between bodies, \cite{Lob}. Independently of him, and at about the same time, Bolyai came up with a similar idea, \cite{Bol}. Both of them understood the intimate connection between geometry and physical laws, a relationship that proved very prolific ever since.

These co-discoverers of the first non-Euclidean geometry had no followers in their pre-relativistic attempts until 1860, when Paul Joseph Serret\footnote{Paul Joseph Serret (1827-1898) should not be confused with another French mathematician, Joseph Alfred Serret (1819-1885), known for the Frenet-Serret formulas of vector calculus.} extended the gravitational force to the sphere ${\mathbb S}^2$ and solved the corresponding Kepler problem, \cite{Ser}. Ten years later, Ernst Schering revisited Lobachevsky's gravitational law for which he obtained an analytic expression given by the
curved cotangent potential we study in this paper, \cite{Sche}. Schering also wrote that Lejeune Dirichlet had told some friends to have dealt with the same problem during his last years in Berlin\footnote{This must have happened around 1852, as claimed by Rudolph Lipschitz, \cite{Lip72}.}, \cite{Sche1}, but Dirichlet never published anything in this direction, and we found no evidence of any manuscripts he would have left behind in which he dealt with this particular topic. In 1873, Rudolph Lipschitz considered the problem in ${\mathbb S}^3$, but defined a potential proportional to $1/\sin\frac{r}{R}$, where $r$ denotes the distance between bodies and $R$ is the curvature radius, \cite{Lip}. He obtained the general solution of this problem only in terms of elliptic functions. His failure to provide an explicit formula, which could have him helped draw some conclusions about the motion of the bodies, showed the advantage of Schering's approach. 

In 1885, Wilhelm Killing adapted Lobachevsky's gravitational law to ${\mathbb S}^3$ and defined an extension of the Newtonian force given by the inverse area of a 2-dimensional sphere (in the spirit of Schering), for which he proved a generalization of Kepler's three laws, \cite{Kil10}. Then an important step forward took place. In 1902, Heinrich Liebmann\footnote{Although he signed his papers and books as Heinrich Liebmann, his full name was Karl Otto Heinrich Liebmann (1874-1939). He did most of his work in Munich and Heidelberg, where he became briefly the university's president, before the Nazis forced him to retire. A remembrance colloquium was held in his honour in Heidelberg in 2008. On this occasion, Liebmann's son, Karl-Otto Liebman, donated to the University of Heidelberg an oil portrait of his father, painted by Adelheid Liebmann, \cite{Sgries}} showed that the orbits of the Kepler problem are
conics in ${\mathbb S}^3$ and ${\mathbb H}^3$ and further generalized Kepler's three laws to $\kappa\ne 0$, \cite{Lie1}. One year later, Liebmann proved ${\mathbb S}^2$- and ${\mathbb H}^2$-analogues of Bertrand's theorem, \cite{Ber}, \cite{Win}, which states that for the Kepler problem there exist only two analytic central potentials in the Euclidean space for which all bounded orbits are closed, \cite{Lie2}. He thus made the most important contributions of the early times to the model, by showing that the approach of Bolyai and Lobachevsky led to a natural extension of Newton's gravitational law to spaces of constant curvature. 

Liebmann also summed up his results in the last chapter of a book on hyperbolic geometry published in 1905, \cite{Lie3}, which saw two more editions, one in 1912 and the other in 1923. Intriguing enough, in the third edition of his book he replaced the constant-curvature approach with relativistic considerations.  

Liebmann's change of mind about the importance of the constant-curvature approach may explain why this direction of research was ignored in the decades immediately following the birth of special and general relativity. The reason for this neglect was probably connected to the idea that general relativity could allow the study of 2-body problems on manifolds with variable Gaussian curvature, so the constant-curvature case appeared to be outdated. Indeed, although the most important subsequent success of relativity was in cosmology and related fields, there were attempts to discretize Einstein's equations and define a gravitational $n$-body problem. Remarkable in this direction were the contributions of Jean Chazy, \cite{Cha}, Tullio Levi-Civita, \cite{Civ}, \cite{Civita}, Arthur Eddington, \cite{Edd}, Albert Einstein, Leopold 
Infeld\footnote{A vivid description of the collaboration between Einstein and Infeld appears in Infeld's autobiographical book \cite{Infeld}.}, Banesh Hoffmann, \cite{Ein}, and Vladimir Fock, \cite{Fock}. Subsequent efforts led to refined post-Newtonian approximations (see, e.g., \cite{Soff1}, \cite{Soff2}, \cite{Soff3}), which prove very useful in practice, from understanding the motion of artificial satellites---a field with applications in geodesy and geophysics---to using the Global Positioning System (GPS), \cite{Soff4}.

But the equations of the $n$-body problem derived from relativity are highly complicated even for $n=2$, and they are not prone to analytical studies similar to the ones done in the classical case. This is probably the reason why the need for some simpler equations revived the research on the motion of 2 bodies in spaces of constant curvature.

Starting with 1940, Erwin Schr\"odinger developed a quantum-mecha\-nical analogue of the Kepler problem in ${\mathbb S}^2$, \cite{Schr}. Schr\"odinger used the same cotangent potential of Schering and Liebmann, which he deemed to be the natural extension of Newton's law to the sphere\footnote{``The correct form of [the] potential (corresponding to $1/r$ of the flat space) is known to be $\cot\chi$,'' \cite{Schr}, p.~14.}. Further results in this direction were obtained by Leopold Infeld, \cite{Inf}, \cite{Ste}. In 1945, Infeld and his student Alfred Schild extended this problem to spaces of constant negative curvature using a potential given by the hyperbolic cotangent of the distance, \cite{InfS}. A comprehensive list of the above-mentioned works also appears in \cite{Sh}, except for Serret's book, \cite{Ser}, which is not mentioned. A bibliography of works about mechanical problems in spaces of constant curvature is given in \cite{Shch2}, \cite{Shc1}.

The Russian school of celestial mechanics led by Valeri Kozlov also studied the curved 2-body problem given by the cotangent potential and considered related problems in spaces of constant curvature starting with the 1990s, \cite{Kozlov}. An important contribution to the case $n=2$ and the Kepler problem belongs to Jos\'e Cari\~nena, Manuel Ra\~nada, and Mariano Santander, who provided a unified approach in the framework of differential geometry with the help of intrinsic coordinates, emphasizing the dynamics of the cotangent potential in ${\mathbb S}^2$ and ${\mathbb H}^2$, \cite{Car} (see also \cite{Car2}, \cite{Gut}). They also proved that, in this unified context, the conic orbits known in Euclidean space extend naturally to spaces of constant curvature, in agreement with the results obtained by Liebmann, \cite{Sh}. Moreover, the authors used the rich geometry of the hyperbolic plane to search for new orbits, whose existence they either proved or conjectured.

Inspired by the work of Cari\~nena, Ra\~nada, and Santander, we proposed a new setting for the problem, which allowed us an easy derivation of the equations of motion for any $n\ge 2$ in terms of extrinsic coordinates, \cite{Diacu1}, \cite{Diacu001}. The combination of two main ideas helped us achieve this goal: the use of Weierstrass's hyperboloid model of hyperbolic geometry and the application of the variational approach of constrained Lagrangian dynamics, \cite{Gel}. Although we obtained the equations of motion for any dimension, we explored so far only the 2-dimensional case. In \cite{Diacu1} and \cite{Diacu001}, we studied relative equilibria and solved Saari's conjecture in the collinear case (see also \cite{Diacu2}, \cite{Diacu3}), in \cite{Diacu002} and \cite{Diacu1bis} we studied the singularities of the curved $n$-body problem, in \cite{Diacu4} we gave a complete classification of the homographic solutions in the 3-body case, and in \cite{DiacuPol} we obtained some results about polygonal  homographic orbits, including a generalization of the Perko-Walter-Elmabsout theorem,
\cite{Elma}, \cite{Perko}. Knowing the correct form of the equations of motion, Ernesto P\'erez-Chavela and J.\ Guadalupe Reyes Victoria succeeded to derive them in intrinsic coordinates in the case of positive curvature and showed them to be equivalent with the extrinsic equations obtained with the help of variational methods, \cite{Perez}. We are currently developing an intrinsic approach for negative curvature, whose analysis appears to be more complicated than the study of the positive curvature case, \cite{DiacuIntrinsic}.

\section{Results}

This paper is a first attempt to study the 3-dimensional curved $n$-body problem in the general context described in the previous section, with the help of the equations of motion written in extrinsic coordinates, as they were obtained in \cite{Diacu1} and \cite{Diacu001}. We are mainly concerned with understanding the motion of the simplest possible orbits, the relative equilibria, which move like rigid bodies, by maintaining constant mutual distances for all time.  

We structured this paper in 6 parts. This is Part 1, which  defines the problem, explains its importance, gives its historical background, describes the structure of the paper, and presents the main results. In Part 2 we set the geometric background and derive the equations of motion in the 3-dimensional case. In Part 3 we discuss the isometric rotations of the spaces of constant curvature in which the bodies move and, based on this development, define several natural classes of relative equilibria. We also analyze the occurrence of fixed points. In Part 4 we prove criteria for the existence of relative equilibria and a give qualitative description of how these orbits behave. In Part 5 we provide concrete examples of relative equilibria for all the classes of orbits previously discussed. Part 6 concludes the paper with a short analysis of the recent achievements obtained in this direction of research and a presentation of the future perspectives our results offer. 

Each part contains several sections, which are numbered increasingly throughout the paper. Sections 1, 2, 3, and 4 form Part 1. In Section 5, which starts Part 2, we lay the background for the subsequent developments that appear in this work. We thus introduce in 5.1 Weierstrass's model of hyperbolic geometry, outline in 5.2 its history, define in 5.3 some basic concepts of geometric topology, describe in 5.4 the metric in curved space, and introduce in 5.5 some functions that unify circular and hyperbolic trigonometry. In Section 6, we start with 6.1 in which we give the analytic expression of the potential, derive in 6.2 Euler's formula, present in 6.3 the theory of constrained Lagrangian dynamics and then use it in 6.4 to obtain the equations of motion of the $n$-body problem on 3-dimensional manifolds of constant Gaussian curvature, $\kappa\ne 0$, i.e.\  spheres, ${\mathbb S}_\kappa^3$, for $\kappa>0$, and hyperbolic manifolds, ${\mathbb H}_\kappa^3$, for $\kappa<0$. Then we prove in 6.5 that the equations of motion are Hamiltonian and show in 6.6, Proposition 1, that the 2-dimensional manifolds of the same curvature, ${\mathbb S}_\kappa^2$ and ${\mathbb H}_\kappa^2$, are invariant for these equations. In Section 7 we prove the existence and derive the expressions of the 7 basic integrals this Hamiltonian system has, namely we come up in 7.1 with 1 integral of energy and in 7.2 with the 6 integrals of the total angular momentum. Unlike in the Euclidean case, there are no integrals of the centre of mass and linear momentum. Although we don't aim to study the singularities of the equations of motion here, we describe them in Section 8, Proposition 2, in order to avoid singular configurations when dealing with relative equilibria.

Section 9, which starts Part 3, describes the isometric rotations of ${\mathbb S}_\kappa^3$ and ${\mathbb H}_\kappa^3$, a task that helps us understand the various types of transformations and how to use them to define natural classes of relative equilibria. It turns out that we can have: (1) simple elliptic rotations in ${\mathbb S}_\kappa^3$; (2) double elliptic rotations in ${\mathbb S}_\kappa^3$; (3) simple elliptic rotations in ${\mathbb H}_\kappa^3$; (4) simple hyperbolic rotations in ${\mathbb H}_\kappa^3$; (5) double elliptic-hyperbolic rotations in ${\mathbb H}_\kappa^3$; and (6) simple parabolic rotations in ${\mathbb H}_\kappa^3$. Section 10 continues this task by exploring when 2-dimensional spheres, in 10.1, and hypeboloids, in 10.2, of curvature $\kappa$ or different from $\kappa$ are preserved by the isometric rotations of ${\mathbb S}_\kappa^3$ and ${\mathbb H}_\kappa^3$, respectively.
Based on the results we obtain in these sections, we define 6 natural types of relative equilibria in Section 11: $\kappa$-positive elliptic, in 11.1; $\kappa$-positive elliptic-elliptic, in 11.2; $\kappa$-negative elliptic, in 11.3; $\kappa$-negative hyperbolic, in 11.4; $\kappa$-negative elliptic-hyperbolic, in 11.5; and $\kappa$-negative parabolic, in 11.6; then we summarize them all in 11.7. Since relative equilibria can also be generated from fixed-point configurations, we study fixed-point solutions in Section 12, which closes Part 3. It turns out that fixed-point solutions occur in ${\mathbb S}_\kappa^3$, as we prove in 12.1, but not in ${\mathbb H}_\kappa^3$ (Proposition 3 of 12.3) or hemispheres of ${\mathbb S}_\kappa^3$ (Proposition 4 of 12.3). Moreover, some of these fixed points are specific to ${\mathbb S}_\kappa^3$, such as when 5 equal masses lie at the vertices of a regular pentatope, because there is no 2-dimensional sphere on which this configuration occurs, as we show in 12.2.

Section 13 opens Part 4 of this paper with proving 7 criteria for the existence of relative equilibria as well as the nonexistence of $\kappa$-negative parabolic relative equilibria,
the latter in 13.6 (Proposition 5). Criteria 1 and 2 of 13.1 are for $\kappa$-positive elliptic relative equilibria, both in the general case and when they can be generated from fixed-point configurations, respectively. Criteria 3 and 4 of 13.2 repeat the same pattern, but for $\kappa$-positive elliptic-elliptic relative equilibria. Finally, Criteria 5, 6, and 7, proved in 13.3, 13.4, and 13.5, respectively, are for $\kappa$-negative elliptic, hyperbolic, and elliptic-hyperbolic relative equilibria, respectively. In Section 14 we prove the main results of this paper after laying some background of geometric topology. In 14.1 we introduce the concept of Clifford torus. In 14.2 we prove Theorem 1, which describes the general qualitative behaviour of relative equilibria in ${\mathbb S}_\kappa^3$, assuming that they exist, and shows that the bodies could move on circles or on Clifford tori. In 14.3 we prove Theorem 2, which becomes more specific about relative equilibria generated from fixed-point configurations; it shows that we could have orbits for which some points are fixed while others rotate on circles, as well as a large class of quasiperiodic orbits, which are the first kind of non-periodic relative equilibria ever hinted at in celestial mechanics. In Section 15, which closes Part 4, we study what happens in $\mathbb H_\kappa^3$. In 15.1 we introduce the concept of hyperbolic cylinder and in 15.2 we prove Theorem 3, which describes the possible motion of relative equilibria in ${\mathbb H}_\kappa^3$; it shows that the bodies could move only on circles, on geodesics, or on hyperbolic cylinders.

Since none of the above results show that such relative equilibria actually exist, it is necessary to construct concrete examples with the help of Criteria 1 to 7. This is the goal of Part 5, which starts with Section 16, in which we construct 4 classes of relative equilibria with a simple rotation in ${\mathbb S}_\kappa^3$. Example 16.1 provides a Lagrangian solution: the bodies, of equal masses, are at the vertices of an equilateral triangle) in the curved 3-body problem in ${\mathbb S}_\kappa^3$ and rotate on a non-necessarily geodesic circle of a great sphere, the frequency of rotation depending on the masses and on the circle's position. Example 16.2 generalizes the previous construction in the particular case of the curved 3-body problem when the bodies rotate on a great circle of a great sphere. Then, for any acute scalene triangle, we can find masses that lie at its vertices while the triangle rotates with any non-zero frequency. In Example 16.3, the concept of complementary great circles, meaning great circles lying in the planes $wx$ and $yz$, plays the essential role. We construct a relative equilibrium in the curved 6-body problem in ${\mathbb S}_\kappa^3$ for which 3 bodies of equal masses are fixed at the vertices of an equilateral triangle on a great circle of a great sphere, while the other 3 bodies, of the same masses as the previous 3, rotate at the vertices of an equilateral triangle along a complementary great circle of another great sphere. We show that these orbits cannot be contained on any 2-dimensional sphere. Example 16.4 generalizes the previous construction when the triangles are acute and scalene and the masses are not necessarily equal.

Section 17 provides examples of relative equilibria with double elliptic rotations in 
${\mathbb S}_\kappa^3$. In Example 17.1 an equilateral triangle with equal masses at its vertices has 2 elliptic rotations generated from a fixed-point configuration in the curved 3-body problem. Example 17.2 is in the curved 4-body problem: a regular tetrahedron with equal masses at its vertices has 2 elliptic rotations of equal frequencies. In Example 17.3 we construct a solution of the curved 5-body problem in which 5 equal masses are at the vertices of a regular pentatope that has two elliptic rotations of equal-size frequencies. As in the previous example, the orbit is generated from a fixed point, the frequencies have equal size, and the motions cannot take place on any 2-dimensional sphere. Example 17.4 is in the curved 6-body problem, for which 3 bodies of equal masses are at the vertices of an equilateral triangle that rotates on a great circle of a great sphere, while the other 3 bodies, of the same masses as the others, are at the vertices of another equilateral triangle that moves along a complementary great circle of another great sphere. The frequencies are distinct, in general, so, except for a negligible set of solutions that are periodic, the orbits are quasiperiodic. This seems to be the first class of solutions in $n$-body problems given by various potentials for which relative equilibria are not periodic orbits. Example 17.5 generalizes the previous construction to acute scalene triangles and non-equal masses. Sections 18, 19, and 20 provide, each, an example of a $\kappa$-negative elliptic relative equilibrium, a $\kappa$-negative hyperbolic relative equilibrium, and a $\kappa$-negative elliptic-hyperbolic relative equilibrium, respectively, in ${\mathbb H}_\kappa^3$. These 3 examples are in the curved 3-body problem. So Part 5 proves that all the criteria we proved in Part 4 can produce relative equilibria, which behave qualitatively exactly as Theorems 1, 2, and 3 predict.

Finally, Part 6 concludes the paper with a short discussion about the current research of solutions of the curved $n$-body problem and their stability (Section 21), and about future perspectives for relative equilibria in general, in 22.1, and their stability, in 22.2.

\newpage

\part{\rm BACKGROUND AND EQUATIONS OF MOTION}

The goal of this part of the paper is to lay the mathematical background for future developments and results and to obtain the equations of motion of the curved $n$-body problem together with their first integrals. We will also identify the singularities of the equations of motion in order to avoid impossible configurations for the relative equilibria we are going to construct in Part 5. 

\section{Preliminary developments}

In this section we introduce some concepts that will be needed for the derivation of the equations of motion of the $n$-body problem in spaces of constant curvature as well as for the study of the relative equilibria we aim to investigate in this paper.

The standard models of 2-dimensional hyperbolic geometry are the Poincar\'e disk and the Poincar\'e upper-half plane, which are conformal, i.e.\ maintain the angles existing in the hyperbolic plane, as well as the Klein-Beltrami disk, which is not conformal. But none of these models will be used in this paper. In the first part of the section, we introduce the less known Weierstrass model, which physicists usually call the Lorentz model, and we extend it to the 3-dimensional case. This model is more natural than the ones previously mentioned in the sense that it is dual to the sphere, and will thus be essential in our endeavours to develop a unified $n$-body problem in spaces of constant positive and negative curvature. We then provide a short history of Weierstrass's model, to justify its name, and introduce some geometry concepts that will be useful later. In the last part of this section, we define the metric that will be used in the model and unify circular and hyperbolic trigonometry, such that we can introduce a single potential for both the positive and the negative curvature case.

\subsection{The Weierstrass model of hyperbolic geometry}\label{Weierstrass}

Since the Weierstrass model of hyperbolic geometry is not widely known among nonlinear analysts or experts in differential equations and dynamical systems, we present it briefly here. We first discuss the 2-dimensional case, which we will then extend to 3 dimensions. In its 2-dimensional form, this model appeals for at least two reasons: it allows an obvious comparison with the 2-dimensional sphere, both from the geometric and from the algebraic point of view; it emphasizes  the difference between the hyperbolic (Bolyai-Lobachevsky) plane and the Euclidean plane as clearly as the well-known difference between the Euclidean plane and the sphere. From the dynamical point of view, the equations of motion of the curved $n$-body problem in ${\mathbb S}_\kappa^3$ resemble the equations of motion in ${\mathbb H}_\kappa^3$, with just a few changes of sign, as we will see later. The dynamical consequences will be, nevertheless, significant, but we will still be able to use the resemblances between the sphere and the hyperboloid in order to understand the dynamics of the problem.

The 2-dimensional Weierstrass model is built on one of the sheets of the hyperboloid of two sheets,
$$
x^2+y^2-z^2=\kappa^{-1},
$$
where $\kappa< 0$ represents the curvature of the surface in the 3-dimensional Minkowski space 
$\mathbb{\mathbb R}^{2,1}:=(\mathbb{R}^3, \boxdot)$, in which  
$$
{\bf a}\boxdot{\bf b}:=a_xb_x+a_yb_y-a_zb_z,
$$
with ${\bf a}=(a_x,a_y,a_z)$ and ${\bf b}=(b_x,b_y,b_z)$, defines the Lorentz inner product.
We choose for our model the $z>0$ sheet of the hyperboloid of two sheets, which we identify with the hyperbolic plane ${\mathbb H}_\kappa^2$. We can think of this surface as being a pseudosphere of imaginary radius $iR$, a case in which the relationship between radius and curvature is given by $(iR)^2=\kappa^{-1}$.

A linear transformation $T\colon{\mathbb R}^{2,1}\to{\mathbb R}^{2,1}$ is called orthogonal if 
$$
T({\bf a})\boxdot T({\bf a})={\bf a}\boxdot{\bf a}
$$ 
for any ${\bf a}\in{\mathbb R}^{2,1}$. The set of these transformations, together with the Lorentz inner product, forms the orthogonal group $O({\mathbb R}^{2,1})$, given by matrices of determinant $\pm1$.
Therefore the group $SO({\mathbb R}^{2,1})$ of orthogonal transformations of determinant 1 is a subgroup of $O({\mathbb R}^{2,1})$. Another subgroup
of $O({\mathbb R}^{2,1})$ is $G({\mathbb R}^{2,1})$, which is formed by the transformations $T$ that leave ${\mathbb H}_\kappa^2$ invariant. Furthermore, $G({\mathbb R}^{2,1})$ has the closed Lorentz subgroup, ${\rm Lor}({\mathbb R}^{2,1}):= G({\mathbb R}^{2,1}) \cap SO({\mathbb R}^{2,1})$. 
 
An important result, with consequences in our paper, is the Principal Axis Theorem for  ${\rm Lor}({\mathbb R}^{2,1})$, \cite{Dillen}, \cite{Nomizu}. Let us define the Lorentzian rotations about an axis as  the 1-parameter subgroups of  ${\rm Lor}({\mathbb R}^{2,1})$ that leave the axis pointwise fixed. Then the Principal Axis Theorem states that every Lorentzian transformation has one of the forms: 
$$
A=P\begin{bmatrix}
\cos\theta & -\sin\theta & 0 \\ 
\sin\theta & \cos\theta & 0 \\ 
0 & 0 & 1 
\end{bmatrix} P^{-1},
$$
$$B=P\begin{bmatrix}
1 & 0 & 0 \\ 
0 & \cosh s & \sinh s \\ 
0 & \sinh s & \cosh s 
\end{bmatrix}P^{-1}, 
$$  
or 
$$C=P\begin{bmatrix}
1 & -t & t \\ 
t & 1-t^2/2 & t^2/2 \\ 
t & -t^2/2 & 1+t^2/2 
\end{bmatrix}P^{-1}, 
$$  
where $\theta\in[0,2\pi)$, $s, t\in\mathbb{R}$, and  $P\in {\rm Lor}({\mathbb R}^{2,1})$. 
These transformations are called elliptic, hyperbolic, and parabolic, respectively. The elliptic transformations are rotations about a {\it timelike} axis---the $z$ axis in our case---and act along a circle, like in the spherical case; the hyperbolic rotations are about a {\it spacelike} axis---the $x$ axis in our context---and act along a hyperbola; and the parabolic transformations are rotations about a {\it lightlike} (or null) axis---represented here by the line  $x=0$, $y=z$---and act along a parabola. This result is the analogue of Euler's Principal Axis Theorem for the sphere, which states that any element of $SO(3)$ can be written, in some orthonormal basis, as a rotation about the $z$ axis.

The geodesics of ${\mathbb H}_\kappa^2$ are the hyperbolas obtained by intersecting the hyperboloid with planes passing through the origin of the coordinate system. For any two distinct points ${\bf a}$ and ${\bf b}$ of ${\mathbb H}_\kappa^2$, there is a unique geodesic that connects them, and the distance between these points is given by 
\begin{equation}
d({\bf a},{\bf b})=(-\kappa)^{-1/2}\cosh^{-1}(\kappa{\bf a}\boxdot{\bf b}).
\label{hyp-distance}
\end{equation}

In the framework of Weierstrass's model, the parallels' postulate of hyperbolic geometry can be translated as follows. Take a geodesic $\gamma$, i.e.~a hyperbola obtained by intersecting a plane through the origin, $O$, of the coordinate system with the upper sheet, $z>0$, of the hyperboloid. This hyperbola has two asymptotes in its plane: the straight lines $a$ and $b$, intersecting at $O$. Take a point, $P$, on the upper sheet of the hyperboloid but not on the chosen hyperbola. The plane $aP$ produces the geodesic hyperbola $\alpha$, whereas $bP$ produces $\beta$. These two hyperbolas intersect at $P$. Then $\alpha$ and $\gamma$ are parallel geodesics meeting at infinity along $a$, while $\beta$ and $\gamma$ are parallel geodesics meeting at infinity along $b$. All the hyperbolas between $\alpha$ and $\beta$ (also obtained from planes through $O$) are non-secant with $\gamma$.

Like the Euclidean plane, the abstract Bolyai-Lobachevsky plane has no privileged points or geodesics. But the Weierstrass model has some convenient points and geodesics, such as the point $(0,0,|\kappa|^{-1/2})$, namely the vertex of the sheet $z>0$ of the hyperboloid, and the geodesics passing through it. The elements of ${\rm Lor}({\mathbb R}^{2,1})$ allow us to move the geodesics of ${\mathbb H}_\kappa^2$ to convenient positions, a property that can be used to simplify certain arguments. 

More detailed introductions to the 2-dimensional Weierstrass model can be found in \cite{Fab} and \cite{Rey}. The Lorentz group is treated in some detail in \cite{Bak} and \cite{Rey}, but the Principal Axis Theorems for the Lorentz group contained in \cite{Bak} fails to include parabolic rotations, and is therefore incomplete.

The generalization of the 2-dimensional Weierstrass model to 3 dimensions is straightforward. We consider first the 4-dimensional Min\-kowski space ${\mathbb R}^{3,1}=({\mathbb R}^4,\boxdot)$, where $\boxdot$ is now defined as the Lorentz inner product
$$
{\bf a}\boxdot{\bf b}=a_wb_w+a_xb_x+a_yb_y-a_zb_z,
$$
with ${\bf a}=(a_w,a_x,a_y,a_z)$ and ${\bf b}=(b_w,b_x,b_y,b_z)$. In the Minkowski space we embed the $z>0$ connected component of the 3-dimensional hyperbolic manifold given by the equation
\begin{equation}
w^2+x^2+y^2-z^2=\kappa^{-1},
\label{hyp-manifold}
\end{equation}
which models the 3-dimensional hyperbolic space ${\mathbb H}_\kappa^3$ of constant curvature $\kappa<0$. The distance is given by the same formula \eqref{hyp-distance}, where $\bf a$ and $\bf b$ are now points in $\mathbb R^4$ that lie in the 3-dimensional hyperbolic manifold \eqref{hyp-manifold} with $z>0$.

The next issue to discuss would be that of Lorentzian transformations in ${\mathbb H}_\kappa^3$. But we postpone this subject, to present it in Section \ref{isometries} together with the isometries of the 3-dimensional sphere.

\subsection{History of the Weirstrass model}

The first mathematician who mentioned Karl Weierstrass in connection with the hyperboloid model of the hyperbolic plane was Wilhelm Killing. In a paper published in 1880, \cite{Kil1}, he used what he called Weierstrass's coordinates to describe the ``exterior hyperbolic plane'' as an ``ideal region'' of the Bolyai-Lobachevsky plane. In 1885, he added that Weierstrass had introduced these coordinates, in combination with ``numerous applications,''  during a seminar held in 1872,  \cite{Kil2}, pp.~258-259. We found no evidence of any written account of the hyperboloid model for the Bolyai-Lobachevsky plane prior to the one Killing gave in a paragraph of \cite{Kil2}, p.~260. His remarks might have inspired Richard Faber to name this model after Weierstrass and to dedicate a chapter to it in \cite{Fab}, pp.~247-278.

\subsection{More geometric background}

Since we are interested in the motion of
point particles in 3-dimensional manifolds, the natural framework for the study of the 3-dimensional curved $n$-body problem is the Euclidean ambient space, $\mathbb R^4$, endowed with a specific inner-product structure, which depends on whether the curvature is positive or negative. We therefore aim to continue to set here the problem's geometric background in $\mathbb R^4$.
For positive constant curvature, $\kappa>0$, the motion takes place in a 3-dimensional sphere embedded in the Euclidean space $\mathbb{R}^4$ endowed with the standard dot product, $\cdot$\ , i.e.\ on the manifold
$$
{\mathbb S}_\kappa^3=\{(w,x,y,z) | w^2+x^2+y^2+z^2=\kappa^{-1}\}.
$$
For negative constant curvature, $\kappa<0$, the motion takes place on the manifold introduced in the previous subsection, the upper connected component of a 3-dimensional hyperboloid of two connected components embedded in the Minkowski space ${\mathbb{R}}^{3,1}$, i.e.\ on the manifold
$$
{\mathbb H}_\kappa^3=\{(w,x,y,z) | w^2+x^2+y^2-z^2=\kappa^{-1}, z>0\},
$$
where ${\mathbb{R}}^{3,1}$ is $\mathbb R^4$ endowed with the Lorentz inner product,
$\boxdot$. Generically, we will denote these manifolds by
$$
{\mathbb M}^3_\kappa=\{(w,x,y,z)\in\mathbb{R}^4\ |\ w^2+x^2+y^2+\sigma z^2=\kappa^{-1}, \ {\rm with}\ z>0 \ {\rm for}\ \kappa<0\},
$$
where $\sigma$ is the signum function,
\begin{equation}
\sigma=
\begin{cases}
+1, \ \ {\rm for} \ \ \kappa>0\cr
-1, \ \ {\rm for} \ \ \kappa<0.\cr
\end{cases}\label{sigma}
\end{equation}
 Given the 4-dimensio\-nal vectors 
$${\bf a}=(a_w, a_x,a_y,a_z)\ \ {\rm and}\ \ {\bf b}=(b_w, b_x,b_y,b_z),$$ 
we define their inner product as
\begin{equation}
{\bf a}\odot{\bf b}:=a_wb_w+a_xb_x+a_yb_y+\sigma a_zb_z,
\label{dotpr}
\end{equation}
so ${\mathbb M}^3_\kappa$ is endowed with the operation $\odot$, meaning $\cdot$ for $\kappa>0$ and $\boxdot$ for $\kappa<0$.

If $R$ is the radius of the sphere ${\mathbb S}_\kappa^3$, then the relationship between $\kappa>0$ and $R$ is $\kappa^{-1}=R^2$. As we already mentioned, to have an analogue interpretation in the case of negative curvature, ${\mathbb H}_\kappa^3$ can be viewed as a 3-dimensional pseudosphere of imaginary radius $iR$, such that the relationship between $\kappa<0$ and $iR$ is $\kappa^{-1}=(iR)^2$.

Let us further define some concepts that will be useful later.

\begin{definition}
A great sphere of ${\mathbb S}_\kappa^3$ is a 2-dimensional sphere of the same
radius as ${\mathbb S}_\kappa^3$.
\label{greatsphere}
\end{definition}

Definition \ref{greatsphere} implies that the curvature of a great sphere is the same as the curvature of ${\mathbb S}_\kappa^3$. Great spheres of ${\mathbb S}_\kappa^3$ are obtained by intersecting ${\mathbb S}_\kappa^3$ with hyperplanes of ${\mathbb R}^4$ that pass through the origin of the coordinate
system. Examples of great spheres are:
\begin{equation}
{\bf S}_{\kappa,w}^2=\{(w,x,y,z)|\ \! x^2+y^2+z^2=\kappa^{-1},\ w=0\},
\label{s2w}
\end{equation}
\begin{equation}
{\bf S}_{\kappa,x}^2=\{(w,x,y,z)|\ \! w^2+y^2+z^2=\kappa^{-1},\ x=0\},
\end{equation}
\begin{equation}
{\bf S}_{\kappa,y}^2=\{(w,x,y,z)|\ \! w^2+x^2+z^2=\kappa^{-1},\ y=0\},
\end{equation}
\begin{equation}
{\bf S}_{\kappa,z}^2=\{(w,x,y,z)|\ \! w^2+x^2+y^2=\kappa^{-1},\ z=0\}.
\label{z-sphere}
\end{equation}

\begin{definition}
A great circle of a great sphere of ${\mathbb S}_\kappa^3$ is a circle (1-dimensional sphere) of the same radius as ${\mathbb S}_\kappa^3$.
\label{greatcircle}
\end{definition}

Definition \ref{greatcircle} implies that, since the curvature of ${\mathbb S}_\kappa^3$ is
$\kappa$, the curvature of a great circle is $\kappa=1/R$, where $R$ is
the radius of ${\mathbb S}_\kappa^3$ and of the great circle. Examples of great circles are:
\begin{equation}
{\bf S}_{\kappa,wx}^1=\{(w,x,y,z)|\ \! y^2+z^2=\kappa^{-1},\ w=x=0\},
\end{equation}
\begin{equation}
{\bf S}_{\kappa,yz}^1=\{(w,x,y,z)|\ \! w^2+x^2=\kappa^{-1},\ y=z=0\},
\end{equation}
\begin{equation}
{\bf S}_{\kappa,wy}^1=\{(w,x,y,z)|\ \! y^2+z^2=\kappa^{-1},\ w=y=0\},
\end{equation}
\begin{equation}
{\bf S}_{\kappa,xz}^1=\{(w,x,y,z)|\ \! w^2+x^2=\kappa^{-1},\ x=z=0\},
\end{equation}
\begin{equation}
{\bf S}_{\kappa,wz}^1=\{(w,x,y,z)|\ \! y^2+z^2=\kappa^{-1},\ w=z=0\},
\end{equation}
\begin{equation}
{\bf S}_{\kappa,xy}^1=\{(w,x,y,z)|\ \! y^2+z^2=\kappa^{-1},\ x=y=0\}.
\end{equation}
Notice that ${\bf S}_{\kappa,wx}^1$ is a great circle for both the great spheres ${\bf S}_{\kappa,w}^2$ and ${\bf S}_{\kappa,x}^2$, whereas ${\bf S}_{\kappa,yz}^1$
is a great circle for both the great spheres ${\bf S}_{\kappa,y}^2$ and ${\bf S}_{\kappa,z}^2$. Similar remarks can be made about any of the above pairs of great circles.

\begin{definition}
Two great circles, $C_1$ and $C_2$,  of two different great spheres of ${\mathbb S}_\kappa^3$ are called complementary if there is a coordinate system $wxyz$ such that
\begin{equation}
C_1={\bf S}_{\kappa,wx}^1\ \ {\rm and}\ \ C_2={\bf S}_{\kappa,yz}^1
\label{1st-rep}
\end{equation}
or
\begin{equation}
C_1={\bf S}_{\kappa,wy}^1\ \ {\rm and}\ \ C_2={\bf S}_{\kappa,xz}^1.
\label{2nd-rep}
\end{equation}
\label{complementary}
\end{definition}
\vspace{-1.2em}

The representations \eqref{1st-rep} and \eqref{2nd-rep} of $C_1$ and $C_2$ give, obviously, all the possibilities we have, because, for instance, the representation 
$$
C_1={\bf S}_{\kappa,wz}^1\ \ {\rm and}\ \ C_2={\bf S}_{\kappa,xy}^1,
$$ 
is the same as \eqref{1st-rep} after we perform a circular permutation of the coordinates $w,x,y,z$.
For simplicity, and without loss of generality, we will always use representation \eqref{1st-rep}.

The pair $C_1$ and $C_2$ of complementary circles of ${\mathbb S}^3$, for instance, forms, in topological terms, a Hopf link in a Hopf fibration, which is the map
$$
{\mathcal H}\colon{\mathbb S}^3\to{\mathbb S}^2, \ h(w,x,y,z)=(w^2+x^2-y^2-z^2,2(wz+xy),2(xz-wy))
$$
that takes circles of $\mathbb S^3$ to points of $\mathbb S^2$,
\cite{Hopf}, \cite{Lyons}. In particular, $\mathcal H$ takes ${\bf S}_{1,wx}^1$ to $(1,0,0)$ and ${\bf S}_{1,yz}^1$ to $(-1,0,0)$. Using the stereographic projection, it can be shown that the circles $C_1$ and $C_2$ are linked (like any adjacent rings in a chain), hence the name of the pair, \cite{Lyons}. Of course, these properties can be expressed in terms of any curvature $\kappa>0$. Hopf fibrations have important physical applications in fields such as rigid body mechanics, \cite{tudor}, quantum information theory, \cite{Mosseri}, and magnetic monopoles, \cite{Nakahara}. As we will see later, they are also useful in celestial mechanics via the curved $n$-body problem.

We will show in the next section that the distance between two points lying on complementary great circles is independent of their position. This remarkable geometric property turns out to be even more surprising from the dynamical point of view. Indeed, given the fact that the distance between 2 complementary great circles is constant, the magnitude of the gravitational interaction (but not the direction of the force) between a body lying on a great circle and a body lying on the complementary great circle is the same, no matter where the bodies are on their respective circles. This simple observation will help us construct some interesting, nonintuitive classes of solutions of the curved $n$-body problem.

In analogy with great spheres of ${\mathbb S}_\kappa^3$, we define great hyperboloids of ${\mathbb H}_\kappa^3$ as follows.

\begin{definition}
A great hyperboloid of ${\mathbb H}_\kappa^3$ is a 2-dimensional hyperboloid of the same
curvature as ${\mathbb H}_\kappa^3$.
\label{greathyperboloid}
\end{definition}

Great hyperboloids of ${\mathbb H}_\kappa^3$ are obtained by intersecting ${\mathbb H}_\kappa^3$ with hyperplanes of ${\mathbb R}^4$ that pass through the origin of the coordinate system. Examples of great hyperboloids are:
\begin{equation}
{\bf H}_{\kappa,w}^2=\{(w,x,y,z)|\ \! x^2+y^2-z^2=\kappa^{-1},\ w=0\},
\label{h2w}
\end{equation}
\begin{equation}
{\bf H}_{\kappa,x}^2=\{(w,x,y,z)|\ \! w^2+y^2-z^2=\kappa^{-1},\ x=0\},
\end{equation}
\begin{equation}
{\bf H}_{\kappa,y}^2=\{(w,x,y,z)|\ \! w^2+x^2-z^2=\kappa^{-1},\ y=0\}.
\label{great-Y}
\end{equation}

It is interesting to remark that great spheres and great hyperboloids are totally geodesic surfaces in ${\mathbb S}_\kappa^3$ and ${\mathbb H}_\kappa^3$, respectively. Recall that if through a given point of a Riemannian manifold (such as ${\mathbb S}_\kappa^3$ and ${\mathbb H}_\kappa^3$) we take the various geodesics of that manifold tangent at this point to the same plane element, we obtain a geodesic surface. A surface that is geodesic at each of its points is called totally geodesic.

\subsection{The metric}

An important preparatory issue lies with introducing the metric used on the manifolds ${\mathbb S}_\kappa^3$ and ${\mathbb H}_\kappa^3$, which, given the definition of the inner products, we naturally take as 
\begin{equation}
d_{\kappa}({\bf a},{\bf b}):=
\begin{cases}
\kappa^{-1/2}\cos^{-1}(\kappa{\bf a}\cdot{\bf b}),\ \ \ \ \ \ \ \ \  \kappa >0\cr
|{\bf a}-{\bf b}|, \ \ \ \ \ \ \ \ \ \ \ \ \ \ \ \ \ \ \ \ \ \ \  \! \ \hspace{2 pt} \kappa=0\cr
({-\kappa})^{-1/2}\cosh^{-1}(\kappa{\bf a}\boxdot{\bf b}),\hspace{4 pt} \kappa<0,\cr
\end{cases}
\label{distance}
\end{equation}
where the vertical bars denote the standard Euclidean norm.

When $\kappa\to 0$, with either $\kappa>0$ or $\kappa<0$, then $R\to\infty$, where $R$ represents the radius of the sphere ${\mathbb S}_\kappa^3$ or the real factor in the expression $iR$ of the imaginary radius of the pseudosphere ${\mathbb H}_\kappa^3$. As $R\to\infty$, both ${\mathbb S}_\kappa^3$ and ${\mathbb H}_\kappa^3$ become $\mathbb R^3$, and the vectors $\bf a$ and $\bf b$ become parallel, so the distance between them is given by the Euclidean distance, as indicated in \eqref{distance}. Therefore, in a way, $d$ is a continuous function of $\kappa$ when the manifolds ${\mathbb S}_\kappa^3$ and ${\mathbb H}_\kappa^3$ are pushed to infinity.

To get more insight into the fact that the metric in ${\mathbb S}_\kappa^3$ and ${\mathbb H}_\kappa^3$ becomes the Euclidean metric in $\mathbb R^3$ when $\kappa\to 0$, let us use the stereographic projection, which takes the points of coordinates $(w,x,y,z)\in{\mathbb M}_\kappa^3$, where
$$
{\mathbb M}^3_\kappa=\{(w,x,y,z)\in\mathbb{R}^4\ |\ w^2+x^2+y^2+\sigma z^2=\kappa^{-1}, \ {\rm with}\ z>0 \ {\rm for}\ \kappa<0\},
$$
to the points of coordinates $(W,X,Y)$ of the 3-dimensional hyperplane $z=0$ through the bijective transformation 
\begin{equation}
W=\frac{Rw}{R-\sigma z},\ \ X=\frac{Rx}{R-\sigma z},\ \ Y=\frac{Ry}{R-\sigma z},
\label{Gprojection}
\end{equation}
which has the inverse
$$
w=\frac{2R^2W}{R^2+\sigma W^2+\sigma X^2+\sigma Y^2}, \ \
x=\frac{2R^2X}{R^2+\sigma W^2+\sigma X^2+\sigma Y^2},
$$
$$
 y=\frac{2R^2Y}{R^2+\sigma W^2+\sigma X^2+\sigma Y^2},\ \ z=\frac{R(W^2+X^2+Y^2-\sigma R^2)}{R^2+\sigma W^2+\sigma X^2+\sigma Y^2}.
$$
From the geometric point of view, the correspondence between a point of ${\mathbb M}_\kappa^3$ and
a point of the hyperplane $z=0$ is made via a straight line through the point $(0,0,0,\sigma R)$, called the north pole, for both $\kappa>0$ and $\kappa<0$. 

For $\kappa>0$, the projection is the Euclidean space $\mathbb R^3$, whereas for $\kappa<0$ it is the 3-dimensional solid Poincar\'e ball of radius $\kappa^{-1/2}$. 
The metric in coordinates $(W,X,Y)$ is given by 
$$
ds^2=\frac{4R^4(dW^2+dX^2+dY^2)}{(R^2+\sigma W^2+\sigma X^2+\sigma Y^2)^2},
$$
which can be obtained by substituting the inverse of the stereographic projection into the metric
$$
ds^2=dw^2+dx^2+dy^2+\sigma dz^2.
$$
The stereographic projection is conformal, but it's neither isometric nor area preserving. Therefore we cannot expect to recover the exact Euclidean metric when $\kappa\to 0$, i.e.\ when $R\to\infty$, but hope, nevertheless, to obtain some expression that resembles it. Indeed, we can divide the numerator and denominator of the right hand side of the above metric by $R^4$ and write it after simplification as
$$
ds^2=\frac{4(dW^2+dX^2+dY^2)}{(1+\sigma W^2/R^2+\sigma X^2/R^2+\sigma Y^2/R^2)^2}.
$$
When $R\to\infty$, we have
$$
ds^2=4(dW^2+dX^2+dY^2),
$$
which is the Euclidean metric of $\mathbb R^3$ up to a constant factor.

\begin{remark}
From \eqref{distance} we can conclude that if, for $\kappa>0$, $C_1$ and $C_2$ are two complementary great circles, as described in Definition \ref{complementary}, and ${\bf a}\in C_1, {\bf b}\in C_2$, then the distance between
$\bf a$ and $\bf b$ is
$$
d_\kappa({\bf a},{\bf b})=\kappa^{-1/2}\pi/2.
$$
This fact shows that the distance between two complementary circles is constant.
\end{remark}

Since, to derive the equations of motion, we will apply a variational principle, we need to extend the distance from the 3-dimensional manifolds of constant curvature ${\mathbb S}_\kappa^3$ and ${\mathbb H}_\kappa^3$ to the 4-dimensional ambient space. We therefore redefine the distance between ${\bf a}$ and $\bf b$ as
\begin{equation}
d_{\kappa}({\bf a},{\bf b}):=
\begin{cases}
\kappa^{-1/2}\cos^{-1}{\kappa{\bf a}\cdot{\bf b}\over\sqrt{\kappa{\bf a}\cdot{\bf a}}
\sqrt{\kappa{\bf b}\cdot{\bf b}}},\ \ \ \ \ \ \ \ \! \ \ \kappa >0\cr
|{\bf a}-{\bf b}|, \ \ \ \ \ \ \ \ \ \ \ \ \ \ \ \ \ \ \ \ \ \ \! \ \hspace{0.82cm} \kappa=0\cr
({-\kappa})^{-1/2}\cosh^{-1}{\kappa{\bf a}\boxdot{\bf b}\over\sqrt{\kappa{\bf a}\boxdot{\bf a}}\sqrt{\kappa{\bf b}\boxdot{\bf b}}},\hspace{4.5 pt} \kappa<0.\cr
\end{cases}
\label{extendedmetric}
\end{equation}
Notice that on ${\mathbb S}_\kappa^3$ we have $\sqrt{{\kappa{\bf a}\cdot{\bf a}}}=
\sqrt{\kappa{\bf b}\cdot{\bf b}}=1$ and on ${\mathbb H}_\kappa^3$ we have
$\sqrt{\kappa{\bf a}\boxdot{\bf a}}=\sqrt{\kappa{\bf b}\boxdot{\bf b}}=1$, which means that the new distance reduces to the previously defined distance on the corresponding 3-dimensional manifolds of constant curvature.

\subsection{Unified trigonometry}

Following the work of Cari\~nena, Ra\~nada, and Santander, \cite{Car}, we will further define the trigonometric $\kappa$-functions, which unify circular and hyperbolic trigonometry. The reason for this step is to obtain the equations of motion of the curved $n$-body problem in both constant positive and constant negative curvature spaces. We define the 
$\kappa$-sine, ${\rm sn}_\kappa$, as
$$
{\rm sn}_{\kappa}(x):=\left\{
\begin{array}{rl}
{\kappa}^{-1/2}\sin{\kappa}^{1/2}x & {\rm if }\ \ \kappa>0\\
x & {\rm if }\ \ \kappa=0\\
({-\kappa})^{-{1/2}}\sinh({-\kappa})^{1/2}x & {\rm if }\ \ \kappa<0,
\end{array}  \right.
$$
the $\kappa$-cosine, ${\rm csn}_\kappa$, as
$$
{\rm csn}_{\kappa}(x):=\left\{
\begin{array}{rl}
\cos{\kappa}^{1/2}x & {\rm if }\ \ \kappa>0\\
1 & {\rm if }\ \ \kappa=0\\
\cosh({-\kappa})^{1/2}x & {\rm if }\ \ \kappa<0,
\end{array}  \right.
$$
as well as the $\kappa$-tangent, ${\rm tn}_\kappa$, and $\kappa$-cotangent,
${\rm ctn}_\kappa$, as 
$${\rm tn}_{\kappa}(x):={{\rm sn}_{\kappa}(x)\over {\rm csn}_{\kappa}(x)}\ \ \ {\rm and}\ \ \
{\rm ctn}_{\kappa}(x):={{\rm csn}_{\kappa}(x)\over {\rm sn}_{\kappa}(x)},$$
respectively. The entire trigonometry can be rewritten in this unified context,
but the only identity we will further need is the fundamental formula
\begin{equation}
{\kappa}\ {\rm sn}_{\kappa}^2(x)+{\rm csn}_{\kappa}^2(x)=1.
\label{fundamentalformula}
\end{equation}

Notice that all the above trigonometric $\kappa$-functions are continuous with respect to $\kappa$. In the above formulation of the unified trigonometric $\kappa$-functions, we assigned no particular meaning to the real parameter $\kappa$. In what follows, however, $\kappa$ will represent the constant curvature of a 3-dimensional manifold. 
Therefore, with this notation, the distance \eqref{distance} on the manifold ${\mathbb M}_\kappa^3$ can be written as
$$
d_\kappa({\bf a},{\bf b})=(\sigma\kappa)^{-1/2}{\rm csn}_\kappa^{-1}[(\sigma\kappa)^{1/2}{\bf a}\odot{\bf b}]
$$
for any ${\bf a},{\bf b}\in{\mathbb M}_\kappa^3$ and $\kappa\ne 0$.

\section{Equations of motion}\label{equationsofmotion}

The main purpose of this section is to derive the equations of motion of the curved $n$-body problem on the 3-dimensional manifolds ${\mathbb M}_\kappa^3$. To achieve this goal, we will introduce the curved potential function, which also represents the potential energy of the particle system, present and apply Euler's formula for homogeneous functions to the curved potential function, and introduce the variational method of constrained Lagrangian dynamics. After we derive the equations of motion of the curved $n$-body problem, we will show that they can be put in Hamiltonian form.

\subsection{The potential}

Since the classical Newtonian equations of the $n$-body problem are expressed in terms of a potential function, our next goal is to define a potential function that extends to spaces of constant curvature and reduces to the classical potential function in the Euclidean case, i.e.\ when $\kappa=0$.

Consider the point particles (also called point masses or bodies) of masses $m_1, m_2,\dots, m_n>0$ in $\mathbb{R}^4$, for $\kappa>0$,
and in $\mathbb{R}^{3,1}$, for $\kappa<0$, whose positions are given by the vectors
${\bf q}_i=(w_i,x_i,y_i,z_i),\ i=1,2,\dots,n$. Let ${\bf q}=({\bf q}_1, {\bf q}_2,\dots,{\bf q}_n)$ be the configuration of the system
and ${\bf p}=({\bf p}_1, {\bf p}_2,\dots,{\bf p}_n)$, with ${\bf p}_i=m_i\dot{\bf q}_i, \ i=1,2,\dots,n$, be the momentum of the system.
We define the gradient operator with respect to the vector ${\bf q}_i$ as
$$\nabla_{{\bf q}_i}:=(\partial_{w_i},\partial_{x_i},\partial_{y_i},\sigma\partial_{z_i}).$$

From now on we will rescale the units such that the gravitational constant $G$
is $1$. We thus define the potential of the curved $n$-body problem, which we will call the curved potential, as the function $-U_\kappa$, where
\begin{equation*}
U_\kappa({\bf q}):={1\over 2}\sum_{i=1}^n\sum_{j=1,j\ne i}^n{m_im_j{\rm ctn}_\kappa
({d}_\kappa({\bf q}_i,{\bf q}_j))}
\label{defpot}
\end{equation*}
stands for the curved force function, and ${\bf q}=({\bf q}_1,\dots, {\bf q}_n)$ is the configuration of the system.
Notice that, for $\kappa=0,$ we have ${\rm ctn}_0({d}_0({\bf q}_i,{\bf q}_j))=|{\bf q}_i-{\bf q}_j|^{-1}$, which means that the curved potential becomes the classical Newtonian potential in the Euclidean case. Moreover, $U_\kappa\to U_0$ as $\kappa\to 0$, whether through positive or negative values of $\kappa$. Nevertheless, we cannot claim that $U_\kappa$ is continuous with respect to $\kappa$ at $0$ in the usual way, but rather in a degenerate sense. Indeed, when $\kappa\to 0$, the manifold ${\mathbb M}_\kappa^3$ is pushed to infinity, so the above continuity with respect to $\kappa$ must be understood in this restricted way. In fact, as we will further see, the curved force function, $U_\kappa$, is homogeneous of degree $0$, whereas the Newtonian potential, $U_0$, defined in the Euclidean space, is a homogeneous function of degree $-1$.

Now that we defined a potential that satisfies the basic limit condition we required of any extension of the $n$-body problem beyond the Euclidean space, we emphasize that it also satisfies the basic properties the classical Newtonian potential fulfills in the case of the Kepler problem, as mentioned in the Introduction: it obeys Bertrand's property, according to which every bounded orbit is closed, and is a solution of the Laplace-Beltrami equation, \cite{Kozlov}, the natural generalization of Laplace's equation to Riemannian and pseudo-Riemannian manifolds. These properties ensure that the cotangent potential provides us with a natural extension of Newton's gravitational law to spaces of constant curvature.

Let us now focus on the case $\kappa\ne 0$. A straightforward computation, which uses the fundamental formula \eqref{fundamentalformula}, shows that
\begin{equation}
U_\kappa({\bf q})={1\over 2}\sum_{i=1}^n\sum_{j=1,j\ne i}^n{m_im_j
(\sigma\kappa)^{1/2}{\kappa{\bf q}_i\odot{\bf q}_j\over\sqrt{\kappa{\bf q}_i
\odot{\bf q}_i}\sqrt{\kappa{\bf q}_j\odot{\bf q}_j}}\over
\sqrt{\sigma-\sigma\Big({\kappa{\bf q}_i\odot{\bf q}_j\over
\sqrt{\kappa{\bf q}_i
\odot{\bf q}_i}\sqrt{\kappa{\bf q}_j\odot{\bf q}_j}}\Big)^2}}, \ \ \kappa\ne 0,
\label{pothom}
\end{equation}
an expression that is equivalent to
\begin{equation}
U_\kappa({\bf q})=\sum_{1\le i<j\le n}{m_im_j
|\kappa|^{1/2}{\kappa{\bf q}_i\odot{\bf q}_j}\over
[\sigma(\kappa{\bf q}_i
\odot{\bf q}_i)(\kappa{\bf q}_j\odot{\bf q}_j)-\sigma({\kappa{\bf q}_i\odot{\bf q}_j
})^2]^{1/2}},\ \kappa\ne 0.
\label{forcef}
\end{equation}
In fact, we could simplify $U_\kappa$ even more by recalling that $\kappa{\bf q}_i\odot{\bf q}_i=1,\ i=1,2,\dots, n$. But since we still need to compute $\nabla U_\kappa$, which means differentiating $U_\kappa$, we we will not make that simplification here.


\subsection{Euler's formula for homogeneous functions}

In 1755, Leonhard Euler proved a beautiful formula related to homogeneous functions, \cite{Euler}. We will further present it and show how it applies to the curved potential.

\begin{definition}
A function $F\colon {\mathbb R}^m\to\mathbb R$ is called homogeneous of degree $\alpha\in\mathbb R$ if for all $\eta\ne 0$ and ${\bf q}\in \mathbb R^m$, we have 
$$
F(\eta{\bf q})=\eta^\alpha F({\bf q}).
$$
\end{definition}
Euler's formula shows that, for any homogeneous function of degree $\alpha\in\mathbb R$, we have
$$
{\bf q}\cdot\nabla F({\bf q})=\alpha F({\bf q})
$$
for all ${\bf q}\in{\mathbb R}^m$.

Notice that $U_{\kappa}(\eta{\bf q})=U_\kappa({\bf q})=\eta^0U_{\kappa}({\bf q})$
for any $\eta\ne 0$, which means that the curved potential is a homogeneous function of degree zero. With our notations, we have $m=3n$, therefore Euler's formula
can be written as 
$$
{\bf q}\odot\nabla{F({\bf q})}=\alpha F({\bf q}).
$$
Since $\alpha=0$ for $U_{\kappa}$ with $\kappa\ne 0$, we conclude that
\begin{equation}
{\bf q}\odot\nabla U_{\kappa}({\bf q})=0.
\end{equation}
We can also write the curved force function as 
$U_\kappa({\bf q})={1\over 2}\sum_{i=1}^nU_\kappa^i({\bf q}_i)$, where
$$
U_\kappa^i({\bf q}_i):=
\sum_{j=1,j\ne i}^n{m_im_j(\sigma\kappa)^{1/2}{\kappa{\bf q}_i\odot{\bf q}_j\over\sqrt{\kappa{\bf q}_i\odot{\bf q}_i}\sqrt{\kappa{\bf q}_j\odot{\bf q}_j}}\over\sqrt{\sigma-\sigma\Big({\kappa{\bf q}_i\odot{\bf q}_j\over\sqrt{\kappa{\bf q}_i\odot{\bf q}_i}\sqrt{\kappa{\bf q}_j\odot{\bf q}_j}}\Big)^2}}, \ \ i=1,\dots,n,
$$ 
are also homogeneous functions of degree $0$. Applying Euler's formula for functions $F:\mathbb{R}^3\to\mathbb{R}$, we obtain that ${\bf q}_i\odot\nabla_{{\bf q}_i} U_\kappa^i({\bf q})=0$. Then using the identity $\nabla_{{\bf q}_i}U_\kappa({\bf q})=\nabla_{{\bf q}_i} U_\kappa^i({\bf q}_i)$, we can conclude that 
\begin{equation}
{\bf q}_i\odot\nabla_{{\bf q}_i} U_\kappa({\bf q})=0, \ \ i=1,\dots,n.
\label{eul}
\end{equation}


\subsection{Constrained Lagrangian dynamics}

To obtain the equations of motion of the curved $n$-body problem, we will use the classical variational theory of constrained Lagrangian dynamics, \cite{Gel}.
According to this theory, let 
$$L=T-V$$
be the Lagrangian of a system of $n$ particles constrained to move on a manifold, where $T$ is the kinetic energy and $V$  is the potential energy of the system. If the positions and the velocities of the particles are given by the vectors ${\bf q}_i, \dot{\bf q}_i,\ i=1,2,\dots,n$, and the constraints are characterized by the equations $f_i=0, \ i=1,2,\dots, n$, respectively, then the motion is described by the Euler-Lagrange equations with constraints,
\begin{equation}
{d\over dt}\Bigg({\partial L\over\partial\dot{\bf q}_i}\Bigg)-{\partial L\over\partial{\bf q}_i}-\lambda^i(t){\partial f_i\over\partial{\bf q}_i}={\bf 0},\ \ \ i=1,\dots,n,\label{EL}
\end{equation}
where $\lambda^i,\ i=1,2,\dots, n$, are the Lagrange multipliers. To obtain this formula we have to assume that the distance is defined in the entire ambient space. Using this classical result, we can now derive the equations of motion of the curved $n$-body problem.


\subsection{Derivation of the equations of motion}

In our case, the potential energy is $V=-U_\kappa$,
given by the curved force function (\ref{pothom}), and
we define the kinetic energy of the system of particles as
$$
T_\kappa({\bf q},\dot{\bf q}):={1\over 2}\sum_{i=1}^nm_i(\dot{\bf q}_i\odot\dot{\bf q}_i)(\kappa{\bf q}_i\odot{\bf q}_i).
$$
The reason for introducing the factors $\kappa{\bf q}_i\odot{\bf q}_i=1,\ i=1,2,\dots, n$, into the definition of the kinetic energy  will become clear in Subsection \ref{Hamil}. Then the Lagrangian of the curved $n$-body system has the form
$$
L_\kappa({\bf q}, \dot{\bf q})=T_\kappa({\bf q}, \dot{\bf q})+U_\kappa({\bf q}).
$$
So, according to the theory of constrained Lagrangian dynamics discussed above, which requires the use of a distance defined in the ambient space---a condition we satisfied when we produced formula \eqref{extendedmetric}, the equations of motion are
\begin{equation}
{d\over dt}\Bigg({\partial L_\kappa\over\partial\dot{\bf q}_i}\Bigg)-{\partial L_\kappa\over\partial{\bf q}_i}-\lambda_\kappa^i(t){\partial f_\kappa^i\over\partial{\bf q}_i}={\bf 0},\ \ \ i=1,\dots,n,\label{eqLagrangianS2}
\end{equation}
where $f_\kappa^i={\bf q}_i\odot{\bf q}_i-{\kappa}^{-1}$ is the function that characterizes the constraints $f_\kappa^i=0,\ i=1,2,\dots, n$. Each constraint keeps the body of mass $m_i$ on the surface of constant curvature $\kappa$, and $\lambda_\kappa^i$ is the Lagrange multiplier corresponding to the same body. Since ${\bf q}_i\odot{\bf q}_i=\kappa^{-1}$ implies that 
$\dot{\bf q}_i\odot{\bf q}_i=0$, it follows that
$$
{d\over dt}\Bigg({\partial L_\kappa\over\partial\dot{\bf q}_i}\Bigg)=
m_i\ddot{\bf q}_i(\kappa{\bf q}_i\odot{\bf q}_i)+2 m_i(\kappa\dot{\bf q}_i\odot{\bf q}_i)=
m_i\ddot{\bf q}_i.
$$
This relationship, together with
$$  {\partial L_\kappa\over\partial{\bf q}_i}=m_i\kappa(\dot{\bf q}_i\odot\dot{\bf q}_i){\bf q}_i+\nabla_{{\bf q}_i} U_{\kappa}({\bf q}),
$$
implies that equations (\ref{eqLagrangianS2}) are equivalent to
\begin{equation}
m_i\ddot{\bf q}_i-m_i\kappa(\dot{\bf q}_i\odot\dot{\bf q}_i){\bf q}_i-\nabla_{{\bf q}_i} U_{\kappa}({\bf q})-2\lambda_\kappa^i(t){\bf q}_i={\bf 0},\ \ \ i=1,\dots, n.\label{equations}
\end{equation}
To determine $\lambda_\kappa^i$, notice that
$0=\ddot{f}_\kappa^i=2\dot{\bf q}_i\odot\dot{\bf q}_i+
2({\bf q}_i\odot\ddot{\bf q}_i),$ so
\begin{equation}
{\bf q}_i\odot\ddot{\bf q}_i=-\dot{\bf q}_i\odot\dot{\bf q}_i.\label{h1}
\end{equation}
Let us also remark that $\odot$-multiplying equations (\ref{equations}) by 
${\bf q}_i$ and using Euler's formula (\ref{eul}), we obtain that
$$
m_i({\bf q}_i\odot\ddot{\bf q}_i)-m_i(\dot{\bf q}_i\odot\dot{\bf q}_i)-{\bf q}_i\odot\nabla_{{\bf q}_i} U_{\kappa}({\bf q})=2\lambda_\kappa^i{\bf q}_i\odot{\bf q}_i=2\kappa^{-1}\lambda_\kappa^i,
$$
which, via (\ref{h1}), implies that $\lambda_\kappa^i=-\kappa m_i(\dot{\bf q}_i\odot\dot{\bf q}_i)$.
Substituting these values of the Lagrange multipliers into equations (\ref{equations}),
the equations of motion and their constraints become
\begin{multline}
m_i \ddot {\bf q}_i=\nabla_{{\bf q}_i} U_{\kappa}({\bf q})-m_i\kappa(\dot{\bf q}_i\odot\dot{\bf q}_i){\bf q}_i, \ \ {\bf q}_i\odot{\bf q}_i=\kappa^{-1}, \ \ \kappa\ne 0,\\
 \ \ i=1,\dots, n.\label{eqmotion}
\end{multline}
The ${\bf q}_i$-gradient of the curved force function, obtained from (\ref{pothom}), has the
form
\begin{equation}
\nabla_{{\bf q}_i} U_\kappa({\bf q})=\sum_{j=1,j\ne i}^n
{{m_im_j(\sigma\kappa)^{1/2}\left(\sigma\kappa{\bf q}_j  -\sigma{\kappa^2{\bf q}_i\odot{\bf q}_j\over\kappa{\bf q}_i\odot{\bf q}_i}{\bf q}_i \right)\over\sqrt{\kappa{\bf q}_i\odot{\bf q}_i}\sqrt{\kappa{\bf q}_j\odot{\bf q}_j}}
\over
\left[\sigma-\sigma\left({\kappa{\bf q}_i\odot{\bf q}_j}\over{\sqrt{\kappa{\bf q}_i\odot{\bf q}_i}\sqrt{\kappa{\bf q}_j\odot{\bf q}_j}}\right)^2\right]^{3/2}},\ \kappa\ne 0,
\label{gra}
\end{equation}
which is equivalent to 
\begin{equation}
{\nabla}_{{\bf q}_i}U_\kappa({\bf q})=\sum_{\substack{j=1\\ j\ne i}}^n{m_im_j|\kappa|^{3/2}(\kappa{\bf q}_j\odot{\bf q}_j)[(\kappa{\bf q}_i\odot{\bf q}_i){\bf q}_j-(\kappa{\bf q}_i\odot{\bf q}_j){\bf q}_i]\over
[\sigma(\kappa{\bf q}_i
\odot{\bf q}_i)(\kappa{\bf q}_j\odot{\bf q}_j)-\sigma({\kappa{\bf q}_i\odot{\bf q}_j
})^2]^{3/2}}.
\label{gr}
\end{equation}
Using the fact that $\kappa{\bf q}_i\odot{\bf q}_i=1$, we can write this gradient as
\begin{equation}
\nabla_{{\bf q}_i}U_\kappa({\bf q})=\sum_{j=1,j\ne i}^n{{m_im_j}|
\kappa|^{3/2}
\left[{\bf q}_j  - (\kappa{\bf q}_i\odot{\bf q}_j){\bf q}_i \right]
\over
\left[\sigma-\sigma\left(\kappa{\bf q}_i\odot{\bf q}_j\right)^2\right]^{3/2}}, \ \kappa\ne 0.
\label{grad}
\end{equation}

Sometimes we can use the simpler form \eqref{grad} of the gradient, but whenever
we need to exploit the homogeneity of the gradient or have to differentiate it, we must
use its original form \eqref{gr}. 
Thus equations (\ref{eqmotion}) and (\ref{gr}) describe
the $n$-body problem on surfaces of constant curvature for $\kappa\ne0$. 
Though more complicated than the equations of motion Newton derived for 
the Euclidean space, system (\ref{eqmotion}) is simple enough to allow an
analytic approach. 


\subsection{Hamiltonian formulation}\label{Hamil}

It is always desirable to place any new problem into a more general theory. In our case, like in the classical $n$-body problem, the theory of Hamiltonian systems turns out to be the suitable framework.

The Hamiltonian function describing the motion of the $n$-body problem in spaces of constant curvature is
$$
H_\kappa({\bf q},{\bf p})=T_\kappa({\bf q},{\bf p})-U_\kappa({\bf q}).
$$ 
Then the Hamiltonian form of the equations of motion is given by the system with constraints
\begin{equation}
\begin{cases}
\dot{\bf q}_i=
\nabla_{{\bf p}_i} H_\kappa({\bf q},{\bf p})=m_i^{-1}{\bf p}_i,\cr
\dot{\bf p}_i=
-\nabla_{{\bf q}_i} H_\kappa({\bf q},{\bf p})=
\nabla_{{\bf q}_i} U_\kappa({\bf q})
-m_i^{-1}{\kappa}({\bf p}_i\odot{\bf p}_i){\bf q}_i,\cr
{\bf q}_i\odot{\bf q}_i={\kappa}^{-1}, 
\ \  {\bf q}_i\odot{\bf p}_i=0,
\ \ \kappa\ne 0,
\ \  i=1,2,\dots,n.\label{Ham}
\end{cases}
\end{equation}

The configuration space is the manifold $({\mathbb M}^3_\kappa)^n$, where, recall, ${\mathbb M}_\kappa^3$ represents ${\mathbb S}_\kappa^3$ or ${\mathbb H}_\kappa^3$. Then\ $({\bf q},{\bf p})\in {\bf T}^*({\mathbb M}_\kappa^3)^n$,  where ${\bf T}^*({\mathbb M}_\kappa^3)^n$ is the cotangent bundle, which represents the phase space. The constraints $\kappa{\bf q}_i\odot{\bf q}_i=1,\ {\bf q}_i\odot{\bf p}_i=0,\  i=1,2,\dots,n,$ which keep the bodies on the manifold and show that the position vectors and the momenta of each body are orthogonal to each other, reduce the $8n$-dimensional system \eqref{Ham} by  $2n$ dimensions. So, before taking into consideration the first integrals of motion, which we will compute in Section \ref{integralssection}, the phase space has dimension $6n$, as it should, given the fact that we are studying the motion of $n$ bodies in a 3-dimensional space.

\subsection{Invariance of great spheres and great hyperboloids}\label{invariance}

In the Euclidean case, planes are invariant sets for the equations of motion. In other
words, if the initial positions and momenta are contained in a plane, the motion takes place in that plane for all time for which the solution is defined. We can now prove the natural analogue of this result for the curved $n$-body problem. More precisely, we will show that, in ${\mathbb S}_\kappa^3$ and ${\mathbb H}_\kappa^3$, the motion can take place on 2-dimensional great spheres and 2-dimensional great hyperboloids, respectively, if we assign suitable initial positions and momenta.

\begin{proposition}
Let $n\ge 2$ and consider the point particles of masses $m_1,m_2,\dots, m_n>0$ in ${\mathbb M}_\kappa^3$. Assume that ${\mathbb M}_\kappa^2$ is any 2-dimensional submanifold of ${\mathbb M}_\kappa^3$ having the same curvature, i.e.\ a great sphere for $\kappa>0$ or a great hyperboloid for $\kappa<0$. Then, for any nonsingular initial conditions $({\bf q}(0),{\bf p}(0))\in({\mathbb M}_\kappa^2)^n\times (T({\mathbb M}_\kappa^2)^n$, where $\times$ denotes the cartesian product of two sets and $T({\mathbb M}_\kappa^2)$ is the tangent space of ${\mathbb M}_\kappa^2$, the motion takes place in ${\mathbb M}_\kappa^2$.
\label{invariance-prop}
\end{proposition}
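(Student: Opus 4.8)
The plan is to exploit the symmetry of the ambient space to reduce to a standard coordinate submanifold, and then to show that the single coordinate normal to that submanifold, being zero together with its velocity at $t=0$, stays identically zero along the solution. The whole argument rests on two facts established earlier: the right-hand side of \eqref{eqmotion} is analytic at nonsingular configurations, so the initial value problem has a unique solution; and the gradient of the force function has the explicit form \eqref{grad}--\eqref{gr}, which makes the decoupling of one coordinate transparent.

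First I would normalize the submanifold. A general great sphere is $\{\mathbf{q}\in\mathbb{S}_\kappa^3:\mathbf{n}\cdot\mathbf{q}=0\}$ for a unit normal $\mathbf{n}$, and a general great hyperboloid is cut out analogously by a spacelike normal; by an orthogonal (resp.\ Lorentzian) change of coordinates we may rotate $\mathbf{n}$ onto a coordinate axis. Since such a linear map preserves the inner product $\odot$, hence all the distances $d_\kappa(\mathbf{q}_i,\mathbf{q}_j)$, the force function $U_\kappa$, the kinetic term, and the constraints $\mathbf{q}_i\odot\mathbf{q}_i=\kappa^{-1}$, it carries solutions of \eqref{eqmotion} to solutions. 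Thus there is no loss of generality in assuming that $\mathbb{M}_\kappa^2$ is obtained by setting a single coordinate --- call it $\zeta$, where $\zeta=z$ for $\kappa>0$ and $\zeta=y$ for $\kappa<0$ --- equal to zero. In the hyperbolic case $\zeta$ is chosen \emph{spacelike}, so that the timelike direction $z$ is retained in $\mathbb{M}_\kappa^2$ and so that the signature twist in $\nabla_{\mathbf{q}_i}=(\partial_{w_i},\partial_{x_i},\partial_{y_i},\sigma\partial_{z_i})$ plays no role in the $\zeta$-slot.

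Next I would inspect the $\zeta$-component of \eqref{eqmotion} along the solution issuing from the given initial data. Writing $\zeta_i(t)$ for the $\zeta$-coordinate of $\mathbf{q}_i(t)$, formula \eqref{grad} shows that the $\zeta$-component of $\nabla_{\mathbf{q}_i}U_\kappa$ is a sum of terms each carrying the factor $\zeta_j-(\kappa\mathbf{q}_i\odot\mathbf{q}_j)\zeta_i$, while the centripetal term $-m_i\kappa(\dot{\mathbf{q}}_i\odot\dot{\mathbf{q}}_i)\mathbf{q}_i$ contributes the factor $\zeta_i$. Consequently $(\zeta_1,\dots,\zeta_n)$ satisfies a second-order system
\begin{equation*}
\ddot\zeta_i(t)=\sum_{j=1}^{n}A_{ij}(t)\,\zeta_j(t),\qquad i=1,\dots,n,
\end{equation*}
that is \emph{linear and homogeneous} in the $\zeta_j$, with coefficients $A_{ij}(t)$ continuous along the (nonsingular) solution. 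Since the hypothesis $(\mathbf{q}(0),\mathbf{p}(0))\in(\mathbb{M}_\kappa^2)^n\times T(\mathbb{M}_\kappa^2)^n$ gives $\zeta_i(0)=0$ and $\dot\zeta_i(0)=0$, uniqueness for linear homogeneous systems forces $\zeta_i(t)\equiv0$ on the whole interval of existence, which is exactly the assertion that the motion remains in $\mathbb{M}_\kappa^2$. An equivalent, perhaps cleaner, route would be to take the solution of the genuinely $2$-dimensional curved $n$-body problem on $\mathbb{M}_\kappa^2$, embed it in $\mathbb{R}^4$ with $\zeta\equiv0$, check that it solves \eqref{eqmotion} (using that $U_\kappa$ restricted to $(\mathbb{M}_\kappa^2)^n$ equals the two-dimensional force function and that the ambient gradient has vanishing $\zeta$-component there), and then invoke uniqueness of the $3$-dimensional problem to identify the two.

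The step requiring the most care is the verification that the $\zeta$-equation is genuinely homogeneous in $\zeta$, i.e.\ that \emph{every} surviving term in the $\zeta$-component of the force carries a factor vanishing when all $\zeta_j=0$; this is precisely where the algebraic structure of \eqref{grad} is essential, and where the choice of a spacelike normal coordinate in the hyperbolic case prevents the $\sigma$ in $\nabla_{\mathbf{q}_i}$ from interfering. A secondary point to make precise is that the coefficients $A_{ij}(t)$ stay continuous, which follows from the nonsingularity assumption, guaranteeing that the denominators in \eqref{grad} remain bounded away from zero on the interval of existence.
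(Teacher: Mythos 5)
Your proof is correct and follows essentially the same route as the paper: reduce by an isometry to a coordinate hyperplane, observe from \eqref{grad} that the normal coordinates satisfy a linear homogeneous system with continuous coefficients along the (nonsingular) solution, and conclude from zero initial data and uniqueness that they vanish identically. The only cosmetic differences are that the paper works with the first-order Hamiltonian form in the variables $w_i,r_i$ (using the coordinate $w$, which is spacelike in both signatures) rather than the second-order form, and it does not spell out the alternative embedding-plus-uniqueness argument you mention.
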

\begin{proof}
Without loss of generality, it is enough to prove the result for ${\bf M}_{\kappa,w}^2$,
where  
$$
{\bf M}_{\kappa,w}^2:=\{(w,x,y,z)\ \! |\ \! x^2+y^2+\sigma z^2=\kappa^{-1},\ w=0\}
$$
is the great 2-dimensional sphere ${\bf S}_{\kappa,w}^2$, for $\kappa>0$, and the great 2-dimensional hyperboloid ${\bf H}_{\kappa,w}^2$, for $\kappa<0$, both of which we defined in Subsection \ref{invariance} as \eqref{s2w} and \eqref{h2w}, respectively. 
Indeed, we can obviusly restrict to this case since any great sphere or great hyperboloid can be reduced to it by a suitable change of coordinates. 

Let us denote the coordinates and the momenta of the bodies $m_i,\ i=1,2,\dots,n$, by 
$$
{\bf q}_i=(w_i,x_i,y_i,z_i)\ \ {\rm and}\ \ {\bf p}_i=(r_i,s_i,u_i,v_i), \ i=1,2,\dots,n,
$$
which when restricted to ${\bf M}_{\kappa,w}^2$ and $T({\bf M}_{\kappa,w}^2)$, respectively, become
$$
{\bf q}_i=(0,x_i,y_i,z_i)\ \ {\rm and}\ \ {\bf p}_i=(0,s_i,u_i,v_i), \ i=1,2,\dots,n.
$$

Relative to the first component, $w$, the equations of motion \eqref{Ham} have the form
$$
\begin{cases}
\dot{w}_i=m_i^{-1}{r}_i,\cr
\dot{r}_i=\sum_{j=1,j\ne i}^n{{m_im_j}|\kappa|^{3/2}
\left[{w}_j  - (\kappa{\bf q}_i\odot{\bf q}_j){w}_i \right]
\over
\left[\sigma-\sigma\left(\kappa{\bf q}_i\odot{\bf q}_j\right)^2\right]^{3/2}}
-m_i^{-1}{\kappa}({\bf p}_i\odot{\bf p}_i){w}_i,\cr
{\bf q}_i\odot{\bf q}_i={\kappa}^{-1}, 
\ \  {\bf q}_i\odot{\bf p}_i=0,
\ \ \kappa\ne 0,
\ \  i=1,2,\dots,n.
\end{cases}
$$
For our purposes, we can view this first-order system of differential equations as linear in the variables $w_i, r_i,\ i=1,2,\dots,n$. But on ${\bf M}_{\kappa,w}^2$, the initial conditions are $w_i(0)=r_i(0)=0,\ i=1,2,\dots,n$, therefore $w_i(t)=r_i(t)=0,\ i=1,2,\dots,n$, for all $t$ for which the corresponding solutions are defined. Consequently, for initial conditions $({\bf q}(0),{\bf p}(0))\in{\bf M}_\kappa^2\times T({\bf M}_\kappa^2)$, the motion is confined to ${\bf M}_{\kappa,w}^2$, a remark that completes the proof.
\end{proof}

\section{First integrals}\label{integralssection}

In this section we will determine the first integrals of the equations of motion. These integrals lie at the foundation of the reduction method, which played an important role in the theory of differential equations ever since mathematicians discovered the existence of functions that remain constant along solutions. The classical $n$-body problem in $\mathbb R^3$ has 10 first integrals that are algebraic with respect to ${\bf q}$ and ${\bf p}$, known already to Lagrange in the mid 18th century, \cite{Win}. In 1887, Heinrich Bruns proved that there are no other first integrals, algebraic with respect to ${\bf q}$ and $\bf p$, \cite{Bruns}.  Let us now find the first integrals of the curved $n$-body problem. 

\subsection{The integral of energy}

The Hamiltonian function provides the integral of energy,
\begin{equation}
H_\kappa({\bf q},{\bf p})=h,
\label{energy}
\end{equation}
where $h$ is the energy constant. Indeed, $\odot$-multiplying equations (\ref{eqmotion}) by $\dot{\bf q}_i$, we obtain 
$$
\sum_{i=1}^nm_i\ddot{\bf q}_i\odot\dot{\bf q}_i=
\sum_{i=1}^n[\nabla_{{\bf q}_i} U_\kappa({\bf q})]\odot\dot{\bf q}_i-
\sum_{i=1}^n{m_i\kappa}(\dot{\bf q}_i\odot\dot{\bf q}_i){\bf q}_i\odot{\dot{\bf q}_i}=
{d\over dt}U_\kappa({\bf q}(t)).
$$
Then equation (\ref{energy}) follows by integrating the first and last term in the above 
equation. 

Unlike in the classical Euclidean case, there are no integrals of the centre of mass and the linear momentum. This is not surprising, giving the fact that $n$-body problems obtained by discretizing Einstein's field equations lack these integrals too, \cite{Ein}, \cite{Fock}, \cite{Civ}, \cite{Civita}. Their absence, however, complicates the study of our problem since many of the standard methods don't apply anymore.

One could, of course, define some artificial centre of mass for the particle system, but this move would be to no avail. Indeed, forces do not cancel each other at such a point, as it happens in the Euclidean case, so no advantage can be gained from introducing this concept.

\subsection{The integrals of the total angular momentum}

As we will show below, equations \eqref{Ham} have six angular momentum integrals. To prove their existence, we need to introduce the concept of bivector, which generalizes the idea of vector. If a scalar has dimension 0, and a vector has dimension 1, then a bivector has dimension 2. A bivector is constructed with the help of the wedge product ${\bf a}\wedge{\bf b}$, defined below, of two vectors $\bf a$ and $\bf b$. Its magnitude can be intuitively understood as the oriented area of the parallelogram with edges $\bf a$
and $\bf b$. The wedge product lies in a vector space different from that of the vectors it is generated from. The space of bivectors together with the wedge product is called a Grassmann algebra.

To make these concepts precise, let 
$$
{\bf e}_w=(1,0,0,0),\ {\bf e}_x=(0,1,0,0),\ {\bf e}_y=(0,0,1,0),\ {\bf e}_z=(0,0,0,1)
$$ 
denote the elements of the canonical basis of $\mathbb R^4$, and consider the vectors ${\bf u}=(u_w,u_x,u_y,u_z)$ and ${\bf v}=(v_w,v_x,v_y,v_z)$. We define the wedge product (also called outer product or exterior product) of $\bf u$ and $\bf v$ of $\mathbb R^4$ as
$$
{\bf u}\wedge {\bf v}:=(u_wv_x-u_xv_w)e_w\wedge e_x+(u_wv_y-u_yv_w)e_w\wedge e_y+
$$
\vspace{-22pt}
\begin{equation}
\ \ \ \ \ \ \ \ \ \  \ \!(u_wv_z-u_zv_w)e_w\wedge e_z+(u_xv_y-u_yv_x)e_x\wedge e_y+
\end{equation}
\vspace{-21pt}
$$\ \ \ \ \ \ \  \ \!   (u_xv_z-u_zv_x)e_x\wedge e_z+(u_yv_z-u_zv_y)e_y\wedge e_z,$$
where ${\bf e}_w\wedge {\bf e}_x, {\bf e}_w\wedge {\bf e}_y, {\bf e}_w\wedge {\bf e}_z,
{\bf e}_x\wedge {\bf e}_y,  {\bf e}_x\wedge {\bf e}_z, {\bf e}_y\wedge {\bf e}_z$ represent the bivectors that form a canonical basis of the exterior Grassmann algebra over $\mathbb R^4$ (for more details, see, e.g., \cite{Doran}).
In ${\mathbb R}^3$, the exterior product is equivalent with the cross product. 

Let us define $\sum_{i=1}^nm_i{\bf q}_i\wedge\dot
{\bf q}_i$ to be 
the total angular momentum of the particles of masses $m_1, m_2,\dots, m_n>0$ in 
${\mathbb R}^4$. We will further show that the total angular momentum is conserved for the equations of motion, i.e.
\begin{equation}
\sum_{i=1}^nm_i{\bf q}_i\wedge\dot{\bf q}_i={\bf c},
\label{angintegrals}
\end{equation}
where
${\bf c}=c_{wx}{\bf e}_w\wedge {\bf e}_x+c_{wy}{\bf e}_w\wedge {\bf e}_y+c_{wz}{\bf e}_w\wedge {\bf e}_z+
c_{xy}{\bf e}_x\wedge {\bf e}_y+c_{xz}{\bf e}_x\wedge {\bf e}_z+c_{yz}{\bf e}_y\wedge {\bf e}_z,$ with the coefficients 
$c_{wx}, c_{wy}, c_{wz}, c_{xy}, c_{xz}, c_{yz}\in{\mathbb R}$.
Indeed, relations (\ref{angintegrals}) follow by integrating the identity formed by the first
and last term of the equations
\begin{multline}
\sum_{i=1}^nm_i\ddot{\bf q}_i\wedge{\bf q}_i=\sum_{i=1}^n\sum_{j=1,j\ne i}^n{m_im_j|\kappa|^{3/2}{\bf q}_i\wedge{\bf q}_j
\over
[\sigma-\sigma(\kappa{\bf q}_i\odot{\bf q}_j)^2]^{3/2}}\\
-\sum_{i=1}^n\left[\sum_{j=1,j\ne i}^n{m_im_j|\kappa|^{3/2}(\kappa{\bf q}_i\odot{\bf q}_j)
\over
[\sigma-\sigma(\kappa{\bf q}_i\odot{\bf q}_j)^2]^{3/2}}-
m_i{\kappa}(\dot{\bf q}_i\odot\dot{\bf q}_i)\right]{\bf q}_i\wedge{\bf q}_i
={\bf 0},
\end{multline}
obtained after $\wedge$-multiplying the equations of motion (\ref{eqmotion}) 
by ${\bf q}_i$. The last of the above identities follows from the skew-symmetry of the $\wedge$ operation and the fact that  ${\bf q}_i\wedge{\bf q}_i={\bf 0}, \ i=1,\dots,n$.

On components, the 6 integrals in \eqref{angintegrals} can be written as
\begin{align}
\sum_{i=1}^nm_i(w_i\dot{x}_i-\dot{w}_ix_i)&=c_{wx}, & \sum_{i=1}^nm_i(w_i\dot{y}_i-\dot{w}_iy_i)&=c_{wy},\\
\sum_{i=1}^nm_i(w_i\dot{z}_i-\dot{w}_iz_i)&=c_{wz},& \sum_{i=1}^nm_i(x_i\dot{y}_i-\dot{x}_iy_i)&=c_{xy},\\
\sum_{i=1}^nm_i(x_i\dot{z}_i-\dot{x}_iz_i)&=c_{xz},&  \sum_{i=1}^nm_i(y_i\dot{z}_i-\dot{y}_iz_i)&=c_{yz}.
\label{angularmomentum}
\end{align}
The physical interpretation of these six integrals is related to the geometry of $\mathbb R^4$. The coordinate axes $Ow, Ox, Oy,$ and $Oz$ determine six orthogonal planes, $wx, wy, wz, xy, xz,$ and $yz$. We call them basis planes, since they correspond to the exterior products ${\bf e}_w\wedge{\bf e}_x$, ${\bf e}_w\wedge{\bf e}_y$, ${\bf e}_w\wedge{\bf e}_z$, ${\bf e}_x\wedge{\bf e}_y$, ${\bf e}_x\wedge{\bf e}_z$, and ${\bf e}_y\wedge{\bf e}_z$, respectively, of the basis vectors ${\bf e}_w, {\bf e}_x, {\bf e}_y, {\bf e}_z$ of $\mathbb R^4$. Then the constants $c_{wx}, c_{wy}, c_{wz}, c_{xy}, c_{xz}, c_{yz}$ measure the rotation of an orbit relative to a point in the plane their indices define. This point is the same for all 6 basis planes, namely the origin of the coordinate system.

To clarify this interpretation of rotations in $\mathbb R^4$, let us point out that, in $\mathbb R^3$, rotation is understood as a motion around a pointwise invariant axis orthogonal to a basis plane, which the rotation leaves globally (not pointwise) invariant. In $\mathbb R^4$, there are infinitely many axes orthogonal to this plane, and the angular momentum is the same for them all, since each equation of \eqref{angularmomentum} depends only on the 2 coordinates of the plane and the corresponding velocities. It is, therefore, more convenient to think of these rotations in $\mathbb R^4$ as taking place around a point in a plane, in spite of the fact that the rotation moves points outside the plane too.

Whatever sense of rotation a scalar constant of the angular momentum determines, the opposite sign indicates the opposite sense. A zero scalar constant means that there is no rotation relative to the origin in that particular plane.

Using the constraints $\kappa{\bf q}_i\odot{\bf q}_i=1,\ i=1,2,\dots,n$, we can write system
\eqref{Ham} as
\begin{equation}
\ddot{\bf q}_i=\sum_{\substack{j=1\\ j\ne i}}^n{m_j|\kappa|^{3/2}[{\bf q}_j-(\kappa{\bf q}_i\odot{\bf q}_j){\bf q}_i]\over
[\sigma-\sigma({\kappa{\bf q}_i\odot{\bf q}_j
})^2]^{3/2}}-(\kappa\dot{\bf q}_i\odot\dot{\bf q}_i){\bf q}_i, \ i=1,2,\dots,n,
\label{second}
\end{equation}
which is the form of the equations of motion we will mostly use in this paper. The sums on the right hand side of the above equations represent the $i$th gradient of the potential after the constraints are taken into account. 

If we regard these equations as a first order system with constraints, 
\begin{equation}
\begin{cases}
\dot{\bf q}_i=m_i^{-1}{\bf p}_i,\cr
\dot{\bf p}_i=\sum_{\substack{j=1\\ j\ne i}}^n{m_j|\kappa|^{3/2}[{\bf q}_j-(\kappa{\bf q}_i\odot{\bf q}_j){\bf q}_i]\over
[\sigma-\sigma({\kappa{\bf q}_i\odot{\bf q}_j
})^2]^{3/2}}-(\kappa\dot{\bf q}_i\odot\dot{\bf q}_i){\bf q}_i,\cr
{\bf q}_i\odot{\bf q}_i=1, \ {\bf q}_i\odot{\bf p}_i=0, \ i=1,2,\dots,n,\cr
\end{cases}
\end{equation}
the dimension of the phase space, after taking into account the integrals of motion described above, is $6n-7$.

\section{Singularities}

Before we begin the study of relative equilibria, it is important to know whether there exist impossible configurations of the bodies. The answer is positive, and these configurations occur when system \eqref{second} encounters singularities, i.e.\ if at least one denominator in the sum on the right hand sides of the equations occurring in system \eqref{second} vanishes. In other words, a configuration is singular when $({\kappa{\bf q}_i\odot{\bf q}_j})^2=1,\ i,j=1,2,\dots,n,\ i\ne j$, which is the same as saying that ${\kappa{\bf q}_i\odot{\bf q}_j}=1$ or ${\kappa{\bf q}_i\odot{\bf q}_j}=-1,\  i,j=1,2,\dots,n,\ i\ne j$. The following result shows that the former case corresponds to collisions, i.e.\ to configurations for which at least two bodies have identical coordinates, whereas the latter case occurs in ${\mathbb S}_\kappa^3$, but not in ${\mathbb H}_\kappa^3$, and corresponds to antipodal configurations, i.e.\ when at least two bodies have coordinates of opposite signs. These are impossible initial configurations all the solutions we will introduce in this paper must avoid.

\begin{proposition}{\bf (Collision and antipodal configurations)} 
Consider the $3$-dimen\-sional curved $n$-body problem, $n\ge 2$, with masses $m_1,m_2,\dots,m_n>0$. Then, in $\mathbb S_\kappa^3$, if there are $i,j\in\{1,2,\dots,n\},\ i\ne j$, such that $\kappa{\bf q}_i\cdot{\bf q}_j=1$, the bodies $m_i$ and $m_j$ form a collision configuration, and if $\kappa{\bf q}_i\odot{\bf q}_j=-1$, they form an antipodal configuration. In $\mathbb H_\kappa^3$, if there are $i,j\in\{1,2,\dots,n\},\ i\ne j$, such that $\kappa{\bf q}_i\boxdot{\bf q}_j=1$, the bodies $m_i$ and $m_j$ form a collision configuration, whereas configurations with $\kappa{\bf q}_i\boxdot{\bf q}_j=-1$ don't exist. 
\end{proposition}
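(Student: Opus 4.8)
The plan is to reduce both the spherical and the hyperbolic statements to a Cauchy--Schwarz-type inequality together with its equality condition, exploiting throughout the constraint $\kappa{\bf q}_i\odot{\bf q}_i=1$, which says that every position vector has $\odot$-norm squared equal to $\kappa^{-1}$. In each case the quantity $\kappa{\bf q}_i\odot{\bf q}_j$ will be shown to be pinned to the boundary of an allowed range precisely when ${\bf q}_i$ and ${\bf q}_j$ are linearly dependent, and the sign of $\kappa{\bf q}_i\odot{\bf q}_j$ will then decide between a collision and an antipodal configuration.

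For $\kappa>0$ the relevant form is the Euclidean dot product on ${\mathbb R}^4$, so $|{\bf q}_i|=|{\bf q}_j|=\kappa^{-1/2}$. I would invoke the ordinary Cauchy--Schwarz inequality to get $|{\bf q}_i\cdot{\bf q}_j|\le\kappa^{-1}$, hence $|\kappa{\bf q}_i\cdot{\bf q}_j|\le 1$, with equality if and only if ${\bf q}_j=\lambda{\bf q}_i$ for some real $\lambda$. Since both vectors have the same norm, $\lambda=\pm1$. If $\kappa{\bf q}_i\cdot{\bf q}_j=1$ then $\lambda=1$, so ${\bf q}_i={\bf q}_j$, a collision; if $\kappa{\bf q}_i\cdot{\bf q}_j=-1$ then $\lambda=-1$, so ${\bf q}_j=-{\bf q}_i$, an antipodal configuration. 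This settles ${\mathbb S}_\kappa^3$.

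For $\kappa<0$ the form is the Lorentz product $\boxdot$, and the constraint reads ${\bf q}_i\boxdot{\bf q}_i=\kappa^{-1}<0$, so each ${\bf q}_i$ is timelike, and the condition $z_i>0$ makes it future-pointing. Two facts about future-pointing timelike vectors will drive the argument. First, I would establish a reverse Cauchy--Schwarz inequality: writing ${\bf q}_j=\alpha{\bf q}_i+{\bf r}$ with ${\bf r}\boxdot{\bf q}_i=0$, the $\boxdot$-orthogonal complement of the timelike vector ${\bf q}_i$ is spacelike, so ${\bf r}\boxdot{\bf r}\ge0$, and a short computation gives $({\bf q}_i\boxdot{\bf q}_j)^2-({\bf q}_i\boxdot{\bf q}_i)({\bf q}_j\boxdot{\bf q}_j)=-({\bf q}_i\boxdot{\bf q}_i)({\bf r}\boxdot{\bf r})\ge0$, i.e.\ $({\bf q}_i\boxdot{\bf q}_j)^2\ge\kappa^{-2}$, with equality exactly when ${\bf r}={\bf 0}$, that is when ${\bf q}_i$ and ${\bf q}_j$ are linearly dependent. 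Second, I would apply Cauchy--Schwarz to the spatial parts together with $z_i>\sqrt{w_i^2+x_i^2+y_i^2}$ and $z_j>\sqrt{w_j^2+x_j^2+y_j^2}$ to deduce ${\bf q}_i\boxdot{\bf q}_j<0$. Combining these, $\kappa{\bf q}_i\boxdot{\bf q}_j\ge1$, so the value $-1$ cannot occur, while $\kappa{\bf q}_i\boxdot{\bf q}_j=1$ forces linear dependence; equal norms and $z_i,z_j>0$ then give $\alpha=1$, hence ${\bf q}_i={\bf q}_j$, a collision.

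I expect the main obstacle to be the hyperbolic case, and specifically the two structural facts about future-pointing timelike vectors---the reverse Cauchy--Schwarz inequality with its equality clause, and the sign determination ${\bf q}_i\boxdot{\bf q}_j<0$---since these are exactly the places where Lorentzian geometry departs from Euclidean intuition; the spherical case is by contrast a routine application of the standard inequality. It is worth emphasizing that the sign determination is precisely what makes the antipodal case vanish on ${\mathbb H}_\kappa^3$: there is no second future-pointing sheet available to host an antipode, which matches the geometric picture of the single upper sheet fixed in Subsection~\ref{Weierstrass}.
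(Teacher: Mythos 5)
Your proposal is correct. For the spherical half it coincides with the paper's argument: both hinge on the equality case of the Euclidean Cauchy--Schwarz inequality, the only cosmetic difference being that you read off $\lambda=\pm1$ from the equal norms, whereas the paper substitutes the proportionality ${\bf q}_j=\tau{\bf q}_i$ back into $\kappa{\bf q}_i\cdot{\bf q}_j=\pm1$ to pin down $\tau$. For the hyperbolic half, however, your route is genuinely different. The paper stays entirely in coordinates: for the collision case it rewrites the constraint so that the quantity $-\kappa^{-1}>0$ plays the role of an extra squared coordinate, invokes the equality case of Cauchy's inequality, and expands via the three-dimensional Lagrange identity into a sum of squares $(w_ix_j-w_jx_i)^2+\cdots-\kappa^{-1}[(w_i-w_j)^2+\cdots]=0$, forcing componentwise equality; for the nonexistence of antipodal configurations it runs a separate spatial Cauchy--Schwarz computation ending in $(z_i+z_j)^2\le 0$. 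You instead prove a reverse Cauchy--Schwarz inequality intrinsically, by decomposing ${\bf q}_j=\alpha{\bf q}_i+{\bf r}$ with ${\bf r}$ $\boxdot$-orthogonal to the timelike vector ${\bf q}_i$ (hence spacelike), and you separately fix the sign ${\bf q}_i\boxdot{\bf q}_j<0$ for future-pointing vectors; the single inequality $\kappa{\bf q}_i\boxdot{\bf q}_j\ge 1$, with its equality clause, then disposes of both the collision and the antipodal statements at once. What your approach buys is uniformity and generality: it is coordinate-free, works verbatim in ${\mathbb H}_\kappa^m$ for any $m$ (the paper's cross-product expansion is tied to three spatial coordinates, though Lagrange's identity does generalize), and exhibits the nonexistence of antipodes as a consequence of the hyperboloid having a single future-pointing sheet. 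What the paper's approach buys is self-containedness: it needs nothing beyond the scalar Cauchy inequality and direct algebra, whereas you must supply one-line proofs of the two Lorentzian facts you invoke (that the orthogonal complement of a timelike vector is spacelike with ${\bf r}\boxdot{\bf r}=0$ only for ${\bf r}={\bf 0}$, and the sign determination); note that the latter is proved by exactly the spatial Cauchy--Schwarz estimate the paper uses in its nonexistence computation, so your proof strictly contains the paper's second half as an ingredient.
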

\begin{proof}
Let us first prove the implication related to collision configurations for $\kappa>0$. Assume that there exist $i,j\in\{1,2,\dots, n\},\ i\ne j$, such that $\kappa{\bf q}_i\odot{\bf q}_j=1$, relationship that can be written as 
\begin{equation}
\kappa(w_iw_j+x_ix_j+y_iy_j+z_iz_j)=1.
\label{1}
\end{equation}
But since the bodies are on ${\mathbb S}_\kappa^3$, the coordinates satisfy the conditions
\begin{equation*}
\kappa(w_i^2+x_i^2+y_i^2+z_i^2)=\kappa(w_j^2+x_j^2+y_j^2+z_j^2)=1.
\end{equation*}
Consequently we can write that
\begin{equation*}
(w_iw_j+x_ix_j+y_iy_j+z_iz_j)^2=(w_i^2+x_i^2+y_i^2+\sigma z_i^2)(w_j^2+x_j^2+y_j^2+z_j^2),
\end{equation*}
which is the equality case of Cauchy's inequality.
Therefore there is a constant $\tau\ne 0$ such that
$w_j=\tau w_i,\ x_j=\tau x_i,\ y_j=\tau y_i$, and $z_j=\tau z_i$. Substituting these values in equation \eqref{1}, we obtain that 
\begin{equation}
\kappa\tau(w_iw_j+x_ix_j+y_iy_j+z_iz_j)=1.
\label{1=1}
\end{equation}
Comparing \eqref{1} and \eqref{1=1}, it follows that
$\tau=1$, so $w_i=w_j$, $x_i=x_j$, $y_i=y_j$, and $z_i=z_j$, therefore the bodies $m_i$ and $m_j$ form a collision configuration.

The proof of the implication related to antipodal configurations for $\kappa>0$ is very similar. Instead of relation \eqref{1}, we have
\begin{equation*}
\kappa(w_iw_j+x_ix_j+y_iy_j+z_iz_j)=-1.
\label{opposite1}
\end{equation*} 
Then, following the above steps, we obtain that $\tau=-1$, so $w_i=-w_j$, $x_i=-x_j$, $y_i=-y_j$, and $z_i=-z_j$, therefore the bodies $m_i$ and $m_j$ form an antipodal configuration.

Let us now prove the implication related to collision configurations in the case $\kappa<0$. Assume that there exist $i,j\in\{1,2,\dots, n\},\ i\ne j$, such that $\kappa{\bf q}_i\odot{\bf q}_j=1$, relationship that can be written as
\begin{equation*}
\kappa(w_iw_j+x_ix_j+y_iy_j-z_iz_j)=1,
\end{equation*} 
which is equivalent to
\begin{equation}
w_iw_j+x_ix_j+y_iy_j-\kappa^{-1}=z_iz_j.
\label{equiv-1}
\end{equation}
But since the bodies are on ${\mathbb H}_\kappa^3$, the coordinates satisfy the conditions
\begin{equation*}
\kappa(w_i^2+x_i^2+y_i^2-z_i^2)=\kappa(w_j^2+x_j^2+y_j^2-z_j^2)=1,
\label{-1=-1}
\end{equation*}
which are equivalent to 
\begin{equation}
w_i^2+x_i^2+y_i^2-\kappa^{-1}=z_i^2\ \
{\rm and} \ \ w_j^2+x_j^2+y_j^2-\kappa^{-1}=z_j^2.
\label{equiv-1=-1}
\end{equation}
From \eqref{equiv-1} and \eqref{equiv-1=-1}, we can conclude that 
\begin{equation*}
(w_iw_j+x_ix_j+y_iy_j-\kappa^{-1})^2=
(w_i^2+x_i^2+y_i^2-\kappa^{-1})(w_j^2+x_j^2+y_j^2-\kappa^{-1}),
\end{equation*}
which is equivalent to
$$
(w_ix_j-w_jx_i)^2+(w_iy_j-w_jy_i)^2+(x_iy_j-x_jy_i)^2
$$
$$
-\kappa^{-1}[(w_i-w_j)^2+(x_i-x_j)^2+(y_i-y_j)^2]=0.
$$
Since $-\kappa^{-1}>0$, it follows from the above relation that 
$w_i=w_j,\ x_i=x_j$, and $y_i=y_j$. Then relation \eqref{equiv-1=-1} implies that $z_i^2=z_j^2$. But since for $\kappa<0$ all $z$ coordinates are positive, we can conclude that $z_i=z_j$, so the bodies $m_i$ and $m_j$ form a collision configuration.

For $\kappa<0$, we now finally prove the non-existence of configurations with $\kappa{\bf q}_i\boxdot{\bf q}_j=-1, \ i,j\in\{1,2,\dots,n\}, i\ne j$. Let us assume that they exist. Then
\begin{equation}
\label{product}
w_iw_j+x_ix_j+y_iy_j=z_iz_j-\kappa^{-1},
\end{equation}  
\begin{equation}
\label{squares}
w_i^2+x_i^2+y_i^2=z_i^2+\kappa^{-1}\ \ {\rm and}\ \ w_j^2+x_j^2+y_j^2=z_j^2+\kappa^{-1}.
\end{equation}
According to Cauchy's inequality, we have
$$
(w_iw_j+x_ix_j+y_iy_j)^2\le(w_i^2+x_i^2+y_i^2)(w_j^2+x_j^2+y_j^2),
$$
which, using \eqref{product} and \eqref{squares}, becomes
$$
(z_iz_j-\kappa^{-1})^2\le(z_i^2+\kappa^{-1})(z_j^2+\kappa^{-1}).
$$
Since $-\kappa^{-1}>0$, this inequality takes the form
$$
(z_i+z_j)^2\le 0,
$$
which is impossible because $z_i,z_j>0$, a contradiction that completes the proof.
\end{proof}

It is easy to construct solutions ending in collisions. Place, for instance, 3 bodies of equal masses at the vertices of an Euclidean equilateral triangle, not lying on the same geodesic if $\kappa>0$, and release them with zero initial velocities. The bodies will end up in a triple collision. (If, for $\kappa>0$, the bodies lie initially on a geodesic and have zero initial velocities, they won't move in time, a situation that corresponds to a fixed-point solution for the equations of motion, as we will show in Section \ref{fixedpoints}.) The question of whether there exist solutions ending in antipodal or hybrid (collision-antipodal) singularities is harder, and it was treated in \cite{Diacu1} and \cite{Diacu002}. But since we are not touching on this subject when dealing with relative equilibria, we will not discuss it further. All we need to worry about in this paper is to avoid placing the bodies at singular initial configurations, i.e.\ at collisions for $\kappa\ne 0$ or at antipodal positions for $\kappa>0$.

\newpage

\part{\rm ISOMETRIES AND RELATIVE EQUILIBRIA}

\section{Isometric rotations in ${\mathbb S}_\kappa^3$ and ${\mathbb H}_\kappa^3$}\label{isometries}

This section introduces the isometric rotations in ${\mathbb S}_\kappa^3$ and ${\mathbb H}_\kappa^3$, since they play an essential role in defining the relative equilibria of the curved $n$-body problem. There are many ways to express these rotations, but their matrix representation will suit our goals best, as it did in Subsection \ref{Weierstrass} for the 2-dimensional Weierstrass model of hyperbolic geometry. 

For $\kappa>0$, the isometric transformations of ${\mathbb S}_\kappa^3$ are given by the elements of the Lie group $SO(4)$ of ${\mathbb R}^4$ that keep ${\mathbb S}_\kappa^3$ invariant. They consist of all orthogonal transformations of the Lie group $O(4)$ represented by matrices of determinant 1, and have the form $PAP^{-1}$, with $P\in SO(4)$ and
\begin{equation}
A=
\left( \begin{array}{cccc}
\cos\theta & -\sin\theta & 0 & 0 \\
\sin\theta & \cos\theta & 0 & 0 \\
0 & 0 & \cos\phi & -\sin\phi\\
0 & 0 & \sin\phi & \cos\phi \end{array} \right),
\label{e-elliptic}
\end{equation}
where $\theta, \phi\in\mathbb R$. We will call these rotations $\kappa$-positive elliptic-elliptic if $\theta\ne 0$ and $\phi\ne 0$, and $\kappa$-positive elliptic if $\theta\ne 0$ and $\phi=0$ (or $\theta= 0$ and $\phi\ne 0$, which is a possibility we will never discuss since it perfectly resembles the previous one). When $\theta=\phi=0$, $A$ is the identity matrix, so no rotation takes place. The above description is a generalization to ${\mathbb S}_\kappa^3$ of Euler's Fixed Axis Theorem for ${\mathbb S}_\kappa^2$.
As we will next explain, the reference to a fixed axis is, from the geometric point of view, far from suggestive in $\mathbb R^4$.

The form of the matrix $A$ given by \eqref{e-elliptic} shows that the $\kappa$-positive elliptic-elliptic transformations have two circular rotations, one relative to the origin of the coordinate system in the plane $wx$ and the other relative to the same point in the plane $yz$. In this case, the only fixed point in $\mathbb R^4$ is the origin of the coordinate system. The $\kappa$-positive elliptic transformations have a single rotation around the origin of the coordinate system that leaves infinitely many axes (in fact, an entire plan) of $\mathbb R^4$ pointwise fixed.

For $\kappa<0$, the isometric transformations of ${\mathbb H}_\kappa^3$ are given by the elements of the Lorentz group ${\rm Lor}(3,1)$, a Lie group in the Minkowski space ${\mathbb R}^{3,1}$. ${\rm Lor}(3,1)$ is formed by all orthogonal transformations of determinant 1 that keep ${\mathbb H}_\kappa^3$ invariant. The elements of this group are the $\kappa$-negative elliptic, $\kappa$-negative hyperbolic, and $\kappa$-negative elliptic-hyperbolic transformations, on one hand, and the $\kappa$-negative parabolic transformations, on the other hand; they can be represented as matrices of the form $PBP^{-1}$ and $PCP^{-1}$, respectively, with $P\in{\rm Lor}(3,1)$, 
\begin{equation}
B=\left( \begin{array}{cccc}
\cos\theta & -\sin\theta & 0 & 0 \\
\sin\theta & \cos\theta & 0 & 0 \\
0 & 0 & \cosh s & \sinh s\\
0 & 0 & \sinh s & \cosh s \end{array} \right),
\label{e-hyperbolic} 
\end{equation}
\begin{equation}
C=\left( \begin{array}{cccc}
1 & 0 & 0 & 0\\
0 &1 & -\xi & \xi  \\
0 & \xi & 1-\xi^2/2 & \xi^2/2 \\
0 & \xi & -\xi^2/2 & 1+\xi^2/2 \end{array} \right),
\label{p-parabolic}
\end{equation}
where $\theta, s, \xi$ are some fixed values in $\mathbb R$. The $\kappa$-negative elliptic, $\kappa$-negative hyperbolic, and $\kappa$-negative elliptic-hyperbolic
transformations correspond to $\theta\ne 0$ and $s=0$, to $\theta=0$ and $s\ne 0$, and to $\theta\ne 0$ and $s\ne 0$, respectively. The above description is a generalization to ${\mathbb H}_\kappa^3$ of the Fixed Axis Theorem for ${\mathbb H}_\kappa^2$, which we presented in Subsection \ref{Weierstrass}. Again, as in the case of the group $SO(4)$, the reference to a fixed axis has no real geometric meaning in $\mathbb R^{3,1}$.

Indeed, from the geometric point of view, the $\kappa$-negative elliptic transformations of $\mathbb R^{3,1}$ are similar to their counterpart, $\kappa$-positive elliptic transformations, in $\mathbb R^4$, namely they have a single circular rotation around the origin of the coordinate system that leaves infinitely many axes of $\mathbb R^{3,1}$ pointwise invariant. The $\kappa$-negative hyperbolic transformations correspond to a single hyperbolic rotation around the origin of the coordinate system that also leaves infinitely many axes of $\mathbb R^{3,1}$ pointwise invariant. The $\kappa$-negative elliptic-hyperbolic transformations have two rotations, a circular one about the origin of the coordinate system, relative to the $wx$ plane, and a hyperbolic one about the origin of the coordinate system, relative to the $yz$ plane. The only point they leave fixed is the origin of the coordinate system. Finally, parabolic transformations correspond to parabolic rotations about the origin of the coordinate system that leave only the $w$ axis pointwise fixed.

\section{Some geometric properties of the isometric rotations}

In this section we aim to understand how the previously defined isometries act in ${\mathbb S}_\kappa^3$ and ${\mathbb H}_\kappa^3$. In fact we will be interested only in the transformations represented by the matrices $A$ and $B$, defined in \eqref{e-elliptic} and \eqref{e-hyperbolic}, respectively. As we will see in Subsection \ref{non-parabolic}, the $\kappa$-negative parabolic rotations represented by the matrix $C$, defined in \eqref{p-parabolic}, generate no relative equilibria in ${\mathbb H}_\kappa^3$, so we don't need to worry about their geometric properties for the purposes of this paper.

Since our earlier work focused on the curved $n$-body problem in the 2-dimensional manifolds ${\mathbb S}_\kappa^2$ and ${\mathbb H}_\kappa^2$, we would like to see whether the above rotations preserve 2-dimensional spheres in ${\mathbb S}_\kappa^3$ and ${\mathbb H}_\kappa^3$
and 2-dimensional hyperboloids in ${\mathbb H}_\kappa^3$.
We begin with the spheres.

\subsection{Invariance of 2-dimensional spheres in
${\mathbb S}_\kappa^3$ and ${\mathbb H}_\kappa^3$}

Let us start with the $\kappa$-positive elliptic-elliptic rotations in ${\mathbb S}_\kappa^3$ and consider first great spheres, which can be obtained, for instance, by the intersection of ${\mathbb S}_\kappa^3$ with the hyperplane $z=0$. We thus obtain the 2-dimensional great sphere
\begin{equation}
{\bf S}_{\kappa,z}^2=\{(w,x,y,0)|\ \! w^2+x^2+y^2=\kappa^{-1}\},
\end{equation}
already defined in \eqref{z-sphere}. Let $(w,x,y,0)$ to be a point on ${\bf S}_{\kappa,z}^2$. Then the $\kappa$-positive elliptic-elliptic transformation \eqref{e-elliptic} takes $(w,x,y,0)$ to the point $(w_1,x_1,y_1,z_1)$ given by 
\begin{equation}
\left(\begin{array}{c}
w_1\\
x_1\\
y_1\\
z_1
\end{array}\right)=
\left( \begin{array}{cccc}
\cos\theta & -\sin\theta & 0 & 0 \\
\sin\theta & \cos\theta & 0 & 0 \\
0 & 0 & \cos\phi & -\sin\phi\\
0 & 0 & \sin\phi & \cos\phi 
\end{array} \right)
\left(\begin{array}{c}
w\\
x\\
y\\
0
\end{array}\right)=
\end{equation}
$$
\left(\begin{array}{c}
w\cos\theta-x\sin\theta\\
w\sin\theta+x\cos\theta\\
y\cos\phi\\
y\sin\phi
\end{array}\right).
$$
Since, in general, $y$ is not zero, it follows that $z_1=y\sin\phi=0$ only if $\phi=0$, a case that corresponds to $\kappa$-positive elliptic transformations. In case of a $\kappa$-positive elliptic-elliptic transformation, the point $(w_1,x_1,y_1,z_1)$ does not lie on ${\bf S}_{\kappa,z}^2$ because this point is not in the hyperplane $z=0$. Without loss of generality, we can always find a coordinate system in which the considered sphere is ${\bf S}_{\kappa,z}^2$. We can therefore draw the following conclusion.

\begin{remark}
For every great sphere of ${\mathbb S}_\kappa^3$, there is a suitable system of coordinates such that $\kappa$-positive elliptic rotations leave the great sphere invariant. However, there is no system of coordinates for which we can find a $\kappa$-positive elliptic-elliptic rotation that leaves a great sphere invariant.
\label{z=0-remark}
\end{remark}

Let us now see what happens with non-great spheres of ${\mathbb S}_\kappa^3$. Such spheres can be obtained, for instance, by intersecting ${\mathbb S}_\kappa^3$ with a hyperplane $z=z_0$, where $|z_0|<\kappa^{-1/2}$ and $z_0\ne 0$. These 2-dimensional sphere are of the form
\begin{equation}
{\bf S}_{\kappa_0,z_0}^2=\{(w,x,y,z)|\ \! w^2+x^2+y^2=\kappa^{-1}-z_0^2,\ \! z=z_0\},
\end{equation}
where $\kappa_0=(\kappa^{-1}-z_0^2)^{-1/2}$ is its curvature.

Let $(w,x,y,z_0)$ be a point on a non-great sphere ${\bf S}_{\kappa_0,z_0}^2$, given by some $z_0$ as above. Then the $\kappa$-positive elliptic-elliptic transformation \eqref{e-elliptic} takes the point $(w,x,y,z_0)$ to the point  
$(w_2,x_2,y_2,z_2)$ given by

\begin{equation}
\left(\begin{array}{c}
w_2\\
x_2\\
y_2\\
z_2
\end{array}\right)=
\left( \begin{array}{cccc}
\cos\theta & -\sin\theta & 0 & 0 \\
\sin\theta & \cos\theta & 0 & 0 \\
0 & 0 & \cos\phi & -\sin\phi\\
0 & 0 & \sin\phi & \cos\phi 
\end{array} \right)
\left(\begin{array}{c}
w\\
x\\
y\\
z_0
\end{array}\right)=
\end{equation}
$$
\left(\begin{array}{c}
w\cos\theta-x\sin\theta\\
w\sin\theta+x\cos\theta\\
y\cos\phi-z_0\sin\phi\\
y\sin\phi+z_0\cos\phi
\end{array}\right).
$$
Since, in general, $y$ is not zero, it follows that $z_2=y\sin\phi+z_0\cos\phi=z_0$ only if $\phi=0$, a case that corresponds to $\kappa$-positive elliptic transformations. In the case of a $\kappa$-positive elliptic-elliptic transformation, the point $(w_2,x_2,y_2,z_2)$ does not lie on ${\bf S}_{\kappa_0,z_0}^2$ because this point is not in the hyperplane $z=z_0$. Without loss of generality, we can always reduce the question we posed above to the sphere ${\bf S}_{\kappa_0,z_0}^2$. We can therefore draw the following conclusion, which resembles Remark \ref{z=0-remark}. 

\begin{remark}
For every non-great sphere of ${\mathbb S}_\kappa^3$, there is a suitable system of coordinates such that $\kappa$-positive elliptic rotations leave that non-great sphere invariant. However, there is no system of coordinates for which there exists a $\kappa$-positive elliptic-elliptic rotation that leaves a non-great sphere invariant.
\label{z=z_0-remark}
\end{remark}

Since in ${\mathbb H}_\kappa^3$ we have $z>0$, 2-dimensional spheres cannot be centred around the origin of the coordinate system. We therefore look for 2-dimensional spheres centred on the $z$ axis, with $z>|\kappa|^{-1/2}$ (because $z=|\kappa|^{-1/2}$ is the smallest possible $z$ coordinate in ${\mathbb H}_\kappa^3$, attained only by the point $(0,0,0,|\kappa|^{-1/2})$. Such spheres can be obtained by intersecting ${\mathbb H}_\kappa^3$ with a plane $z=z_0$, where $z_0>|\kappa|^{-1/2}$, and they are given by
\begin{equation}
{\bf S}_{\kappa_0,z_0}^{2,h}=\{(w,x,y,z)|\ \! w^2+x^2+y^2=z_0^2+\kappa^{-1},\ \! z=z_0\},
\end{equation}
where $h$ indicates that the spheres lie in a 3-dimensional hyperbolic space, and $\kappa_0=(z_0^2+\kappa^{-1})^{-1/2}>0$ is the curvature of the sphere.

Let $(w,x,y,z_0)$ be a point on the sphere ${\bf S}_{\kappa_0,z_0}^{2,h}$. Then the $\kappa$-negative elliptic transformation $B$, given by \eqref{e-hyperbolic} with $s=0$, takes the point $(w,x,y,z_0)$ to the point 
$(w_3,x_3,y_3,z_3)$ given by
\begin{equation}
\left(\begin{array}{c}
w_3\\
x_3\\
y_3\\
z_3
\end{array}\right)=
\left( \begin{array}{cccc}
\cos\theta & -\sin\theta & 0 & 0 \\
\sin\theta & \cos\theta & 0 & 0 \\
0 & 0 & 1 & 0\\
0 & 0 & 0 & 1 
\end{array} \right)
\left(\begin{array}{c}
w\\
x\\
y\\
z_0
\end{array}\right)=
\end{equation}
$$
\left(\begin{array}{c}
w\cos\theta-x\sin\theta\\
w\sin\theta+x\cos\theta\\
y\\
z_0
\end{array}\right),
$$
which also lies on the sphere ${\bf S}_{\kappa_0,z_0}^{2,h}$. Indeed, since 
$z_3=z_0$ and $w_3^2+x_3^2+y_3^2=z_0^2+\kappa^{-1}$, it means that  $(w_3,x_3,y_3,z_3)$ also lies on the sphere ${\bf S}_{\kappa_0,z_0}^{2,h}$. 

Since for any 2-dimensional sphere of ${\mathbb H}_\kappa^3$ we can find a coordinate system and suitable values for $\kappa_0$ and $z_0$ such that the sphere has the form ${\bf S}_{\kappa_0,z_0}^{2,h}$, we can draw the following conclusion.

\begin{remark}
For every 2-dimensional sphere of ${\mathbb H}_\kappa^3$, there is a system of coordinates such that $\kappa$-negative  elliptic rotations leave the sphere invariant.
\end{remark}

Let us further see what happens with $\kappa$-negative hyperbolic transformations in 
${\mathbb H}_\kappa^3$.
Let $(w,x,y,z_0)$ be a point on the sphere ${\bf S}_{\kappa_0,z_0}^{2,h}$. Then the $\kappa$-negative  hyperbolic transformation $B$, given by \eqref{e-hyperbolic} with $\theta=0$, takes the point $(w,x,y,z_0)$ to $(w_4,x_4,y_4,z_4)$ given by
\begin{equation}
\left(\begin{array}{c}
w_4\\
x_4\\
y_4\\
z_4
\end{array}\right)=
\left( \begin{array}{cccc}
1 & 0 & 0 & 0 \\
0 & 1 & 0 & 0 \\
0 & 0 & \cosh s & \sinh s\\
0 & 0 & \sinh s & \cosh s 
\end{array} \right)
\left(\begin{array}{c}
w\\
x\\
y\\
z_0
\end{array}\right)=
\end{equation}
$$
\left(\begin{array}{c}
w\\
x\\
y\cosh s+z_0\sinh s\\
y\sinh s+z_0\cosh s
\end{array}\right),
$$
which does not lie on ${\bf S}_{\kappa_0,z_0}^{2,h}$. Indeed, since $z_3=y\sinh s+z_0\cosh s=z_0$ only for $s=0$, a case we exclude because the above transformation is the identity, the point $(w_4,x_4,y_4,z_4)$ does not lie on a sphere of radius $\sqrt{z_0^2+\kappa^{-1}}$. Therefore we can draw the following conclusion.

\begin{remark}
Given a 2-dimensional sphere of curvature $\kappa_0=(z_0+\kappa^{-1})^{-1/2}$, with $z_0>|\kappa|^{-1/2}$, in ${\mathbb H}_\kappa^3$, there is no coordinate system for which some $\kappa$-negative hyperbolic transformation would leave the sphere invariant. Consequently the same is true about $\kappa$-negative  elliptic-hyper\-bolic transformations.
\end{remark}

We will further see how the problem of invariance relates to 2-dimensional hyperboloids in ${\mathbb H}_\kappa^3$. Let us start with $\kappa$-negative elliptic transformations in ${\mathbb H}_\kappa^3$.

\subsection{Invariance of 2-dimensional hyperboloids in
${\mathbb H}_\kappa^3$}

Let us first check whether $\kappa$-negative  elliptic rotations preserve the great 2-dimensional hyperboloids of ${\mathbb H}_\kappa^3$. For this consider the 2-dimensional hyperboloid
\begin{equation}
{\bf H}_{\kappa, y}^2=\{(w,x,0,z)\ |\ w^2+x^2-z^2=\kappa^{-1}\},
\end{equation}
already defined in \eqref{great-Y}, and obtained by intersecting ${\mathbb H}_\kappa^3$ with the hyperplane $y=0$. Let $(w,x,0,z)$ be a point on ${\bf H}_{\kappa, y}^2$. Then a $\kappa$-negative  elliptic rotation takes the point $(w,x,0,z)$ to the point $(w_5,x_5,y_5,z_5)$ given by
\begin{equation}
\left(\begin{array}{c}
w_5\\
x_5\\
y_5\\
z_5
\end{array}\right)=
\left( \begin{array}{cccc}
\cos\theta & -\sin\theta & 0 & 0 \\
\sin\theta & \cos\theta & 0 & 0 \\
0 & 0 & 1 & 0\\
0 & 0 & 0 & 1 
\end{array} \right)
\left(\begin{array}{c}
w\\
x\\
0\\
z
\end{array}\right)=
\end{equation}
$$
\left(\begin{array}{c}
w\cos\theta-x\sin\theta\\
w\sin\theta+x\cos\theta\\
0\\
z
\end{array}\right),
$$
which, obviously, also belongs to ${\bf H}_{\kappa, y}^2$. Since for any 2-dimensional hyperboloid of curvature $\kappa$ we can find a coordinate system such that the hyperboloid can be represented as ${\bf H}_{\kappa, y}^2$, we can draw the following conclusion.

\begin{remark}
Given a 2-dimensional hyperboloid of curvature $\kappa$ in ${\mathbb H}_\kappa^3$, there is a coordinate system for which the hyperboloid is invariant to $\kappa$-negative  elliptic rotations.
\label{remark-ellip}
\end{remark}

Let us further check what happens in the case of $\kappa$-negative hyperbolic rotations. Consider the 2-dimensional hyperboloid of curvature $\kappa$ given by
\begin{equation}
{\bf H}_{\kappa, w}^2=\{(0,x,y,z)\ |\ x^2+y^2-z^2=\kappa^{-1},\},
\end{equation}
already defined in \eqref{h2w}, and obtained by intersecting ${\mathbb H}_\kappa^3$ with the hyperplane $w=0$. Let $(0,x,y,z)$ be a point on ${\bf H}_{\kappa, w}^2$. Then a $\kappa$-negative hyperbolic rotation takes the point $(0,x,y,z)$ to the point $(w_6,x_6,y_6,z_6)$ given by
\begin{equation}
\left(\begin{array}{c}
w_6\\
x_6\\
y_6\\
z_6
\end{array}\right)=
\left( \begin{array}{cccc}
1 & 0 & 0 & 0 \\
0 & 1 & 0 & 0 \\
0 & 0 & \cosh s & \sinh s\\
0 & 0 & \sinh s & \cosh s 
\end{array} \right)
\left(\begin{array}{c}
0\\
x\\
y\\
z
\end{array}\right)=
\end{equation}
$$
\left(\begin{array}{c}
0\\
x\\
y\cosh s+z\sinh s\\
y\sinh s+z\sinh s
\end{array}\right),
$$
which, obviously, also belongs to ${\bf H}_{\kappa, w}^2$. Since for any 2-dimensional hyperboloid of curvature $\kappa$ we can find a coordinate system such that the hyperboloid can be represented as ${\bf H}_{\kappa, w}^2$, we can draw the following conclusion.

\begin{remark}
Given a 2-dimensional hyperboloid of curvature $\kappa$ in ${\mathbb H}_\kappa^3$, there is a coordinate system for which the hyperboloid is invariant to $\kappa$-negative hyperbolic rotations.
\label{remark-hyp}
\end{remark}

\begin{remark}
The coordinate system in Remark \ref{remark-ellip} is different from the coordinate system in Remark \ref{remark-hyp}, so $\kappa$-negative elliptic-hyperbolic transformation don't leave 2-dimensional hyperboloids of curvature $\kappa$ invariant in ${\mathbb H}_\kappa^3$.
\end{remark}

The next step is to see whether $\kappa$-negative  elliptic rotations preserve the 2-dimensional hyperboloids of curvature $\kappa_0=-(y_0^2-\kappa^{-1})^{-1/2}\ne\kappa$ of ${\mathbb H}_\kappa^3$. For this consider the 2-dimensional hyperboloid
\begin{equation}
{\bf H}_{\kappa_0, y_0}^2=\{(w,x,y,z)\ |\ w^2+x^2-z^2=\kappa^{-1}-y_0^2,\ \! y=y_0\},
\end{equation}
obtained by intersecting ${\mathbb H}_\kappa^3$ with the hyperplane $y=y_0$, with $y_0\ne 0$. Let $(w,x,y_0,z)$ be a point on ${\bf H}_{\kappa, y}^2$. Then a $\kappa$-negative  elliptic rotation takes the point $(w,x,y_0,z)$ to the point $(w_7,x_7,y_7,z_7)$ given by
\begin{equation}
\left(\begin{array}{c}
w_7\\
x_7\\
y_7\\
z_7
\end{array}\right)=
\left( \begin{array}{cccc}
\cos\theta & -\sin\theta & 0 & 0 \\
\sin\theta & \cos\theta & 0 & 0 \\
0 & 0 & 1 & 0\\
0 & 0 & 0 & 1 
\end{array} \right)
\left(\begin{array}{c}
w\\
x\\
y_0\\
z
\end{array}\right)=
\end{equation}
$$
\left(\begin{array}{c}
w\cos\theta-x\sin\theta\\
w\sin\theta+x\cos\theta\\
y_0\\
z
\end{array}\right),
$$
which, obviously, also belongs to ${\bf H}_{\kappa_0, y_0}^2$. Since for any 2-dimensional hyperboloid of curvature $\kappa_0$ we can find a coordinate system such that the hyperboloid can be represented as ${\bf H}_{\kappa_0, y_0}^2$, we can draw the following conclusion.

\begin{remark}
Given a 2-dimensional hyperboloid of curvature $\kappa_0=-(y_0^2-\kappa^{-1})^{-1/2}\ne\kappa$ in ${\mathbb H}_\kappa^3$, there is a coordinate system for which the hyperboloid is invariant to $\kappa$-negative  elliptic rotations.
\label{remark-ellip0}
\end{remark}

Consider further the 2-dimensional hyperboloid of curvature $\kappa_0=-(w_0^2-\kappa^{-1})^{-1/2}\ne\kappa$ given by
\begin{equation}
{\bf H}_{\kappa_0, w_0}^2=\{(w,x,y,z)\ |\ x^2+y^2-z^2=\kappa^{-1}-w_0^2,\ \! w=w_0\},
\end{equation}
obtained by intersecting ${\mathbb H}_\kappa^3$ with the hyperplane $w=w_0\ne 0$. Let the point $(w_0,x,y,z)$ lie on ${\bf H}_{\kappa_0, w_0}^2$. Then a $\kappa$-negative hyperbolic rotation takes the point $(w_0,x,y,z)$ to the point $(w_8,x_8,y_8,z_8)$ given by
\begin{equation}
\left(\begin{array}{c}
w_8\\
x_8\\
y_8\\
z_8
\end{array}\right)=
\left( \begin{array}{cccc}
1 & 0 & 0 & 0 \\
0 & 1 & 0 & 0 \\
0 & 0 & \cosh s & \sinh s\\
0 & 0 & \sinh s & \cosh s 
\end{array} \right)
\left(\begin{array}{c}
w_0\\
x\\
y\\
z
\end{array}\right)=
\end{equation}
$$
\left(\begin{array}{c}
w_0\\
x\\
y\cosh s+z\sinh s\\
y\sinh s+z\sinh s
\end{array}\right),
$$
which, obviously, also belongs to ${\bf H}_{\kappa_0, w_0}^2$. Since for any 2-dimensional hyperboloid of curvature $\kappa_0=-(w_0^2-\kappa^{-1/2})^{-1}$ we can find a coordinate system such that the hyperboloid can be represented as ${\bf H}_{\kappa_0, w_0}^2$, we can draw the following conclusion.

\begin{remark}
Given a 2-dimensional hyperboloid of curvature $\kappa_0=-(w_0^2-\kappa^{-1})^{-1/2}$ in ${\mathbb H}_\kappa^3$, there is a coordinate system for which the hyperboloid is invariant to $\kappa$-negative  hyperbolic rotations.
\label{remark-hyp0}
\end{remark}

\begin{remark}
Since the coordinate system in Remark \ref{remark-ellip0} is different from the coordinate system in Remark \ref{remark-hyp0}, $\kappa$-negative  elliptic-hyperbolic transformation don't leave 2-dimensional hyperboloids of curvature $\kappa_0\ne\kappa$ invariant in ${\mathbb H}_\kappa^3$.
\label{remark-ellip-hyp}
\end{remark}

\section{Relative equilibria}\label{releq}

The goal of this section is to introduce the concepts we will explore in the rest of the paper, namely the relative equilibrium solutions (also called relative equilibrium orbits or, simply, relative equilibria) of the curved $n$-body problem. For relative equilibria, the particle system behaves like a rigid body, i.e.\ all the mutual distances between the point masses remain constant during the motion. In other words, the bodies move under the action of an element belonging to a rotation group, so, in the light of Section \ref{isometries}, we can define 6 types of relative equilibria in ${\mathbb M}_\kappa^3$: 2 in ${\mathbb S}_\kappa^3$ and 4 in ${\mathbb H}_\kappa^3$. In each case, we will bring the expressions involved in these natural definitions to simpler forms. We will later see that 1 of the 4 types of relative equilibria we define in ${\mathbb H}_\kappa^3$ does not translate into solutions of the equations of motion. In what follows, the upper $T$ will denote the transpose of a vector.

\subsection{Definition of $\kappa$-positive elliptic relative equilibria}

The first kind of relative equilibria we will introduce in this paper are inspired by the $\kappa$-positive elliptic rotations of ${\mathbb S}_\kappa^3$.

\begin{definition}{\bf ($\kappa$-positive elliptic relative equilibria)}
Let ${\bf q}^0=({\bf q}_1^0, {\bf q}_2^0,\dots, {\bf q}_n^0)$ be a nonsingular initial position of the point particles of masses $m_1, m_2,\dots, m_n>0, n\ge 2$, on the manifold ${\mathbb S}^3_\kappa$, i.e.\ for $\kappa>0$, where ${\bf q}_i^0=(w_i^0, x_i^0, y_i^0, z_i^0), \ i=1,2,\dots,n$. Then a solution of the form ${\bf q}=({\mathcal A}[{\bf q}_1^0]^T, {\mathcal A}[{\bf q}_2^0]^T, \dots, {\mathcal A}[{\bf q}_n^0]^T)$ of system \eqref{second}, with 
\begin{equation}
{\mathcal A}(t)=\left( \begin{array}{cccc}
\cos\alpha t & -\sin\alpha t & 0 & 0 \\
\sin\alpha t & \cos\alpha t & 0 & 0 \\
0 & 0 & 1 & 0\\
0 & 0 & 0 & 1 \end{array} \right),
\end{equation}
where $\alpha\ne 0$ denotes the frequency, is called a (simply rotating) $\kappa$-positive elliptic relative equilibrium.
\label{elliptic-positive}
\end{definition}

\begin{remark}
In $\mathcal A$, the elements involving trigonometric functions could well be in the lower right corner instead of the upper left corner of the matrix, but the behaviour of the bodies would be similar, so we will always use the above form of the matrix ${\mathcal A}$.
\end{remark}

If $r_i=:\sqrt{(w_i^0)^2+(x_i^0)^2}$, we can find constants $a_i\in\mathbb R, \ i=1,2,\dots,n$, such that $w_i^0 = r_i\cos a_i,\ x_i^0=r_i\sin a_i, \ i=1,2,\dots,n$.  Then 
\begin{equation*}
{\mathcal A}(t)[{\bf q}_i^0]^T=\left( \begin{array}{c}
w_i^0\cos\alpha t - x_i^0\sin\alpha t\\
w_i^0\sin\alpha t + x_i^0\cos\alpha t \\
y_i^0\\
z_i^0 \end{array} \right)=
\end{equation*}
\begin{equation*}
\left( \begin{array}{c}
r_i\cos a_i\cos\alpha t - r_i\sin a_i\sin\alpha t\\
r_i\cos a_i\sin\alpha t + r_i\sin a_i\cos\alpha t \\
y_i^0\\
z_i^0 \end{array} \right)=
\left( \begin{array}{c}
r_i\cos(\alpha t+ a_i)\\
r_i\sin(\alpha t+a_i) \\
y_i^0\\
z_i^0\end{array} \right),  
\end{equation*}
$i=1,2,\dots,n.$

\subsection{Definition of $\kappa$-positive elliptic-elliptic relative equilibria}

The second kind of relative equilibria we introduce are inspired by the $\kappa$-positive elliptic-elliptic rotations of ${\mathbb S}_\kappa^3$.

\begin{definition}{\bf ($\kappa$-positive elliptic-elliptic relative equilibria)}
Let ${\bf q}^0=({\bf q}_1^0, {\bf q}_2^0,\dots, {\bf q}_n^0)$ be a nonsingular initial position of the bodies of masses $m_1, m_2,\dots, m_n>0, n\ge 2$, on the manifold ${\mathbb S}^3_\kappa$, i.e.\ for $\kappa>0$, where ${\bf q}_i^0=(w_i^0, x_i^0, y_i^0, z_i^0), \ i=1,2,\dots,n$. Then a solution of the form ${\bf q}=({\mathcal B}[{\bf q}_1^0]^T, {\mathcal B}[{\bf q}_2^0]^T, \dots, {\mathcal B}[{\bf q}_n^0]^T)$ of system \eqref{second}, with 
\begin{equation}
{\mathcal B}(t)=\left( \begin{array}{cccc}
\cos\alpha t & -\sin\alpha t & 0 & 0 \\
\sin\alpha t & \cos\alpha t & 0 & 0 \\
0 & 0 & \cos\beta t & -\sin\beta t\\
0 & 0 & \sin\beta t & \cos\beta t \end{array} \right),
\end{equation}
where $\alpha,\beta\ne 0$ are the frequencies, is called a (doubly rotating) $\kappa$-positive elliptic-elliptic relative equilibrium. 
\label{elliptic-elliptic}
\end{definition}

If $r_i=:\sqrt{(w_i^0)^2+(x_i^0)^2},\ \rho_i:=\sqrt{(y_i^0)^2+(z_i^0)^2}$, we can find constants $a_i, b_i\in\mathbb R, \ i=1,2,\dots,n$, such that $w_i^0 = r_i\cos a_i, x_i^0=r_i\sin a_i, y_i^0=\rho_i\cos b_i$, and $z_i^0=\rho_i\sin b_i, \ i=1,2,\dots,n$.  Then 
\begin{equation*}
{\mathcal B}(t)[{\bf q}_i^0]^T=\left( \begin{array}{c}
w_i^0\cos\alpha t - x_i^0\sin\alpha t\\
w_i^0\sin\alpha t + x_i^0\cos\alpha t \\
y_i^0\cos\beta t  - z_i^0\sin\beta t\\
y_i^0\sin\beta t + z_i^0\cos\beta t \end{array} \right)=
\end{equation*}
\begin{equation*}
\left( \begin{array}{c}
r_i\cos a_i\cos\alpha t - r_i\sin a_i\sin\alpha t\\
r_i\cos a_i\sin\alpha t + r_i\sin a_i\cos\alpha t \\
\rho_i\cos b_i\cos\beta t  - \rho_i\sin b_i\sin\beta t\\
\rho_i\cos b_i\sin\beta t + \rho_i\sin b_i\cos\beta t \end{array} \right)=
\left( \begin{array}{c}
r_i\cos(\alpha t+ a_i)\\
r_i\sin(\alpha t+a_i) \\
\rho_i\cos(\beta t+b_i)\\
\rho_i\sin(\beta t+b_i)\end{array} \right),  
\end{equation*}
$i=1,2,\dots,n.$

\subsection{Definition of $\kappa$-negative elliptic relative equilibria}

The third kind of relative equilibria we introduce here are inspired by the $\kappa$-negative elliptic rotations of ${\mathbb H}_\kappa^3$.

\begin{definition}{\bf ($\kappa$-negative elliptic relative equilibria)}
Let ${\bf q}^0=({\bf q}_1^0, {\bf q}_2^0,\dots, {\bf q}_n^0)$ be a nonsingular initial position  of the point particles of masses $m_1, m_2,\dots, m_n>0, n\ge 2,$ in ${\mathbb H}^3_\kappa$, i.e.\ for $\kappa<0$, where ${\bf q}_i^0=(w_i^0, x_i^0, y_i^0, z_i^0)$,\ $i=1,2,\dots,n$. Then a solution of the form ${\bf q}=({\mathcal C}[{\bf q}_1^0]^T, {\mathcal C}[{\bf q}_2^0]^T, \dots, {\mathcal C}[{\bf q}_n^0]^T)$  of system \eqref{second}, with
 \begin{equation}
{\mathcal C}(t)=\left( \begin{array}{cccc}
\cos\alpha t & -\sin\alpha t & 0 & 0 \\
\sin\alpha t & \cos\alpha t & 0 & 0 \\
0 & 0 & 1 & 0\\
0 & 0 & 0 & 1 \end{array} \right),
\end{equation}
where $\alpha\ne 0$ is the frequency, is called a (simply rotating) $\kappa$-negative elliptic relative equilibrium.
\label{elliptic-negative}
\end{definition}

If $r_i=:\sqrt{(w_i^0)^2+(x_i^0)^2}$, we can find $a_i\in\mathbb R, \ i=1,2,\dots,n,$ such that $w_i^0 = r_i\cos a_i, x_i^0=r_i\sin a_i, \ i=1,2,\dots,n$, so 
\begin{equation}
{\mathcal C}(t)[{\bf q}_i^0]^T=\left( \begin{array}{c}
w_i^0\cos\alpha t - x_i^0\sin\alpha t\\
w_i^0\sin\alpha t + x_i^0\cos\alpha t \\
y_i^0\\
z_i^0 \end{array} \right)=   
\end{equation}
\begin{equation*}
\left( \begin{array}{c}
r_i\cos a_i\cos\alpha t - r_i\sin a_i\sin\alpha t\\
r_i\cos a_i\sin\alpha t + r_i\sin a_i\cos\alpha t \\
y_i^0\\
z_i^0 \end{array} \right)=
\left( \begin{array}{c}
r_i\cos(\alpha t+ a_i)\\
r_i\sin(\alpha t+a_i) \\
y_i^0\\
z_i^0 \end{array} \right),
\end{equation*}
$i=1,2,\dots,n.$

\subsection{Definition of $\kappa$-negative hyperbolic relative equilibria}

The fourth kind of relative equilibria we introduce here are inspired by the $\kappa$-negative hyperbolic rotations of ${\mathbb H}_\kappa^3$.

\begin{definition}{\bf ($\kappa$-negative hyperbolic relative equilibria)}
Let ${\bf q}^0=({\bf q}_1^0, {\bf q}_2^0,\dots, {\bf q}_n^0)$ be a nonsingular initial position of the bodies of masses $m_1, m_2,\dots, m_n>0, n\ge 2,$ in ${\mathbb H}^3_\kappa$, i.e.\ for $\kappa<0$, where the initial positions are ${\bf q}_i^0=(w_i^0, x_i^0, y_i^0, z_i^0), \ i=1,2,\dots,n$. Then a solution of system \eqref{second} of the form ${\bf q}=({\mathcal D}[{\bf q}_1^0]^T, {\mathcal D}[{\bf q}_2^0]^T, \dots, {\mathcal D}[{\bf q}_n^0]^T)$, with 
\begin{equation}
{\mathcal D}(t)=\left( \begin{array}{cccc}
1 & 0 & 0 & 0 \\
1 & 0 & 0 & 0 \\
0 & 0 & \cosh \beta t & \sinh \beta t\\
0 & 0 & \sinh \beta t & \cosh \beta t \end{array} \right),
\end{equation}
where $\beta\ne 0$ denotes the frequency, is called a (simply rotating) $\kappa$-negative hyperbolic relative equilibrium.
\label{hyperbolic}
\end{definition}

If $\eta_i:=\sqrt{(z_i^0)^2-(y_i^0)^2}$, we can find constants $b_i\in\mathbb R, \ i=1,2,\dots,n$, such that  $y_i^0=\eta_i\sinh b_i$ and $z_i^0=\eta_i\cosh b_i, \ i=1,2,\dots,n$.  Then 
\begin{equation*}
{\mathcal D}(t)[{\bf q}_i^0]^T=\left( \begin{array}{c}
w_i^0\\
x_i^0\\
y_i^0\cosh bt  + z_i^0\sinh bt\\
y_i^0\sinh bt + z_i^0\cosh bt \end{array} \right)=
\end{equation*}
\begin{equation*}
\left( \begin{array}{c}
w_i^0\\
x_i^0 \\
\eta_i\sinh b_i\cosh bt + \eta_i\cosh b_i\sinh bt\\
\eta_i\sinh b_i\sinh bt + \eta_i\cosh b_i\cosh bt \\
\end{array} \right)
=\left( \begin{array}{c}
w_i^0\\
x_i^0 \\
\eta_i\sinh(bt+ b_i)\\
\eta_i\cosh(bt+b_i) \\
\end{array} \right),
\end{equation*} 
$ i=1,2,\dots,n.$

\subsection{Definition of $\kappa$-negative elliptic-hyperbolic relative equilibria}

The fifth kind of relative equilibria we introduce here are inspired the $\kappa$-negative elliptic-hyperbolic rotations of ${\mathbb H}_\kappa^3$.

\begin{definition}{\bf ($\kappa$-negative elliptic-hyperbolic relative equilibria)}
Let ${\bf q}^0=({\bf q}_1^0, {\bf q}_2^0,\dots, {\bf q}_n^0)$ be a nonsingular initial position of the point particles of masses $m_1, m_2,\dots, m_n>0, n\ge 2,$ in ${\mathbb H}^3_\kappa$, i.e.\ for $\kappa<0$, where ${\bf q}_i^0=(w_i^0, x_i^0, y_i^0, z_i^0), \ i=1,2,\dots,n$. Then a solution of system \eqref{second} of the form ${\bf q}=({\mathcal E}[{\bf q}_1^0]^T, {\mathcal E}[{\bf q}_2^0]^T, \dots, {\mathcal E}[{\bf q}_n^0]^T)$, with 
\begin{equation}
{\mathcal E}(t)=\left( \begin{array}{cccc}
\cos\alpha t & -\sin\alpha t & 0 & 0 \\
\sin\alpha t & \cos\alpha t & 0 & 0 \\
0 & 0 & \cosh \beta t & \sinh \beta t\\
0 & 0 & \sinh \beta t & \cosh \beta t \end{array} \right),
\end{equation}
where $\alpha, \beta \ne 0$ denote the frequencies, is called a (doubly rotating) $\kappa$-negative elliptic-hyperbolic relative equilibrium.
\label{elliptic-hyperbolic}
\end{definition}

If $r_i:=\sqrt{(w_i^0)^2+(x_i^0)^2},\ \eta_i:=\sqrt{(z_i^0)^2-(y_i^0)^2}$, we can find constants $a_i, b_i\in\mathbb R, \ i=1,2,\dots,n$, such that $w_i^0 = r_i\cos a_i, x_i^0=r_i\sin a_i, y_i^0=\eta_i\sinh b_i$, and $z_i^0=\eta_i\cosh b_i, \ i=1,2,\dots,n$.  Then 
\begin{equation*}
{\mathcal E}(t)[{\bf q}_i^0]^T=\left( \begin{array}{c}
w_i^0\cos\alpha t - x_i^0\sin\alpha t\\
w_i^0\sin\alpha t + x_i^0\cos\alpha t \\
y_i^0\cosh \beta t  + z_i^0\sinh \beta t\\
y_i^0\sinh \beta t + z_i^0\cosh \beta t \end{array} \right)=
\end{equation*}
\begin{equation*}
\left( \begin{array}{c}
r_i\cos a_i\cos\alpha t - r_i\sin a_i\sin\alpha t\\
r_i\cos a_i\sin\alpha t + r_i\sin a_i\cos\alpha t \\
\eta_i\sinh \beta_i\cosh \beta t + \eta_i\cosh \beta_i\sinh \beta t\\
\eta_i\sinh \beta_i\sinh \beta t + \eta_i\cosh \beta_i\cosh \beta t \\
\end{array} \right)
=\left( \begin{array}{c}
r_i\cos(\alpha t+ a_i)\\
r_i\sin(\alpha t+a_i) \\
\eta_i\sinh(\beta t+ b_i)\\
\eta_i\cosh(\beta t+b_i) \\
\end{array} \right),
\end{equation*} 
$ i=1,2,\dots,n.$

\subsection{Definition of $\kappa$-negative parabolic relative equilibria}

The sixth class of relative equilibria we introduce here are inspired by the $\kappa$-negative parabolic rotations of ${\mathbb H}_\kappa^3$.

\begin{definition}{\bf ($\kappa$-negative parabolic relative equilibria)}
Consider a nonsingular initial position  ${\bf q}^0=({\bf q}_1^0, {\bf q}_2^0,\dots, {\bf q}_n^0)$ of the point particles of masses $m_1, m_2,\dots, m_n>0, n\ge 2,$ on the manifold ${\mathbb H}^3_\kappa$, i.e.\ for $\kappa<0$, where ${\bf q}_i^0=(w_i^0, x_i^0, y_i^0, z_i^0),\  i=1,2,\dots,n$. Then a solution  of the form
${\bf q}=({\mathcal F}[{\bf q}_1^0]^T, {\mathcal F}[{\bf q}_2^0]^T, \dots, {\mathcal F}[{\bf q}_n^0]^T)$ of system \eqref{second}, with 
\begin{equation}
{\mathcal F}(t)=\left( \begin{array}{cccc}
1 & 0 & 0 & 0 \\
0 & 1 & -t & t \\
0 & t & 1-t^2/2 & t^2/2 \\
0 & t & -t^2/2 & 1+t^2/2 \end{array} \right),
\end{equation}
is called a (simply rotating) $\kappa$-negative parabolic relative equilibrium.
\label{parabolic}
\end{definition}

For simplicity, we denote $\alpha_i:=w_i^0, \beta_i:=x_i^0, \gamma_i:=y_i^0,
\delta_i:=z_i^0,\ i=1,2,\dots,n$. Then parabolic relative equilibria take the form
\begin{equation*}
{\mathcal F}(t)[{\bf q}_i^0]^T=\left( \begin{array}{c}
w_i^0\\
x_i^0 - y_i^0t + z_i^0t \\
x_i^0t + y_i^0(1-t^2/2) + z_i^0t^2/2\\
x_i^0t -y_i^0t^2/2 + z_i^0(1+t^2/2) \end{array} \right)
\end{equation*}
\begin{equation*}
=\left( \begin{array}{c}
\alpha_i \\
\beta_i+(\delta_i-\gamma_i)t\\
\gamma_i+\beta_i t+(\delta_i-\gamma_i)t^2/2\\
\delta_i+\beta_i t+(\delta_i-\gamma_i)t^2/2,\\ \end{array} \right),\  i=1,2,\dots,n.
\end{equation*}

\subsection{Formal expressions of the relative equilibria}
To summarize the previous findings, we can represent the above 6 types of relative equilibria of the 3-dimensional curved $n$-body problem in the form
\begin{equation*}
{\bf q}=({\bf q}_1, {\bf q}_2,\dots, {\bf q}_n),\ \ {\bf q}_i=(w_i, x_i, y_i, z_i),\ i=1,2,\dots,n,
\end{equation*}
\begin{equation} 
[\kappa>0, elliptic]:\ \ \
\begin{cases}
w_i(t)=r_i\cos(\alpha t+a_i)\\  
x_i(t)=r_i\sin(\alpha t+a_i)\\
y_i(t)=y_i\ {\rm(constant)}\\  
z_i(t)=z_i\ {\rm(constant)},\\
\end{cases}
\label{elliptic1-summarized}
\end{equation}
with $w_i^2+x_i^2=r_i^2,\ r_i^2+y_i^2+z_i^2=\kappa^{-1},
\ i=1,2,\dots,n$;
\begin{equation} 
[\kappa>0, elliptic{\rm-}elliptic]:\ \ \
\begin{cases}
w_i(t)=r_i\cos(\alpha t+a_i)\\  
x_i(t)=r_i\sin(\alpha t+a_i)\\
y_i(t)=\rho_i\cos(\beta t+b_i)\\  
z_i(t)=\rho_i\sin(\beta t+b_i),\\
\end{cases}
\label{elliptic-elliptic-summarized}
\end{equation}
with $w_i^2+x_i^2=r_i^2,\ y_i^2+z_i^2=\rho_i^2,$ so $r_i^2+\rho_i^2=\kappa^{-1},
\ i=1,2,\dots,n$;
\begin{equation} 
[\kappa<0, elliptic]:\ \ \ 
\begin{cases}
w_i(t)=r_i\cos(\alpha t+a_i)\\
x_i(t)=r_i\sin(\alpha t+a_i)\\
y_i(t)=y_i\ {\rm (constant)}\\
z_i(t)=z_i\ {\rm (constant)},\\
\end{cases}
\label{elliptic2-summarized}
\end{equation}
with $w_i^2+x_i^2=r_i^2, \ r_i^2+y_i^2-z_i^2=\kappa^{-1}, \ i=1,2,\dots,n$;
\begin{equation} 
[\kappa<0, hyperbolic]:\ \ \
\begin{cases}
w_i(t)=w_i\ {\rm(constant)}\\ 
x_i(t)=x_i\ {\rm(constant)}\\
y_i(t)=\eta_i\sinh(\beta t+b_i)\\
z_i(t)=\eta_i\cosh(\beta t+b_i),\\
\end{cases}
\label{hyperbolic-summarized}
\end{equation}
with $y_i^2-z_i^2=-\eta_i^2,\ w_i^2+x_i^2-\eta_i^2=\kappa^{-1},
\ i=1,2,\dots,n$;
\begin{equation} 
[\kappa<0, elliptic{\rm-}hyperbolic]:\ \ \
\begin{cases}
w_i(t)=r_i\cos(\alpha t+a_i)\\ 
x_i(t)=r_i\sin(\alpha t+a_i)\\
y_i(t)=\eta_i\sinh(\beta t+b_i)\\
z_i(t)=\eta_i\cosh(\beta t+b_i),\\
\end{cases}
\label{elliptic-hyperbolic-summarized}
\end{equation}
with $w_i^2+x_i^2=r_i^2,\ y_i^2-z_i^2=-\eta_i^2,$ so $r_i^2-\eta_i^2=\kappa^{-1},
\ i=1,2,\dots,n$;
\begin{equation} 
[\kappa<0, parabolic]:\ \ \
\begin{cases}
w_i(t)= \alpha_i \ {\rm (constant)}\\
x_i(t)=\beta_i+(\delta_i-\gamma_i)t\\
y_i(t)=\gamma_i+\beta_i t+(\delta_i-\gamma_i)t^2/2\\
z_i(t)=\delta_i+\beta_i t+(\delta_i-\gamma_i)t^2/2,\\
\end{cases}
\label{para}
\end{equation}
with $\alpha_i^2+\beta_i^2+\gamma_i^2-\delta_i^2=\kappa^{-1}, \ i=1,2,\dots,n$.

\section{Fixed Points}\label{fixedpoints}

In this section we introduce the concept of fixed-point solution of the equations of motion, show that fixed points exist in $\mathbb S_\kappa^3$, provide a couple of examples, and finally prove that they cannot show up in $\mathbb H_\kappa^3$ and in hemispheres of $\mathbb S_\kappa^3$.

\subsection{Existence of fixed points in ${\mathbb S}_\kappa^3$}
Although the goal of this paper is to study relative equilibria of the curved $n$-body problem in 3-dimensional space, some of these orbits can be generated from fixed-point configurations by imposing on the initial positions of the bodies suitable nonzero initial velocities. It is therefore necessary to discuss fixed points as well. We start 
with their definition.

\begin{definition} 
A solution of system \eqref{Ham} is called a fixed point if it is a zero of the
vector field, i.e.\ ${\bf p}_i(t)={\nabla}_{{\bf q}_i}U_\kappa({\bf q}(t))={\bf 0}$ for all $t\in\mathbb R$, $i=1,2,\dots,n$. 
\end{definition}

In \cite{Diacu4}, \cite{Diacu1}, and \cite{Diacu001}, we showed that fixed points exist in ${\mathbb S}_\kappa^2$, but they don't exist in ${\mathbb H}_\kappa^2$. Examples of fixed points are the equilateral triangle of equal masses lying on any great circle of ${\mathbb S}_\kappa^2$ in the curved 3-body problem and the regular tetrahedron of equal masses inscribed in ${\mathbb S}_\kappa^2$ in the 4-body case. There are also examples of fixed points of non-equal masses. We showed in \cite{DiacuPol} that, for any acute triangle inscribed in a great circle of the sphere $\mathbb S_\kappa^2$, there exist masses $m_1, m_2, m_3>0$ that can be placed at the vertices of the triangle such that they form a fixed point, and therefore can generate relative equilibria in the curved 3-body problem. All these examples can be transferred to ${\mathbb S}_\kappa^3$.

\subsection{Two examples of fixed points specific to ${\mathbb S}_\kappa^3$}\label{specific-fixed}
We can construct fixed points of ${\mathbb S}_\kappa^3$ for which none of its great spheres contains them. A first simple example occurs in the 6-body problem if we
take 6 bodies of equal positive masses, place 3 of them, with zero initial velocities, at the vertices of an equilateral triangle inscribed in a great circle of a great sphere, and
place the other 3 bodies, with zero initial velocities as well, at the vertices of an
equilateral triangle inscribed in a complementary great circle (see Definition \ref{complementary}) of another great sphere. Some straightforward computations show
that 6 bodies of masses $m_1=m_2=m_3=m_4=m_5=m_6=:m>0$, with zero initial velocities and initial conditions given, for instance, by
\begin{align*}
w_1&=\kappa^{-1/2},& x_1&=0,& y_1&=0,& z_1&=0,\displaybreak[0]\\
w_2&=-\frac{\kappa^{-1/2}}{2},& x_2&=\frac{\sqrt{3}\kappa^{-1/2}}{2},& y_2&=0,& z_2&=0,\displaybreak[0]\\
w_3&=-\frac{\kappa^{-1/2}}{2},& x_3&=-\frac{\sqrt{3}\kappa^{-1/2}}{2},& y_3&=0,& z_3&=0,\displaybreak[0]\\
w_4&=0,& x_4&=0,& y_4&=\kappa^{-1/2},& z_4&=0,\displaybreak[0]\\
w_5&=0,& x_5&=0,& y_5&=-\frac{\kappa^{-1/2}}{2},& z_5&=\frac{\sqrt{3}\kappa^{-1/2}}{2},\displaybreak[0]\\
w_6&=0,& x_6&=0,& y_6&=-\frac{\kappa^{-1/2}}{2},& z_6&=-\frac{\sqrt{3}\kappa^{-1/2}}{2},
\end{align*}
form a fixed point.

The second example is inspired from the theory of regular polytopes, \cite{Coxeter}, \cite{Coxeter1}. The simplest regular polytope in $\mathbb R^4$ is the pentatope (also called 5-cell, 4-simplex, pentachrone, pentahedroid, or hyperpiramid). The pentatope has Schl\"afli symbol $\{3,3,3\}$, which translates into: 3 regular polyhedra that  have 3 regular polygons of 3 edges at every vertex (i.e.\ 3 regular tetrahedra) are attached to each of the pentatope's edges. (From the left to the right, the numbers in the Schl\"afli symbol are in the order we described them.) 

A different way to understand the pentatope is to think of it as the generalization to $\mathbb R^4$ of the equilateral triangle of $\mathbb R^2$ or of the regular tetrahedron of $\mathbb R^3$. Then the pentatope can be constructed by adding to the regular tetrahedron a fifth vertex in $\mathbb R^4$  that connects the other four vertices with edges of the same length as those of the tetrahedron. Consequently the pentatope can be inscribed in a sphere ${\mathbb S}_\kappa^3$, in which it has no antipodal vertices, so there is no danger of encoutering singular configurations for the fixed point we want to construct. Specifically, the coordinates of the 5 vertices of a pentatope inscribed in the sphere ${\mathbb S}_\kappa^3$ can be taken, for example, as
\begin{align*}
w_1&=\kappa^{-1/2},& x_1&=0,& y_1&=0,& z_1&=0,\displaybreak[0]\\
w_2&=-\frac{\kappa^{-1/2}}{4},& x_2&=\frac{\sqrt{15}\kappa^{-1/2}}{4},& y_2&=0,& z_2&=0,\displaybreak[0]\\
w_3&=-\frac{\kappa^{-1/2}}{4},& x_3&=-\frac{\sqrt{5}\kappa^{-1/2}}{4\sqrt{3}},& y_3&=\frac{\sqrt{5}\kappa^{-1/2}}{\sqrt{6}},& z_3&=0,\displaybreak[0]\\
w_4&=-\frac{\kappa^{-1/2}}{4},& x_4&=-\frac{\sqrt{5}\kappa^{-1/2}}{4\sqrt{3}},& y_4&=-\frac{\sqrt{5}\kappa^{-1/2}}{2\sqrt{6}},& z_4&=\frac{\sqrt{5}\kappa^{-1/2}}{2\sqrt{2}},\displaybreak[0]\\
w_5&=-\frac{\kappa^{-1/2}}{4},& x_5&=-\frac{\sqrt{5}\kappa^{-1/2}}{4\sqrt{3}},& y_5&=-\frac{\sqrt{5}\kappa^{-1/2}}{2\sqrt{6}},& z_5&=-\frac{\sqrt{5}\kappa^{-1/2}}{2\sqrt{2}}.
\end{align*}
 Straightforward computations show that the distance from the origin of the coordinate system to each of the 5 vertices is $\kappa^{-1/2}$ and that, for equal masses $m_1=m_2=m_3=m_4=m_5=:m>0$, this configuration produces a fixed-point solution of system \eqref{second} for $\kappa>0$. Like in the previous example, this fixed point lying at the vertices of the pentatope is specific to ${\mathbb S}_\kappa^3$ in the sense that there is no 2-dimensional sphere that contains it.
 
It is natural to ask whether other convex regular polytopes of $\mathbb R^4$ can form fixed points in ${\mathbb S}_\kappa^3$ if we place equal masses at their vertices. Apart from the pentatope, there are five other such geometrical objects: the tesseract (also called 8-cell, hypercube, or 4-cube, with 16 vertices), the orthoplex (also called 16-cell or hyperoctahedron, with 8 vertices), the octaplex (also called 24-cell or polyoctahedron, with 24 vertices), the dodecaplex (also called 120-cell, hyperdodecahedron, or polydodecahedron, with 600 vertices), and
the tetraplex (also called 600-cell, hypericosahedron, or polytetrahedron, with 120 vertices). All these polytopes, however, are centrally symmetric, so they have antipodal vertices. Therefore, if we place bodies of equal masses at their vertices, we encounter singularities. Consequently the only convex regular polytope of $\mathbb R^4$ that can form a fixed point if we place equal masses at its vertices is the pentatope.

\subsection{Nonexistence of fixed points in ${\mathbb H}_\kappa^3$ and hemispheres of ${\mathbb S}_\kappa^3$}
We will show further that there are no fixed points in ${\mathbb H}^3_\kappa$ or in any hemisphere of ${\mathbb S}_\kappa^3$. In the latter case, for fixed points not to exist it is necessary that at least one body is not on the boundary of the hemisphere.

\begin{proposition}{\bf (No fixed points in ${\mathbb H}_\kappa^3$)}
For $\kappa<0$, there are no masses $m_1,m_2,\dots,m_n>0$, $n\ge 2$, that can form a fixed point.
\label{no-fixed-hyp}
\end{proposition}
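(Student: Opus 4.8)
The plan is to exploit the fact that, on $\mathbb H_\kappa^3$, every body has a strictly positive $z$-coordinate, and to read the fixed-point condition off the $z$-component of the gradient. By definition a fixed point satisfies ${\bf p}_i={\bf 0}$ and $\nabla_{{\bf q}_i}U_\kappa({\bf q})={\bf 0}$ for all $i$, so in particular the fourth component of each gradient must vanish. Taking $\kappa<0$ (hence $\sigma=-1$ and $\odot=\boxdot$) in the reduced gradient \eqref{grad}, the condition $\nabla_{{\bf q}_k}U_\kappa={\bf 0}$ yields, for its $z$-component,
\[
\sum_{j\ne k}\frac{m_km_j|\kappa|^{3/2}\,\bigl[\,z_j-(\kappa{\bf q}_k\boxdot{\bf q}_j)\,z_k\,\bigr]}{\bigl[(\kappa{\bf q}_k\boxdot{\bf q}_j)^2-1\bigr]^{3/2}}=0 .
\]

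First I would record the two geometric inputs. On $\mathbb H_\kappa^3$ one has $z_i\ge|\kappa|^{-1/2}>0$ for every body, directly from the constraint $w_i^2+x_i^2+y_i^2-z_i^2=\kappa^{-1}$. Moreover, for any two distinct non-colliding bodies, $\kappa{\bf q}_i\boxdot{\bf q}_j>1$: this is forced by the requirement that the hyperbolic distance \eqref{hyp-distance} be real, and it can be verified directly by combining the Cauchy--Schwarz inequality on the spatial parts $(w,x,y)$ with the AM--GM inequality on the $z$-parts, equality occurring exactly in the collision case ruled out by Proposition 2. In particular each denominator $\bigl[(\kappa{\bf q}_k\boxdot{\bf q}_j)^2-1\bigr]^{3/2}$ is real and positive.

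The decisive step is to choose the index $k$ for which $z_k=\max_i z_i$. For every $j\ne k$ we then have $(\kappa{\bf q}_k\boxdot{\bf q}_j)\,z_k>z_k\ge z_j>0$, since $\kappa{\bf q}_k\boxdot{\bf q}_j>1$; consequently each numerator bracket $z_j-(\kappa{\bf q}_k\boxdot{\bf q}_j)z_k$ is strictly negative. As the masses and the denominators are all positive, the displayed sum is a sum of strictly negative terms and cannot equal zero. This contradicts the fixed-point requirement $\nabla_{{\bf q}_k}U_\kappa={\bf 0}$, so no such configuration exists.

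The only genuinely delicate point is the strict inequality $\kappa{\bf q}_i\boxdot{\bf q}_j>1$; once it is in place, the rest is a single sign comparison. It is worth noting why the argument is specific to negative curvature: it relies on all $z$-coordinates sharing the same (positive) sign, so that the body with the largest $z$ feels an unbalanced force in the timelike direction. On $\mathbb S_\kappa^3$ no coordinate is sign-constrained, no analogous extremal body can be isolated, and indeed fixed points do exist there, as shown in Subsection \ref{specific-fixed}.
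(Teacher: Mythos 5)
Your proof is correct, and it rests on the same underlying conception as the paper's -- isolate the body with the largest $z$-coordinate and show the net force on it has a nonvanishing $z$-component -- but the execution is genuinely different. The paper argues geometrically and somewhat informally: it notes that every other body attracts a maximal-$z$ body along a geodesic hyperbola whose points have lower $z$, treats bodies sharing the maximal $z$ as a separate case (the arc joining them dips below their common height), and concludes by ``composing forces.'' You instead read everything off the explicit reduced gradient \eqref{grad}: once the inequality $\kappa\,{\bf q}_i\boxdot{\bf q}_j>1$ for distinct bodies is established (your Cauchy--Schwarz plus AM--GM verification is sound, and is essentially the computation already carried out in the paper's Proposition 2 on singular configurations), the fixed-point condition on the maximal-$z$ body fails because every term $z_j-(\kappa{\bf q}_k\boxdot{\bf q}_j)z_k$ is strictly negative. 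This buys two things: first, full rigour, since the paper's talk of attraction ``towards lowering the $z$ coordinate'' is never tied back to the equations of motion, whereas your argument is a direct sign computation on the vector field; second, the tie case $z_j=z_k$ needs no separate treatment, because strictness of $\kappa{\bf q}_k\boxdot{\bf q}_j>1$ already makes the corresponding term negative. The only cost is that your route needs the inner-product inequality as a lemma, while the paper's geometric picture avoids stating it explicitly.
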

\begin{proof}
Consider $n$ bodies of masses $m_1,m_2,\dots,m_n>0$, $n\ge 2$, lying on ${\mathbb H}_\kappa^3$, in a nonsingular configuration (i.e.\ without collisions) and with zero initial velocities. Then one or more bodies, say, $m_1, m_2, \dots, m_k$ with $k\le n$,
have the largest $z$ coordinate. Consequently each of the bodies $m_{k+1},\dots, m_{n}$ will attract each of the bodies $m_1, m_2, \dots, m_k$ along a geodesic hyperbola towards lowering the $z$ coordinate of the latter. For any 2 bodies with the same largest $z$ coordinate, the segment of hyperbola connecting them has points with lower $z$ coordinates. Therefore these 2 bodies attract each other towards lowering their $z$ coordinates as well. So each of the bodies $m_1, m_2, \dots, m_k$ will move towards lowering their $z$ coordinate, therefore the initial configuration of the bodies is not fixed. 
\end{proof}

\begin{proposition}{\bf (No fixed points in hemispheres of ${\mathbb S}_\kappa^3$)}\label{hemi}
For $\kappa>0$, there are no masses $m_1, m_2,\dots, m_n>0, n\ge 2$, that can form a fixed point in any closed hemisphere of ${\mathbb S}_\kappa^3$ (i.e.\ a hemisphere that contains its boundary), as long as at least one body doesn't lie on the boundary.
\end{proposition}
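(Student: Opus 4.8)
The plan is to adapt the strategy of Proposition \ref{no-fixed-hyp}, but with care about which body to single out. By the rotational invariance of the force function $U_\kappa$ under the isometry group $SO(4)$ of ${\mathbb S}_\kappa^3$, we may assume without loss of generality that the closed hemisphere is $\{w\ge 0\}$, with boundary the great sphere ${\bf S}_{\kappa,w}^2=\{w=0\}$. Suppose, for a contradiction, that the given configuration is a fixed point. Since for a fixed point $\dot{\bf q}_i=\ddot{\bf q}_i={\bf 0}$, the equations of motion \eqref{second} force $\nabla_{{\bf q}_i}U_\kappa({\bf q})={\bf 0}$ for every $i$, so in particular every component of each gradient must vanish; we shall exhibit one body for which this fails.

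The decisive choice is to examine not the highest body but the \emph{lowest} one: let $m_i$ be a body with minimal first coordinate, $w_i=\min_k w_k\ge 0$. Writing $c_{ij}:=\kappa{\bf q}_i\odot{\bf q}_j$ (with $\sigma=1$ since $\kappa>0$), the $w$-component of the gradient \eqref{grad} is
$$[\nabla_{{\bf q}_i}U_\kappa]_w=\sum_{j\ne i}\frac{m_im_j|\kappa|^{3/2}\,(w_j-c_{ij}w_i)}{(1-c_{ij}^2)^{3/2}}.$$
Because the configuration is nonsingular, $-1<c_{ij}<1$, so every coefficient is strictly positive and each denominator is finite. The heart of the argument is the elementary inequality $w_j-c_{ij}w_i\ge w_j-w_i\ge 0$, which holds because $c_{ij}<1$ together with $w_i\ge 0$ give $c_{ij}w_i\le w_i$, while $w_j\ge w_i$ by minimality. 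Thus $[\nabla_{{\bf q}_i}U_\kappa]_w\ge 0$, and it remains only to rule out equality.

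Strict positivity I would establish in two cases. If $w_i>0$, then for every $j$ we have $w_j-c_{ij}w_i\ge w_i(1-c_{ij})>0$, so every term is strictly positive and the sum is positive. If $w_i=0$, so the lowest body sits on the boundary, each term reduces to $w_j\ge 0$, and the hypothesis that at least one body lies off the boundary supplies some $j\ne i$ with $w_j>0$; that body is not antipodal to $m_i$, since antipodality within $\{w\ge 0\}$ would force both bodies onto the boundary, so the corresponding term is a genuine strictly positive contribution and the sum is again positive. In either case $[\nabla_{{\bf q}_i}U_\kappa]_w>0$, contradicting $\nabla_{{\bf q}_i}U_\kappa={\bf 0}$, which completes the proof.

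I would flag as the main obstacle precisely the choice of body. The naive analogue of Proposition \ref{no-fixed-hyp}, which inspects the body with the \emph{largest} $w$-coordinate, fails on the sphere: a body near the pole can be drawn still closer to the pole by a body lying beyond it, since the minimizing geodesic passes over the pole, so the term $w_j-c_{ij}w_i$ can be positive even when $w_j<w_i$. Selecting the minimal-$w$ body reverses this difficulty and makes the sign of every summand cooperate; once this is recognized, verifying the inequality $w_j-c_{ij}w_i\ge w_j-w_i$ and the antipodal-exclusion remark is routine. It is also reassuring that the argument degenerates to $0$ exactly when every body lies on the boundary great sphere, which is consistent with the existence of fixed points on ${\bf S}_{\kappa,w}^2$ and explains why the ``one body off the boundary'' hypothesis is indispensable.
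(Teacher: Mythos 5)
Your proof is correct and follows essentially the same route as the paper: both anchor the argument at the body (or bodies) closest to the bounding great sphere and show that the net force on it points strictly into the hemisphere's interior, with the off-boundary hypothesis supplying the strict inequality (your minimal-$w$ body in $\{w\ge 0\}$ is exactly the paper's maximal-$z$ body in $\{z\le 0\}$). The only difference is one of rendering: you verify the sign of the boundary-normal force component algebraically from the gradient formula \eqref{grad}, whereas the paper states the same fact geometrically as attraction along geodesic arcs lowering the extremal coordinate, so your version amounts to a more explicit justification of the paper's qualitative claim.
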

\begin{proof}
The idea of the proof is similar to the idea of the proof we gave for Proposition \ref{no-fixed-hyp}. Let us assume, without loss of generality, that the bodies are in the hemisphere $z\le 0$ and they form a nonsingular initial configuration (i.e.\ without collisions or antipodal positions), with at least one of the bodies not on the boundary $z=0$, and with zero initial velocities. Then one or more bodies, say, $m_1, m_2, \dots, m_k,$ with $k<n$ (a strict inequality is essential to the proof), have the largest $z$ coordinate, which can be at most 0. Consequently the bodies $m_{k+1},\dots, m_n$ have lower $z$ coordinates. Each of the bodies $m_{k+1},\dots, m_n$ attract each of the bodies $m_1, m_2, \dots, m_k$ along a geodesic arc of a great circle towards lowering the $z$ coordinate of the latter.

The attraction between any 2 bodies among $m_1, m_2, \dots, m_k$ is either towards lowering each other's $z$ coordinate, when $z<0$ or along the geodesic $z=0$, when they are on that geodesic. In both cases, however, composing all the forces that act on each of the bodies $m_1, m_2, \dots, m_k$ will make them move towards a lower $z$ coordinate, which means that the initial configuration is not fixed.
This remark completes the proof.
\end{proof}

\newpage

\part{\rm CRITERIA AND QUALITATIVE BEHAVIOUR}

\section{Existence criteria for the relative equilibria}

In this section we establish criteria for the existence of $\kappa$-positive elliptic and elliptic-elliptic as well as $\kappa$-negative elliptic, hyperbolic, and elliptic-hyperbolic relative equilibria. These criteria will be employed in later sections to obtain concrete examples of such orbits. We close this section by showing that $\kappa$-negative parabolic relative equilibria do not exist in the curved $n$-body problem.

\subsection{Criteria for $\kappa$-positive elliptic relative equilibria}

We provide now a criterion for the existence of simply rotating $\kappa$-positive
elliptic orbits and then prove a corollary that shows under what conditions such
solutions can be generated from fixed-point configurations.

\begin{criterion}{\bf ($\kappa$-positive elliptic relative equilibria)}
Consider the point particles of masses $m_1,m_2,\dots,m_n>0,\ n\ge 2$, in ${\mathbb S}_\kappa^3$, i.e.\ for $\kappa>0$. Then system \eqref{second} admits a solution of
the form 
$$
{\bf q}=({\bf q}_1,{\bf q}_2,\dots, {\bf q}_n),\ {\bf q}_i=(w_i,x_i,y_i,z_i),\ i=1,2,\dots,n,
$$
$$
w_i(t)=r_i\cos(\alpha t+a_i), x_i(t)=r_i\sin(\alpha t+a_i), y_i(t)=y_i,
z_i(t)=z_i,
$$
with $w_i^2+x_i^2=r_i^2, r_i^2+y_i^2+z_i^2=\kappa^{-1}$, and $y_i,z_i$ constant, $i=1,2,\dots,n$,
i.e.\ a (simply rotating) $\kappa$-positive elliptic relative equilibrium, if and only if there are constants $r_i, a_i, y_i, z_i,\ i=1,2,\dots, n,$ and $\alpha\ne 0$, such that the following $4n$ conditions are satisfied:
\beq
\sum_{\stackrel{j=1}{j\ne i}}^n\frac{m_j\kappa^{3/2}(r_j\cos a_j-\kappa\nu_{ij}r_i\cos a_i)}{(1-\kappa^2\nu_{ij}^2)^{3/2}}
=(\kappa r_i^2-1)\alpha^2r_i\cos a_i,
\label{w0}
\eeq
\beq
\sum_{\stackrel{j=1}{j\ne i}}^n\frac{m_j\kappa^{3/2}(r_j\sin a_j-\kappa\nu_{ij}r_i\sin a_i)}{(1-\kappa^2\nu_{ij}^2)^{3/2}}
=(\kappa r_i^2-1)\alpha^2r_i\sin a_i, 
\label{x0}
\eeq
\beq
\sum_{\stackrel{j=1}{j\ne i}}^n\frac{m_j\kappa^{3/2}(y_j-\kappa\nu_{ij}y_i)}{(1-\kappa^2\nu_{ij}^2)^{3/2}}=\kappa\alpha^2r_i^2y_i,
\label{y0}
\eeq
\beq
\sum_{\stackrel{j=1}{j\ne i}}^n\frac{m_j\kappa^{3/2}(z_j-\kappa\nu_{ij}z_i)}{(1-\kappa^2\nu_{ij}^2)^{3/2}}=\kappa\alpha^2r_i^2z_i, 
\label{z0}
\eeq
$i=1,2,\dots,n$, where $\nu_{ij}=r_ir_j\cos(a_i-a_j)+y_iy_j+z_iz_j,\ i,j=1,2\dots,n,\ i\ne j$.
\label{cri-elliptic1}
\end{criterion}
\begin{proof}
Consider a candidate of a solution ${\bf q}$ as above for system \eqref{second}. Some straightforward computations show that
$$
\nu_{ij}:={\bf q}_i\odot{\bf q}_j=r_ir_j\cos(a_i-a_j)+y_iy_j+z_iz_j,\ i,j=1,2\dots,n,\ i\ne j,
$$
$$
\dot{\bf q}_i\odot\dot{\bf q}_i=\alpha^2r_i^2,\ i=1,2,\dots,n,
$$
$$
\ddot{w}_i=-\alpha^2r_i\cos(\alpha t+a_i), \ddot{x}_i=-\alpha^2r_i\sin(\alpha t+a_i),
$$
$$
\ddot{y}_i=\ddot{z}_i=0,\ i=1,2,\dots,n.
$$
Substituting the suggested solution and the above expressions into system \eqref{second}, for the $w$ coordinates we obtain conditions involving $\cos(\alpha t+a_i)$, whereas for the $x$ coordinates we obtain conditions involving $\sin(\alpha t+a_i)$. In the former case, using the fact that $\cos(\alpha t+a_i)=\cos\alpha t\cos a_i-\sin\alpha t\sin a_i$, we can split each equation in two, one involving $\cos\alpha t$ and the other $\sin\alpha t$ as factors. The same thing happens in the latter case if we use the formula $\sin(\alpha t+a_i)=\sin\alpha t\cos a_i+\cos\alpha t\sin a_i$. Each of these equations are satisfied if and only if conditions \eqref{w0} and \eqref{x0} take place. Conditions \eqref{y0} and \eqref{z0} follow directly from the
equations involving the coordinates $y$ and $z$. This remark completes the proof.
\end{proof}

\begin{criterion}{\bf ($\kappa$-positive elliptic relative equilibria generated from fixed-point configurations)}
Consider the point particles of masses $m_1,m_2,\dots,m_n>0, n\ge 2$,  in ${\mathbb S}_\kappa^3$, i.e.\ for $\kappa>0$. Then, for any $\alpha\ne 0$, system \eqref{second} admits a solution of
the form \eqref{elliptic1-summarized}:
$$
{\bf q}=({\bf q}_1,{\bf q}_2,\dots, {\bf q}_n), {\bf q}_i=(w_i,x_i,y_i,z_i),\ i=1,2,\dots,n,
$$
$$
w_i(t)=r_i\cos(\alpha t+a_i),\ x_i(t)=r_i\sin(\alpha t+a_i),\ y_i(t)=y_i,\ z_i(t)=z_i,
$$
with $w_i^2+x_i^2=r_i^2, r_i^2+y_i^2+z_i^2=\kappa^{-1}$, and $y_i,z_i$ constant, $i=1,2,\dots,n$, generated from a fixed point, i.e.\ a (simply rotating) $\kappa$-positive elliptic relative equilibrium generated from the same initial positions that would form a fixed point for zero initial velocities, if and only if there are constants $r_i, a_i, y_i, z_i,\ i=1,2,\dots, n,$ such that the following $4n$ conditions are satisfied: 
\beq
\sum_{\stackrel{j=1}{j\ne i}}^n\frac{m_j\kappa^{3/2}(r_j\cos a_j-\kappa\nu_{ij}r_i\cos a_i)}{(1-\kappa^2\nu_{ij}^2)^{3/2}}=0,
\label{w00}
\eeq
\beq
\sum_{\stackrel{j=1}{j\ne i}}^n\frac{m_j\kappa^{3/2}(r_j\sin a_j-\kappa\nu_{ij}r_i\sin a_i)}{(1-\kappa^2\nu_{ij}^2)^{3/2}}=0,
\label{x00}
\eeq
\beq
\sum_{\stackrel{j=1}{j\ne i}}^n\frac{m_j\kappa^{3/2}(y_j-\kappa\nu_{ij}y_i)}{(1-\kappa^2\nu_{ij}^2)^{3/2}}=0,
\label{y00}
\eeq
\beq
\sum_{\stackrel{j=1}{j\ne i}}^n\frac{m_j\kappa^{3/2}(z_j-\kappa\nu_{ij}z_i)}{(1-\kappa^2\nu_{ij}^2)^{3/2}}=0,
\label{z00}
\eeq
$i=1,2,\dots,n$, where $\nu_{ij}=r_ir_j\cos(a_i-a_j)+y_iy_j+z_iz_j,\ i,j=1,2,\dots,n,\ i\ne j$,
and one of the following two properties takes place:

(i) $r_i=\kappa^{-1/2}$ for all $i\in\{1,2\dots,n\}$,

(ii) there is a proper subset ${\mathcal I}\subset\{1,2,\dots,n\}$ such that
$r_i=0$ for all $i\in{\mathcal I}$ and $r_j=\kappa^{-1/2}$ for all $j\in\{1,2,\dots,n\}\setminus{\mathcal I}$.
\label{fixed-simply}
\end{criterion}
\begin{proof}
We are seeking a simply rotating elliptic relative equilibrium, as in Criterion \ref{cri-elliptic1}, that is valid for any $\alpha\ne 0$. But the solution is also generated from a fixed-point configuration, so the left hand sides of equations \eqref{w0}, \eqref{x0}, \eqref{y0}, and \eqref{z0} necessarily vanish, thus leading to
conditions \eqref{w00}, \eqref{x00}, \eqref{y00}, and \eqref{z00}. However, the right hand sides of equations \eqref{w0}, \eqref{x0}, \eqref{y0}, and \eqref{z0} must also vanish, so we have the $4n$ conditions:
$$
(\kappa r_i^2-1)\alpha^2r_i\cos a_i=0,\ i=1,2,\dots,n,
$$
$$
(\kappa r_i^2-1)\alpha^2r_i\sin a_i=0,\ i=1,2,\dots,n,
$$
$$
\kappa\alpha^2r_i^2y_i=0,\ i=1,2,\dots,n, 
$$
$$
\kappa\alpha^2r_i^2z_i=0,\ i=1,2,\dots,n.
$$
Since $\alpha\ne 0$ and there is no $\gamma\in\mathbb R$ such that the quantities $\sin\gamma$ and $\cos\gamma$ vanish simultaneously, the above $4n$ conditions are satisfied in each of the following cases:

(a) $r_i=0$ (consequently $w_i=x_i=0$ and $y_i^2+z_i^2=\kappa^{-1}$) for all $i\in\{1,2,\dots,n\}$,

(b) $r_i=\kappa^{-1/2}$ (consequently $w_i^2+x_i^2=\kappa^{-1}$ and $y_i=z_i=0$) for all $i\in\{1,2\dots,n\}$,

(c) there is a proper subset ${\mathcal I}\subset\{1,2,\dots,n\}$ such that
$r_i=0$ (consequently $y_i^2+z_i^2=\kappa^{-1}$) for all $i\in{\mathcal I}$
and $r_j=\kappa^{-1/2}$ (consequently $y_j=z_j=0$) for all $j\in\{1,2,\dots,n\}\setminus{\mathcal I}$.

In case (a), we recover the fixed point, so there is no rotation of any kind, therefore
this case does not lead to any simply rotating $\kappa$-positive elliptic relative
equilibrium. As we will see in Theorem \ref{theorem-fixed}, case (b), which corresponds to {\it (i)} in the above statement, and case (c), which corresponds to {\it (ii)}, lead to relative equilibria of this kind. This remark completes the proof.
\end{proof}

\subsection{Criteria for $\kappa$-positive elliptic-elliptic relative equilibria}
We can now provide a criterion for the existence of doubly rotating $\kappa$-positive
elliptic-elliptic relative equilibria and a criterion about how such orbits can be obtained from fixed-point configurations. 

\begin{criterion}{\bf ($\kappa$-positive elliptic-elliptic relative equilibria)}
Consider the bodies of masses $m_1,m_2,\dots,m_n>0, n\ge 2$, in ${\mathbb S}_\kappa^3$, i.e.\ for $\kappa>0$. Then system \eqref{second} admits a solution of
the form \eqref{elliptic-elliptic-summarized}:
$$
{\bf q}=({\bf q}_1,{\bf q}_2,\dots, {\bf q}_n), {\bf q}_i=(w_i,x_i,y_i,z_i),\ i=1,2,\dots,n,
$$
\begin{align*}
w_i(t)&=r_i\cos(\alpha t+a_i), & x_i(t)&=r_i\sin(\alpha t+a_i),\\
y_i(t)&=\rho_i\cos(\beta t+b_i), & z_i(t)&=\rho_i\sin(\beta t+b_i),
\end{align*}
with $w_i^2+x_i^2=r_i^2, y_i^2+z_i^2\rho^2, r_i^2+\rho_i^2=\kappa^{-1},\ i=1,2,\dots,n$,
i.e.\ a (doubly rotating) $\kappa$-positive elliptic-elliptic relative equilibrium, if and only if there are constants $r_i, \rho_i, a_i, b_i,\ i=1,2,\dots, n$, and $\alpha,\beta\ne 0$, such that the following $4n$ conditions are satisfied
\beq
\sum_{\stackrel{j=1}{j\ne i}}^n\frac{m_j\kappa^{3/2}(r_j\cos a_j-\kappa\omega_{ij}r_i\cos a_i)}{(1-\kappa^2\omega_{ij}^2)^{3/2}}=(\kappa\alpha^2r_i^2+\kappa\beta^2\rho_i^2-\alpha^2)r_i\cos a_i, 
\label{w1}
\eeq
\beq
\sum_{\stackrel{j=1}{j\ne i}}^n\frac{m_j\kappa^{3/2}(r_j\sin a_j-\kappa\omega_{ij}r_i\sin a_i)}
{(1-\kappa^2\omega_{ij}^2)^{3/2}}=(\kappa\alpha^2r_i^2+\kappa\beta^2\rho_i^2-\alpha^2)r_i\sin a_i, 
\label{x1}
\eeq
\beq
\sum_{\stackrel{j=1}{j\ne i}}^n\frac{m_j\kappa^{3/2}(\rho_j\cos b_j-\kappa\omega_{ij}\rho_i\cos b_i)}{(1-\kappa^2\omega_{ij}^2)^{3/2}}=(\kappa\alpha^2r_i^2+\kappa\beta^2\rho_i^2-\beta^2)\rho_i\cos b_i, 
\label{y1}
\eeq
\beq
\sum_{\stackrel{j=1}{j\ne i}}^n\frac{m_j\kappa^{3/2}(\rho_j\sin b_j-\kappa\omega_{ij}\rho_i\sin b_i)}
{(1-\kappa^2\omega_{ij}^2)^{3/2}}=(\kappa\alpha^2r_i^2+\kappa\beta^2\rho_i^2-\beta^2)\rho_i\sin b_i, 
\label{z1}
\eeq
$i=1,2,\dots,n$, where $\omega_{ij}=r_ir_j\cos(a_i-a_j)+\rho_i\rho_j\cos(b_i-b_j), \ i,j=1,2,\dots,n,\ i\ne j$, and $r_i^2+\rho_i^2=\kappa^{-1},\ i=1,2,\dots,n$. 
\end{criterion}
\begin{proof}
Consider a candidate ${\bf q}$ as above for a solution of system \eqref{second}. Some straightforward computations show that
$$
\omega_{ij}:={\bf q}_i\odot{\bf q}_j=r_ir_j\cos(a_i-a_j)+\rho_i\rho_j\cos(b_i-b_j),\ i,j=1,2\dots,n,\ i\ne j,
$$
$$
\dot{\bf q}_i\odot\dot{\bf q}_i=\alpha^2r_i^2+\beta^2\rho_i^2,\ i=1,2,\dots,n,
$$
$$
\ddot{w}_i=-\alpha^2r_i\cos(\alpha t+a_i), \ddot{x}_i=-\alpha^2r_i\sin(\alpha t+a_i),
$$
$$
\ddot{y}_i=-\beta^2\rho_i\cos(\beta t+b_i), \ddot{z}_i=-\beta^2\rho_i\sin(\beta t+b_i),\ i=1,2,\dots,n.
$$
Substituting $\bf q$ and the above expressions into system \eqref{second}, for the $w$ coordinates we obtain conditions involving $\cos(\alpha t+a_i)$, whereas for the $x$ coordinates we obtain conditions involving $\sin(\alpha t+a_i)$. In the former case, using the fact that $\cos(\alpha t+a_i)=\cos\alpha t\cos a_i-\sin\alpha t\sin a_i$, we can split each equation in two, one involving $\cos\alpha t$ and the other $\sin\alpha t$ as factors. The same thing happens in the latter case if we use the formula $\sin(\alpha t+a_i)=\sin\alpha t\cos a_i+\cos\alpha t\sin a_i$. Each of these equations are satisfied if and only if conditions \eqref{w1} and \eqref{x1} take place. 

For the $y$ coordinate, the substitution of the above solution leads to conditions involving $\cos(\beta+b_i)$, whereas for $z$ coordinate it leads to conditions involving $\sin(\beta+b_i)$. Then we proceed as we did for the $w$ and $x$ coordinates and obtain conditions 
\eqref{y1} and \eqref{z1}. This remark completes the proof.
\end{proof}

\begin{criterion}{\bf ($\kappa$-positive elliptic-elliptic relative equilibria generated from fixed-point configurations)}
Consider the point particles of masses $m_1,m_2,\dots,$ $m_n>0, n\ge 2$, in ${\mathbb S}_\kappa^3$, i.e.\ for $\kappa>0$. Then, for any $\alpha,\beta\ne 0$, system \eqref{second} admits a solution of
the form \eqref{elliptic-elliptic-summarized}:
$$
{\bf q}=({\bf q}_1,{\bf q}_2,\dots, {\bf q}_n), {\bf q}_i=(w_i,x_i,y_i,z_i),\ i=1,2,\dots,n,
$$
\begin{align*}
w_i(t)&=r_i\cos(\alpha t+a_i), & x_i(t)&=r_i\sin(\alpha t+a_i),\\
y_i(t)&=\rho_i\cos(\beta t+b_i), & z_i(t)&=\rho_i\sin(\beta t+b_i),
\end{align*}
with $w_i^2+x_i^2=r_i^2, y_i^2+z_i^2\rho^2, r_i^2+\rho_i^2=\kappa^{-1}, i\in\{1,2,\dots,n\}$, 
generated from a fixed-point configuration, i.e.\ a (doubly rotating) $\kappa$-positive elliptic-elliptic relative equilibrium generated from the same initial positions that would form a fixed point for zero initial velocities, if and only if there are constants $r_i, \rho_i, a_i, b_i,\ i=1,2,\dots,n$, such that the $4n$ relationships below are satisfied:
\beq
\sum_{\stackrel{j=1}{j\ne i}}^n\frac{m_j\kappa^{3/2}(r_j\cos a_j-\kappa\omega_{ij}r_i\cos a_i)}{(1-\kappa^2\omega_{ij}^2)^{3/2}}=0,
\label{w10}
\eeq
\beq
\sum_{\stackrel{j=1}{j\ne i}}^n\frac{m_j\kappa^{3/2}(r_j\sin a_j-\kappa\omega_{ij}r_i\sin a_i)}
{(1-\kappa^2\omega_{ij}^2)^{3/2}}=0,
\label{x10}
\eeq
\beq
\sum_{\stackrel{j=1}{j\ne i}}^n\frac{m_j\kappa^{3/2}(\rho_j\cos b_j-\kappa\omega_{ij}\rho_i\cos b_i)}{(1-\kappa^2\omega_{ij}^2)^{3/2}}=0,
\label{y10}
\eeq
\beq
\sum_{\stackrel{j=1}{j\ne i}}^n\frac{m_j\kappa^{3/2}(\rho_j\sin b_j-\kappa\omega_{ij}\rho_i\sin b_i)}
{(1-\kappa^2\omega_{ij}^2)^{3/2}}=0, 
\label{z10}
\eeq
$i=1,2,\dots,n$, where $\omega_{ij}=r_ir_j\cos(a_i-a_j)+\rho_i\rho_j\cos(b_i-b_j), \ i,j=1,2,\dots,n,\ i\ne j$, and $r_i^2+\rho_i^2=R^2=\kappa^{-1},\ i=1,2,\dots,n$,
and, additionally, one of the following properties takes place:

(i) there is a proper subset ${\mathcal J}\subset\{1,2,\dots,n\}$ such that
$r_i=0$ for all $i\in{\mathcal J}$ and $\rho_j=0$ for all $j\in\{1,2,\dots,n\}\setminus{\mathcal J}$,

(ii) the frequencies $\alpha,\beta\ne 0$ satisfy the condition $|\alpha|=|\beta|$.
\label{fixed-doubly}
\end{criterion}
\begin{proof}
A fixed-point configuration requires that the left hand sides of equations \eqref{w1}, \eqref{x1}, \eqref{y1}, and \eqref{z1} vanish, so we obtain the conditions \eqref{w10}, \eqref{x10}, \eqref{y10}, and \eqref{z10}. A relative equilibrium can be generated from a fixed-point configuration if and only if the right hand sides of \eqref{w1}, \eqref{x1}, \eqref{y1}, and \eqref{z1} vanish as well, i.e.
$$
(\kappa\alpha^2r_i^2+\kappa\beta^2\rho_i^2-\alpha^2)r_i\cos a_i=0, 
$$
$$
(\kappa\alpha^2r_i^2+\kappa\beta^2\rho_i^2-\alpha^2)r_i\sin a_i=0,
$$
$$
(\kappa\alpha^2r_i^2+\kappa\beta^2\rho_i^2-\beta^2)\rho_i\cos b_i=0,
$$
$$
(\kappa\alpha^2r_i^2+\kappa\beta^2\rho_i^2-\beta^2)\rho_i\sin b_i=0,
$$
where $r_i^2+\rho_i^2=\kappa^{-1},\ i=1,2,\dots,n$. Since there is no $\gamma\in\mathbb R$ such that $\sin\gamma$ and $\cos\gamma$ vanish simultaneously, the above expressions are zero in one of the following circumstances:

(a) $r_i=0$, and consequently $\rho_i=\kappa^{-1/2}$, for all $i\in\{1,2,\dots,n\}$,

(b) $\rho_i=0$, and consequently $r_i=\kappa^{-1/2}$, for all $i\in\{1,2,\dots,n\}$,

(c) there is a proper subset ${\mathcal J}\subset\{1,2,\dots,n\}$ such that
$r_i=0$ (consequently $\rho_i=\kappa^{-1/2}$) for all $i\in{\mathcal J}$
and $\rho_j=0$ (consequently $r_j=\kappa^{-1/2}$) for all $j\in\{1,2,\dots,n\}\setminus{\mathcal J}$,

(d) $\kappa\alpha^2r_i^2+\kappa\beta^2\rho_i^2-\alpha^2=\kappa\alpha^2r_i^2+\kappa\beta^2\rho_i^2-\beta^2=0, \ i\in\{1,2,\dots,n\}$.

Cases (a) and (b) correspond to simply rotating $\kappa$-positive relative equilibria, thus recovering condition {\it (i)} in Criterion \ref{fixed-simply}. Case (c) corresponds to {\it (i)} in the above statement. Since, from Definition \ref{elliptic-positive}, it follows that the frequencies $\alpha$ and $\beta$ are nonzero, the identities in case (d) can obviously take place only if $\alpha^2=\beta^2$, i.e.\ $|\alpha|=|\beta|\ne 0$, so (d) corresponds to condition {\it (ii)} in the above statement. This remark completes the proof.
\end{proof}

\subsection{Criterion for $\kappa$-negative elliptic relative equilibria}

We further consider the motion in $\mathbb H_\kappa^3$ and start with proving a
criterion for the existence of simply rotating $\kappa$-negative elliptic relative equilibria.

\begin{criterion}{\bf ($\kappa$-negative elliptic relative equilibria)}
Consider the point particles of masses $m_1,m_2,\dots,m_n>0, n\ge 2$, in ${\mathbb H}_\kappa^3$, i.e.\ for $\kappa<0$. Then system \eqref{second} admits solutions of
the form \eqref{elliptic2-summarized}:
$$
{\bf q}=({\bf q}_1,{\bf q}_2,\dots, {\bf q}_n), {\bf q}_i=(w_i,x_i,y_i,z_i), i=1,2,\dots,n,
$$
$$
w_i(t)=r_i\cos(\alpha t+a_i),\ x_i(t)=r_i\sin(\alpha t+a_i),\ y_i(t)=y_i,\
z_i(t)=z_i,
$$
with $w_i^2+x_i^2=r_i^2, r_i^2+y_i^2-z_i^2=\kappa^{-1}$, and $y_i,z_i$ constant, $i=1,2,\dots,n$,
i.e.\ (simply rotating) $\kappa$-negative elliptic relative equilibria, if and only if there are constants $r_i, a_i, y_i, z_i,\ i=1,2,\dots,n$, and $\alpha\ne 0$, such that the following $4n$ conditions are satisfied:
\beq
\sum_{\stackrel{j=1}{j\ne i}}^n\frac{m_j|\kappa|^{3/2}(r_j\cos a_j-\kappa\epsilon_{ij}r_i\cos a_i)}{(\kappa^2\epsilon_{ij}^2-1)^{3/2}}
=(\kappa r_i^2-1)\alpha^2r_i\cos a_i,
\label{w3}
\eeq
\beq
\sum_{\stackrel{j=1}{j\ne i}}^n\frac{m_j|\kappa|^{3/2}(r_j\sin a_j-\kappa\epsilon_{ij}r_i\sin a_i)}{(\kappa^2\epsilon_{ij}^2-1)^{3/2}}
=(\kappa r_i^2-1)\alpha^2r_i\sin a_i, 
\label{x3}
\eeq
\beq
\sum_{\stackrel{j=1}{j\ne i}}^n\frac{m_j|\kappa|^{3/2}(y_j-\kappa\epsilon_{ij}y_i)}{(\kappa^2\epsilon_{ij}^2-1)^{3/2}}=\kappa\alpha^2r_i^2y_i,
\label{y3}
\eeq
\beq
\sum_{\stackrel{j=1}{j\ne i}}^n\frac{m_j|\kappa|^{3/2}(z_j-\kappa\epsilon_{ij}z_i)}{(\kappa^2\epsilon_{ij}^2-1)^{3/2}}=\kappa\alpha^2r_i^2z_i,
\label{z3} 
\eeq
$i=1,2,\dots,n$, where $\epsilon_{ij}=r_ir_j\cos(a_i-a_j)+y_iy_j-z_iz_j,\ i,j=1,2,\dots,n,\ i\ne j$.
\label{cri-elliptic2}
\end{criterion}
\begin{proof}
Consider a candidate ${\bf q}$ as above for a solution of system \eqref{second}. Some straightforward computations show that
$$
\epsilon_{ij}:={\bf q}_i\odot{\bf q}_j=r_ir_j\cos(a_i-a_j)+y_iy_j-z_iz_j,\ i,j=1,2\dots,n,\ i\ne j,
$$
$$
\dot{\bf q}_i\odot\dot{\bf q}_i=\alpha^2r_i^2,\ i=1,2,\dots,n,
$$
$$
\ddot{w}_i=-\alpha^2r_i\cos(\alpha t+a_i),\ \ddot{x}_i=-\alpha^2r_i\sin(\alpha t+a_i),
$$
$$
\ddot{y}_i=\ddot{z}_i=0,\ i=1,2,\dots,n.
$$
Substituting $\bf q$ and the above expressions into the equations of motion \eqref{second}, for the $w$ coordinates we obtain conditions involving $\cos(\alpha t+a_i)$, whereas for the $x$ coordinates we obtain conditions involving $\sin(\alpha t+a_i)$. In the former case, using the fact that $\cos(\alpha t+a_i)=\cos\alpha t\cos a_i-\sin\alpha t\sin a_i$, we can split each equation in two, one involving $\cos\alpha t$ and the other $\sin\alpha t$ as factors. The same thing happens in the latter case if we use the formula $\sin(\alpha t+a_i)=\sin\alpha t\cos a_i+\cos\alpha t\sin a_i$. Each of these equations are satisfied if and only if conditions \eqref{w3} and \eqref{x3} take place. Conditions \eqref{y3} and \eqref{z3} follow directly from the
equations involving the coordinates $y$ and $z$. This remark completes the proof.
\end{proof}

\subsection{Criterion for $\kappa$-negative hyperbolic relative equilibria}

We continue our study of the hyperbolic space with proving a criterion that shows under
what conditions simply rotating $\kappa$-negative hyperbolic orbits exist.

\begin{criterion}{\bf ($\kappa$-negative hyperbolic relative equilibria)}
Consider the point particles of masses $m_1,m_2,\dots,m_n>0, n\ge 2$, in ${\mathbb H}_\kappa^3$, i.e.\ for $\kappa<0$. Then the equations of motion \eqref{second} admit solutions of
the form \eqref{hyperbolic-summarized}:
$$
{\bf q}=({\bf q}_1,{\bf q}_2,\dots,{\bf q}_n), {\bf q}_i=(w_i,x_i,y_i,z_i),\ i=1,2,\dots,n,
$$
\begin{align*}
w_i(t)&=w_i\ {\rm(constant)}, &  x_i(t)&=x_i\ {\rm(constant)},\\
y_i(t)&=\eta_i\sinh(\beta t+b_i), &
z_i(t)&=\eta_i\cosh(\beta t+b_i),
\end{align*}
with $y_i^2-z_i^2=-\eta_i^2,\ w_i^2+x_i^2-\eta_i^2=\kappa^{-1},
\ i=1,2,\dots,n$, i.e.\ (simply rotating) $\kappa$-negative hyperbolic relative equilibria, if and only if there are constants $\eta_i, w_i, x_i,\ i=1,2,\dots,n$, and $\beta\ne 0$, such that the following $4n$ conditions are satisfied:
\beq
\sum_{\stackrel{j=1}{j\ne i}}^n\frac{m_j|\kappa|^{3/2}(w_j-\kappa\mu_{ij}w_i)}{(\kappa^2\mu_{ij}^2-1)^{3/2}}=\kappa\beta^2\eta_i^2w_i,
\label{w4}
\eeq
\beq
\sum_{\stackrel{j=1}{j\ne i}}^n\frac{m_j|\kappa|^{3/2}(x_j-\kappa\mu_{ij}x_i)}{(\kappa^2\mu_{ij}^2-1)^{3/2}}=\kappa\beta^2\eta_i^2x_i,
\label{x4} 
\eeq
\beq
\sum_{\stackrel{j=1}{j\ne i}}^n\frac{m_j|\kappa|^{3/2}(\eta_j\sinh b_j-\kappa\mu_{ij}\eta_i\sinh b_i)}{(\kappa^2\mu_{ij}^2-1)^{3/2}}
=(\kappa\eta_i^2+1)\beta^2\eta_i\sinh b_i, 
\label{y4}
\eeq
\beq
\sum_{\stackrel{j=1}{j\ne i}}^n\frac{m_j|\kappa|^{3/2}(\eta_j\cosh b_j-\kappa\mu_{ij}\eta_i\cosh b_i)}{(\kappa^2\mu_{ij}^2-1)^{3/2}}
=(\kappa\eta_i^2+1)\beta^2\eta_i\cosh b_i, 
\label{z4}
\eeq
$i=1,2,\dots,n$, where $\mu_{ij}=w_iw_j+x_ix_j-\eta_i\eta_j\cosh(b_i-b_j),\ i,j=1,2,\dots,n,\ i\ne j$.
\label{crit-hyp}
\end{criterion}
\begin{proof}
Consider a candidate ${\bf q}$ as above for a solution of system \eqref{second}. Some straightforward computations show that 
$$
\mu_{ij}:={\bf q}_i\boxdot{\bf q}_j=w_iw_j+x_ix_j-\eta_i\eta_j\cosh(b_i-b_j),\ i,j=1,2,\dots,n,\ i\ne j,
$$
$$
\dot{\bf q}_i\boxdot\dot{\bf q}_i=\beta^2\eta_i^2,\ i=1,2,\dots, n,
$$
$$
\ddot{w}_i=\ddot{x}_i=0, 
$$
$$
\ddot{y}_i=\beta^2\eta_i\sinh(\beta t+b_i),\  \ddot{z}_i= \beta^2\eta_i\cosh(\beta i+b_i),\ i=1,2,\dots,n.
$$
Substituting $\bf q$ and the above expressions into the equations of motion \eqref{second}, we immediately obtain for the $w$ and $x$ coordinates the equations \eqref{w4} and \eqref{x4}, respectively. For the $y$ and $z$ coordinates we obtain conditions involving $\sinh(\beta t+b_i)$ and $\cosh(\beta t+b_i)$, respectively. In the former case, using the fact that $\sinh(\beta t+ b_i)=\sinh \beta t\cosh b_i+\cosh \beta t\sinh b_i$, we can split each equation in two, one involving $\sinh \beta t$ and the other $\cosh \beta t$ as factors. The same thing happens in the later case if we use the formula $\cosh(\beta t+b_i)=\cosh\beta t\cosh b_i+\sinh\beta t\sinh b_i$. Each of these conditions are satisfied if and only of conditions \eqref{y4} and \eqref{z4} take place. This remark completes the proof.
\end{proof}

\subsection{Criterion for $\kappa$-negative elliptic-hyperbolic relative equilibria}

We end our study of existence criteria for relative equilibria in hyperbolic space with a result that shows under what conditions simply rotating $\kappa$-negative elliptic-hyperbolic orbits exist.

\begin{criterion}{\bf ($\kappa$-negative elliptic-hyperbolic relative equilibria)} 
Consider the point particles of masses $m_1,m_2,\dots,m_n>0, n\ge 2$, in ${\mathbb H}_\kappa^3$, i.e.\ for $\kappa<0$. Then the equations of motion \eqref{second} admit solutions of
the form \eqref{elliptic-hyperbolic-summarized}:
$$
{\bf q}=({\bf q}_1,{\bf q}_2,\dots,{\bf q}_n), {\bf q}_i=(w_i,x_i,y_i,z_i),\ i=1,2,\dots,n,
$$
\begin{align*}
w_i(t)&=r_i\cos(\alpha t+a_i), &
x_i(t)&=r_i\sin(\alpha t+a_i),\\
y_i(t)&=\eta_i\sinh(\beta t+b_i), &
z_i(t)&=\eta_i\cosh(\beta t+b_i),
\end{align*}
i.e.\ (doubly rotating) $\kappa$-negative elliptic-hyperbolic relative equilibria, if and only if there are constants $r_i,\eta_i, a_i,b_i,\ i=1,2\dots,n$, and $\alpha,\beta\ne 0$, such that the following $4n$ conditions are satisfied:
\beq
\sum_{\stackrel{j=1}{j\ne i}}^n\frac{m_j|\kappa|^{3/2}(r_j\cos a_j-\kappa\gamma_{ij}r_i\cos a_i)}{(\kappa^2\gamma_{ij}^2-1)^{3/2}}=(\kappa\alpha^2 r_i^2+\kappa\beta^2\eta_i^2-\alpha^2)r_i\cos a_i,
\label{w5}
\eeq
\beq
\sum_{\stackrel{j=1}{j\ne i}}^n\frac{m_j|\kappa|^{3/2}(r_j\sin a_j-\kappa\gamma_{ij}r_i\sin a_i)}{(\kappa^2\gamma_{ij}^2-1)^{3/2}}=(\kappa\alpha^2 r_i^2+\kappa\beta^2\eta_i^2-\alpha^2)r_i\sin a_i,
\label{x5} 
\eeq
\beq
\sum_{\stackrel{j=1}{j\ne i}}^n\frac{m_j|\kappa|^{3/2}(\eta_j\sinh b_j-\kappa\gamma_{ij}\eta_i\sinh b_i)}{(\kappa^2\gamma_{ij}^2-1)^{3/2}}
=(\kappa\alpha^2 r_i^2+\kappa\beta^2\eta_i^2+\beta^2)\eta_i\sinh b_i, 
\label{y5}
\eeq
\beq
\sum_{\stackrel{j=1}{j\ne i}}^n\frac{m_j|\kappa|^{3/2}(\eta_j\cosh b_j-\kappa\gamma_{ij}\eta_i\cosh b_i)}{(\kappa^2\gamma_{ij}^2-1)^{3/2}}
=(\kappa\alpha^2 r_i^2+\kappa\beta^2\eta_i^2+\beta^2)\eta_i\cosh b_i, 
\label{z5}
\eeq
$i=1,2,\dots,n$, where $\gamma_{ij}=r_ir_j\cos(a_i-a_j)-\eta_i\eta_j\cosh(b_i-b_j),\ i,j=1,2,\dots,n,\ i\ne j$.
\label{crit-elliptic-hyp}
\end{criterion}
\begin{proof}
Consider a candidate ${\bf q}$ as above for a solution of system \eqref{second}. Some straightforward computations show that 
$$
\gamma_{ij}:={\bf q}_i\boxdot{\bf q}_j=r_ir_j\cos(a_i-a_j)-\eta_i\eta_j\cosh(b_i-b_j), i,j=1,2,\dots,n,\ i\ne j,
$$
$$
\dot{\bf q}_i\boxdot\dot{\bf q}_i=\alpha^2 r_i^2+\beta^2\eta_i^2,\ i=1,2,\dots, n,
$$
$$
\ddot{w}_i=-\alpha^2r_i\cos(\alpha t+a_i),\ \ddot{x}_i=-\alpha^2r_i\sin(\alpha t+a_i), 
$$
$$
\ddot{y}_i=\beta^2\eta_i\sinh(\beta t+b_i),\  \ddot{z}_i= \beta^2\eta_i\cosh(\beta i+b_i),\ i=1,2,\dots,n.
$$
Substituting these expression and those that define $\bf q$ into the equations of motion \eqref{second}, we 
obtain for the $w$ and $x$ coordinates conditions involving $\cos(\alpha t+a_i)$ and $\sin(\alpha t+a_i)$,
respectively. In the former case, using the fact that $\cos(\alpha t+a_i)=\cos\alpha t\cos a_i-\sin\alpha t\sin a_i$, we can split each equation in two, one involving $\cos\alpha t$ and the other $\sin\alpha t$ as factors. The same thing happens in the latter case if we use the formula $\sin(\alpha t+a_i)=\sin\alpha t\cos a_i+\cos\alpha t\sin a_i$. Each of these equations are satisfied if and only if conditions \eqref{w5} and \eqref{x5} take place.

For the $y$ and $z$ coordinates we obtain conditions involving $\sinh(\beta t+b_i)$ and $\cosh(\beta t+b_i)$, respectively. In the former case, using the fact that $\sinh(\beta t+ b_i)=\sinh \beta t\cosh b_i+\cosh \beta t\sinh b_i$, we can split each equation in two, one involving $\sinh \beta t$ and the other $\cosh \beta t$ as factors. The same thing happens in the later case if we use the formula $\cosh(\beta t+b_i)=\cosh\beta t\cosh b_i+\sinh\beta t\sinh b_i$. Each of these conditions are satisfied if and only of conditions \eqref{y5} and \eqref{z5} take place. This remark completes the proof.
\end{proof}

\subsection{Nonexistence of $\kappa$-negative parabolic orbits}\label{non-parabolic}

The same as in the curved $n$-body problem restricted to ${\mathbb H}_\kappa^2$, parabolic relative equilibria do not exist in ${\mathbb H}_\kappa^3$. The idea of the proof is similar to the one we used in \cite{Diacu1}: it exploits the basic fact that a relative equilibrium of parabolic type would violate the conservation law of the angular momentum. Here are a formal statement and a proof of this result.

\begin{proposition}{\bf (Nonexistence of $\kappa$-negative parabolic relative equilibria)}
Consider the point particles of masses $m_1,m_2,\dots,m_n>0, n\ge 2$, in ${\mathbb H}_\kappa^3$, i.e.\ for $\kappa<0$. Then system \eqref{second} does not admit solutions of the form \eqref{para}, which means that $\kappa$-negative parabolic relative equilibria do not exist in the 3-dimensional 
curved $n$-body problem.
\end{proposition}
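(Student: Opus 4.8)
The plan is to exploit the six angular momentum integrals \eqref{angintegrals}, following the idea indicated above and used in \cite{Diacu1}: I will show that a candidate solution of the form \eqref{para} cannot keep all components of the total angular momentum constant, and then I will convert this into a contradiction with the constraint that confines the bodies to $\mathbb H_\kappa^3$.

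First I would assume, for contradiction, that system \eqref{second} admits a $\kappa$-negative parabolic relative equilibrium, i.e.\ a solution given by \eqref{para}. Since every solution of the equations of motion conserves the total angular momentum, each scalar constant in \eqref{angintegrals} must be independent of $t$; in particular the $yz$-component $c_{yz}=\sum_{i=1}^n m_i(y_i\dot z_i-\dot y_i z_i)$ is constant. The observation that makes this component tractable is that, differentiating \eqref{para}, one finds $\dot y_i=\dot z_i=\beta_i+(\delta_i-\gamma_i)t$, while the difference $y_i-z_i=\gamma_i-\delta_i$ is independent of $t$. Hence $y_i\dot z_i-\dot y_i z_i=\dot z_i\,(y_i-z_i)=[\beta_i+(\delta_i-\gamma_i)t](\gamma_i-\delta_i)$, and summing over $i$ gives
\begin{equation*}
c_{yz}=\sum_{i=1}^n m_i\beta_i(\gamma_i-\delta_i)-t\sum_{i=1}^n m_i(\delta_i-\gamma_i)^2.
\end{equation*}

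This quantity is constant in $t$ if and only if the coefficient of $t$ vanishes, that is, $\sum_{i=1}^n m_i(\delta_i-\gamma_i)^2=0$. Because the masses are positive, this forces $\gamma_i=\delta_i$ for every $i\in\{1,2,\dots,n\}$. The contradiction then comes from the manifold constraint attached to \eqref{para}, namely $\alpha_i^2+\beta_i^2+\gamma_i^2-\delta_i^2=\kappa^{-1}$: imposing $\gamma_i=\delta_i$ reduces it to $\alpha_i^2+\beta_i^2=\kappa^{-1}$, which is impossible since the left-hand side is nonnegative whereas $\kappa^{-1}<0$ for $\kappa<0$. Therefore no solution of the form \eqref{para} can exist.

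I expect the only delicate point to be the bookkeeping that isolates a single angular-momentum component whose time dependence cannot be absorbed; the choice of the $yz$-plane is precisely what makes the computation collapse, because there $\dot y_i=\dot z_i$ forces the candidate conserved quantity to be affine in $t$ with a sign-definite slope. The remaining ingredients---the elementary derivative computation and the use of the hyperboloid constraint---are routine, and the positivity of the masses together with $\kappa<0$ yields the contradiction at once.
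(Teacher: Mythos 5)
Your proposal is correct and follows essentially the same route as the paper's own proof: both exploit conservation of the $yz$-component of the angular momentum, find that it is affine in $t$ with slope $-\sum_{i=1}^n m_i(\delta_i-\gamma_i)^2$, conclude $\gamma_i=\delta_i$ for all $i$, and then contradict the constraint $\alpha_i^2+\beta_i^2+\gamma_i^2-\delta_i^2=\kappa^{-1}<0$. Your factoring $y_i\dot z_i-\dot y_i z_i=\dot z_i(y_i-z_i)$, using $\dot y_i=\dot z_i$, is a slightly cleaner way to organize the same computation the paper carries out by expanding both products.
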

\begin{proof}
Checking a solution of the form \eqref{para} into the last integral of 
\eqref{angularmomentum}, we obtain that
$$c_{34}=\sum_{i=1}^nm_i(y_i\dot{z}_i-\dot{y}_iz_i)
=\sum_{i=1}^nm_i\left[\gamma_i+\beta_it+(\delta_i-\gamma_i)\frac{t^2}{2}\right][\beta_i+
(\delta_i-\gamma_i)t]$$
$$-\sum_{i=1}^nm_i\left[\delta_i+\beta_it+(\delta_i-\gamma_i)\frac{t^2}{2}\right][\beta_i+
(\delta_i-\gamma_i)t]$$
$$=\sum_{i=1}^nm_i\beta_i(\gamma_i-\delta_i)-
\sum_{i=1}^nm_i(\gamma_i-\delta_i)^2t,\ i\in\{1,2,\dots,n\}.$$
Since $c_{34}$ is constant, it follows that $\gamma_i=\delta_i,\ i\in\{1,2,\dots,n\}$.
But from \eqref{para} we obtain that $\alpha_i^2+\beta_i^2=\kappa^{-1}<0$, a
contradiction which proves that parabolic relative equilibria don't exist. This remark
completes the proof.
\end{proof}

\section{Qualitative behaviour of the relative equilibria
in ${\mathbb S}_\kappa^3$}

In this section we will describe some qualitative dynamical properties for the $\kappa$-positive elliptic and $\kappa$-positive elliptic-elliptic relative equilibria, under the assumption that they exist. (Examples of such solutions will be given in Sections 17 and 18 for various values of $n$ and of the masses $m_1, m_2,\dots, m_n>0$.) For this purpose we start with some geometric-topologic considerations about ${\mathbb S}_\kappa^3$. 

\subsection{Some geometric topology in ${\mathbb S}_\kappa^3$}

Consider the circle of radius $r$, in the $wx$ plane of $\mathbb R^4$ and the circle of radius $\rho$, in the $yz$ plane of $\mathbb R^4$, with $r^2+\rho^2=\kappa^{-1}$. Then ${\bf T}_{r\rho}^2$ is the cartesian product of these two circles, i.e.\ a 2-dimensional surface of genus 1, called a Clifford torus. Since these two circles are submanifolds embedded in $\mathbb R^2$, ${\bf T}_{r\rho}^2$ is embedded in $\mathbb R^4$. But ${\bf T}_{r\rho}^2$ also lies on the sphere $\mathbb S_\kappa^3$, which has radius $R=\kappa^{-1/2}$. Indeed, we can represent this torus as
\begin{equation}
{\bf T}_{r\rho}^2=\{(w,x,y,z)\ |\ r^2+\rho^2=\kappa^{-1}, 0\le\theta,\phi<2\pi\},
\label{clifford-torus}
\end{equation}
where $w=r\cos\theta,\ x=r\sin\theta,\ y=\rho\cos\phi$, and $z=\rho\sin\phi$, so the distance from the origin of the coordinate system to any point of the Clifford torus is
$$
(r^2\cos^2\theta+r^2\sin^2\theta+\rho^2\cos^2\phi+\rho^2\sin^2\phi)^{1/2}=
(r^2+\rho^2)^{1/2}=\kappa^{-1/2}=R.
$$
When $r$ (and, consequently, $\rho$) takes all the values between $0$ and 
$R$, the family of Clifford tori such defined foliates ${\mathbb S}_\kappa^3$ (see Figure \ref{cliff}). Each Clifford torus splits ${\mathbb S}_\kappa^3$ into two solid tori and forms the boundary between them. The two solid tori are congruent when $r=\rho=R/\sqrt{2}$. For the sphere ${\mathbb S}_\kappa^3$, this is the standard Heegaard splitting\footnote{A Heegaard splitting, named after the Danish mathematician Poul Heegaard (1871-1943), is a decomposition of a compact, connected, oriented 3-dimensional manifold along its boundary into two manifolds having the same genus $g$, with $g=0,1,2,\dots$} of genus 1.

\begin{figure}[htbp] 
   \centering
   \includegraphics[width=1.8in]{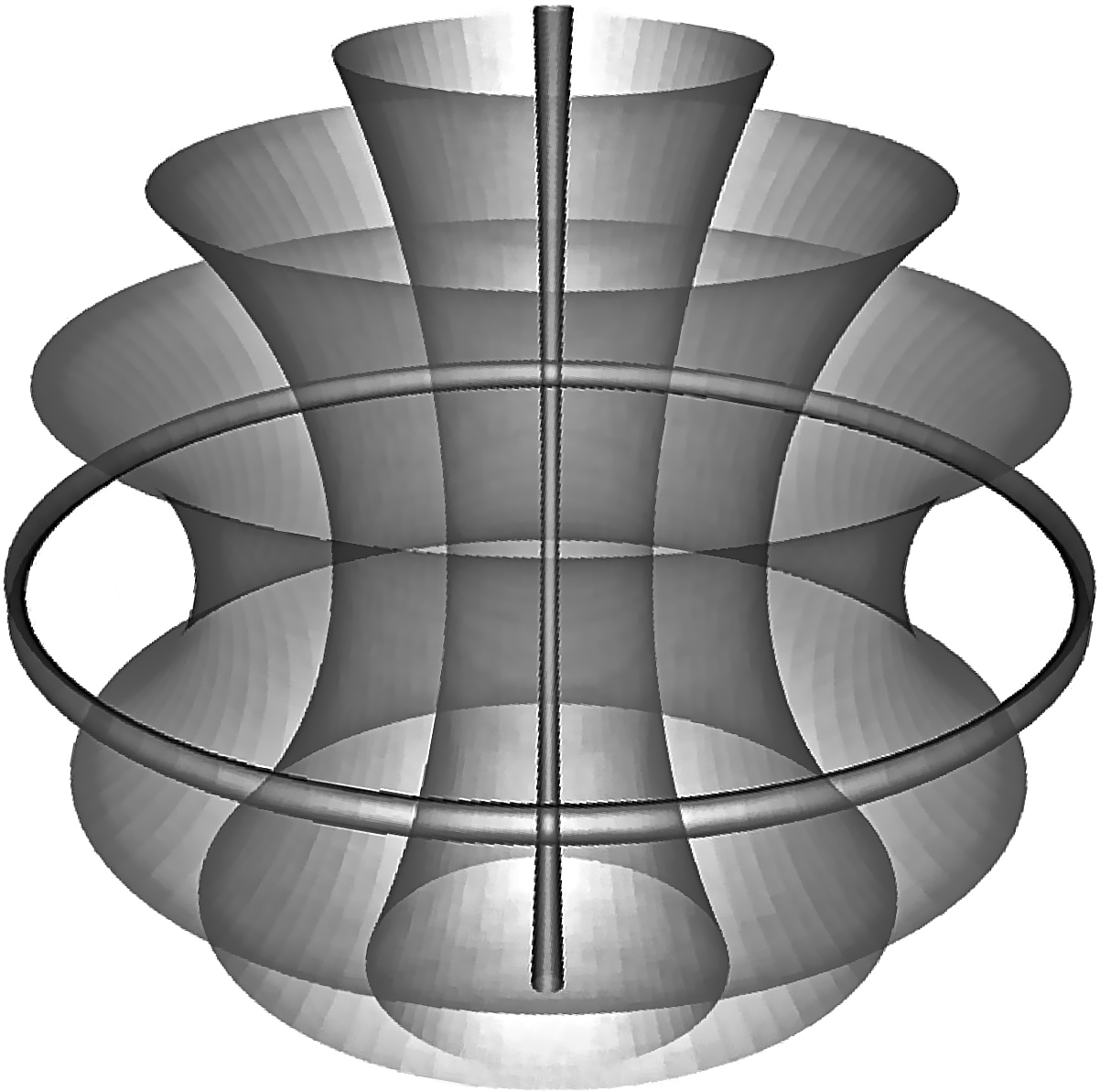}
   \caption{A 3-dimensional projection of a 4-dimensional foliation of the sphere ${\mathbb S}_\kappa^3$ into Clifford tori.}
   \label{cliff}
\end{figure}

Unlike regular tori embedded in $\mathbb R^3$, Clifford tori have zero Gaussian curvature at every point. Their flatness is due to the existence of an additional dimension in $\mathbb R^4$. Indeed, cylinders, obtained by pasting two opposite sides of a square, are flat surfaces both in $\mathbb R^3$ and $\mathbb R^4$. But to form a torus by pasting the other two sides of the square, cylinders must be stretched in $\mathbb R^3$. In $\mathbb R^4$, the extra dimension allows pasting without stretching.

\subsection{Qualitative properties of the relative equilibria in ${\mathbb S}_\kappa^3$}\label{theorem1}

The above considerations allow us to state and prove the following result, under the assumption that $\kappa$-positive elliptic and $\kappa$-positive elliptic-elliptic relative equilibria exist in ${\mathbb S}_\kappa^3$.

\begin{theorem}{\bf (Qualitative behaviour of the relative equilibria in ${\mathbb S}_\kappa^3$)} Assume that, in the 3-dimensional curved $n$-body problem, $n\ge 3$, of curvature $\kappa>0$, with bodies of masses $m_1, m_2,\dots, m_n>0$, $\kappa$-positive elliptic and $\kappa$-positive elliptic-elliptic relative equilibria exist.
Then the corresponding solution ${\bf q}$ may have one of the following dynamical behaviours:

(i) If  ${\bf q}$ is given by \eqref{elliptic1-summarized}, the orbit is a (simply rotating)
$\kappa$-positive elliptic relative equilibrium, with
the body of mass $m_i$ moving on a (not necessarily geodesic) circle ${\mathcal C}_i, i=1,2,\dots,n,$ of a 2-dimensional sphere in ${\mathbb S}_\kappa^3$; in the hyperplanes $wxy$ and $wxz$, the circles ${\mathcal C}_i$ are parallel with the plane $wx$; another possibility is that some bodies rotate on a great circle of a great sphere, while the other bodies stay fixed on a complementary great circle of another great sphere.

(ii)  If ${\bf q}$ is given by \eqref{elliptic-elliptic-summarized}, the orbit is a (doubly rotating) $\kappa$-positive elliptic-elliptic relative equilibrium, with 
some bodies rotating on a great circle of a great sphere and the other bodies rotating on a complementary great circle of another great sphere; another possibility is that each body $m_i$ is moving on the Clifford torus ${\bf T}^2_{r_i\rho_i}, i=1,2,\dots,n$. 
\label{circle-Clifford}
\end{theorem}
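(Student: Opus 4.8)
The plan is to read the geometry off the two closed forms \eqref{elliptic1-summarized} and \eqref{elliptic-elliptic-summarized} directly: since existence is hypothesised, nothing needs to be solved, and the entire argument is one of geometric interpretation of the coordinate expressions, together with a bookkeeping of the degenerate configurations already isolated in Criteria \ref{fixed-simply} and \ref{fixed-doubly}. I would treat the two parts in turn.

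For part \emph{(i)} I would start from \eqref{elliptic1-summarized} and note that, for each $i$, the coordinates $y_i$ and $z_i$ are constant while $(w_i(t),x_i(t))$ traverses the circle of radius $r_i$ in the $wx$-plane. Hence the trajectory of $m_i$ is
$$
{\mathcal C}_i=\{(r_i\cos\phi,\ r_i\sin\phi,\ y_i,\ z_i)\ :\ 0\le\phi<2\pi\},
$$
which lies in the affine $2$-plane $\{y=y_i,\ z=z_i\}$ and is therefore parallel to the $wx$-plane; restricting to the hyperplane $z=0$ (i.e.\ $wxy$) or $y=0$ (i.e.\ $wxz$) gives exactly the asserted parallelism. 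To exhibit a $2$-sphere carrying ${\mathcal C}_i$, I would intersect ${\mathbb S}_\kappa^3$ with the hyperplane through the origin $\{z_iy-y_iz=0\}$ (assuming $(y_i,z_i)\ne(0,0)$): this is a great sphere, and every point of ${\mathcal C}_i$ satisfies both $z_iy_i-y_iz_i=0$ and $r_i^2+y_i^2+z_i^2=\kappa^{-1}$, so ${\mathcal C}_i$ lies on it, as a circle of radius $r_i\le\kappa^{-1/2}$ that is geodesic precisely when $r_i=\kappa^{-1/2}$. The second alternative in \emph{(i)} is then the degenerate configuration of case \emph{(ii)} of Criterion \ref{fixed-simply}: the bodies with $r_i=0$ have $w_i=x_i\equiv0$ and $y_i^2+z_i^2=\kappa^{-1}$, hence stay fixed on the great circle ${\bf S}_{\kappa,wx}^1$ lying in the $yz$-plane, while the bodies with $r_j=\kappa^{-1/2}$ have $y_j=z_j\equiv0$ and rotate with frequency $\alpha\ne0$ on ${\bf S}_{\kappa,yz}^1$ in the $wx$-plane, and by Definition \ref{complementary} these two great circles are complementary.

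For part \emph{(ii)} I would start from \eqref{elliptic-elliptic-summarized} and observe that the trajectory of $m_i$ is
$$
\{(r_i\cos(\alpha t+a_i),\ r_i\sin(\alpha t+a_i),\ \rho_i\cos(\beta t+b_i),\ \rho_i\sin(\beta t+b_i))\},
$$
which, matched against the parametrisation in \eqref{clifford-torus} and using $r_i^2+\rho_i^2=\kappa^{-1}$, lies on the Clifford torus ${\bf T}^2_{r_i\rho_i}$; when $r_i,\rho_i>0$ this is a genuine torus and otherwise it degenerates to a great circle. The two stated possibilities separate exactly as in Criterion \ref{fixed-doubly}: generically each $m_i$ sweeps its own ${\bf T}^2_{r_i\rho_i}$, whereas in case \emph{(i)} of that criterion the subset with $r_i=0$ rotates with frequency $\beta$ on ${\bf S}_{\kappa,wx}^1$ (in the $yz$-plane) and the complementary subset with $\rho_j=0$ rotates with frequency $\alpha$ on the complementary great circle ${\bf S}_{\kappa,yz}^1$ (in the $wx$-plane).

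The main obstacle is organisational rather than computational: one must verify that the descriptions offered in each part genuinely exhaust the geometric possibilities permitted by the solution forms, and, in particular, that the degenerate ``some fixed (or doubly rotating) on complementary circles'' scenarios are correctly matched to the sub-cases $r_i=0$, $r_j=\kappa^{-1/2}$ (resp.\ $r_i=0$, $\rho_j=0$) identified in Criteria \ref{fixed-simply} and \ref{fixed-doubly}. The verifications that a given trajectory is a circle, that it sits on the exhibited great sphere, and that the doubly rotating trajectory lies on the named Clifford torus are all immediate from the constraints $r_i^2+y_i^2+z_i^2=\kappa^{-1}$ and $r_i^2+\rho_i^2=\kappa^{-1}$.
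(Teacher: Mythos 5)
Your proposal is correct and covers every assertion of Theorem \ref{circle-Clifford}, but for the parallelism claim in part (i) you take a genuinely different---and shorter---route than the paper. The paper first computes all six angular momentum constants \eqref{angularmomentum} along the solution, evaluates them at $t=0$ and $t=\pi/\alpha$ to conclude $c_{wy}=c_{wz}=c_{xy}=c_{xz}=c_{yz}=0$, and then proves parallelism by contradiction: a non-parallel circle would project onto an ellipse (rather than a segment) in some base plane such as $xy$, all bodies' contributions to $c_{xy}$ would then have the same sign, forcing $c_{xy}\neq 0$, a contradiction. You instead read the parallelism directly off \eqref{elliptic1-summarized}: since $y_i$ and $z_i$ are constants, ${\mathcal C}_i$ lies in the affine $2$-plane $\{y=y_i,\ z=z_i\}$, so its projections into the hyperplanes $wxy$ and $wxz$ are trivially parallel to the plane $wx$. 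Both arguments are valid; yours is more elementary and avoids the detour through the first integrals, while the paper's computation yields the extra dynamical information (only $c_{wx}$ can be nonzero) that it reuses when presenting the examples in Part 5. You also go beyond the paper in one respect: the paper never actually exhibits the $2$-dimensional sphere containing ${\mathcal C}_i$, whereas you produce the great sphere ${\mathbb S}_\kappa^3\cap\{z_iy-y_iz=0\}$ explicitly (modulo the small slip where you wrote the tautology $z_iy_i-y_iz_i=0$ instead of saying that the points of ${\mathcal C}_i$ satisfy the defining equation $z_iy-y_iz=0$). Your handling of the remaining claims---the fixed-plus-rotating alternative in (i) via the dichotomy $r_i=0$ versus $r_j=\kappa^{-1/2}$ of Criterion \ref{fixed-simply}, the complementary-circles alternative in (ii) via case (i) of Criterion \ref{fixed-doubly}, and the Clifford torus claim by matching \eqref{elliptic-elliptic-summarized} against the parametrization \eqref{clifford-torus}---follows essentially the same path as the paper's own proof.
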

\begin{proof}
(i) The bodies move on circles ${\mathcal C}_i, \ i=1,2,\dots,n$, because, by
\eqref{elliptic1-summarized}, the analytic expression of the orbit is given by 
$$
{\bf q}=({\bf q}_1,{\bf q}_2,\dots,{\bf q}_n), \ {\bf q}_i=(w_i,x_i,y_i,z_i),\ i=1,2,\dots,n,
$$
$$
w_i(t)=r_i\cos(\alpha t+a_i), x_i(t)=r_i\sin(\alpha t+a_i), y_i(t)=y_i,
z_i(t)=z_i,
$$
with $w_i^2+x_i^2=r_i^2, r_i^2+y_i^2+z_i^2=\kappa^{-1}$, and $y_i,z_i$ constant, $i=1,2,\dots,n$. This proves the first part of (i), except for the statements about parallelism. 

In particular, if some bodies lie on the circle 
$$
{\bf S}_{\kappa,wx}^1=\{(0,0,y,z)\ |\ y^2+z^2=\kappa^{-1}\},
$$
with $y_i(t)=y_i=$ constant and $z_i(t)=z_i=$ constant, 
then the elliptic rotation, which changes the coordinates $w$ and $x$, does not act on them, therefore the bodies don't move. This remark proves the second part of statement (i). 

To prove the parallelism statement from the first part of (i), let us first remark that, as the concept of two parallel lines makes sense only if the lines are contained in the same plane, the concept of two parallel planes has meaning only if the planes are contained in the same 3-dimensional space. This explains our formulation of the statement. Towards proving it, notice first that 
$$
c_{wx}=\sum_{i=1}^n m_i(w_i\dot{x}_i-\dot{w}_ix_i)=\alpha\sum_{i=1}^n m_i r_i^2
$$
and
$$
c_{yz}=\sum_{i=1}^n m_i(y_i\dot{z}_i-\dot{y}_iz_i)=0.
$$
These constants are independent of the bodies' position, a fact that confirms that they result from first integrals. To determine the values of the constants $c_{wy}, c_{wz}, c_{xy},$ and $c_{xz}$, we first compute that
$$
c_{wy}=\sum_{i=1}^n m_i(w_i\dot{y}_i-\dot{w}_iy_i)=\alpha\sum_{i=1}^n m_ir_iy_i\sin(\alpha t+a_i),
$$
$$
c_{wz}=\sum_{i=1}^n m_i(w_i\dot{z}_i-\dot{w}_iz_i)=\alpha\sum_{i=1}^n m_ir_iz_i\sin(\alpha t+a_i),
$$
$$
c_{xy}=\sum_{i=1}^n m_i(x_i\dot{y}_i-\dot{x}_iy_i)=\alpha\sum_{i=1}^n m_ir_iy_i\cos(\alpha t+a_i),
$$
$$
c_{xz}=\sum_{i=1}^n m_i(x_i\dot{z}_i-\dot{x}_iz_i)=\alpha\sum_{i=1}^n m_ir_iz_i\cos(\alpha t+a_i).
$$
Since they are constant, the first integrals must take the same value for the arguments $t=0$ and $t=\pi/\alpha$. But at $t=0$, we obtain
$$
c_{wy}=\alpha\sum_{i=1}^n m_ir_iy_i\sin a_i,\ \ \ c_{wy}=\alpha\sum_{i=1}^n m_ir_iz_i\sin a_i,
$$
$$
c_{xy}=\alpha\sum_{i=1}^n m_ir_iy_i\cos a_i,\ \ \ c_{xz}=\alpha\sum_{i=1}^n m_ir_iz_i\cos a_i,
$$
whereas at $t=\pi/\alpha$, we obtain
$$
c_{wy}=-\alpha\sum_{i=1}^n m_ir_iy_i\sin a_i,\ \ \ c_{wy}=-\alpha\sum_{i=1}^n m_ir_iz_i\sin a_i,
$$
$$
c_{xy}=-\alpha\sum_{i=1}^n m_ir_iy_i\cos a_i,\ \ \ c_{xz}=-\alpha\sum_{i=1}^n m_ir_iz_i\cos a_i.
$$
Consequently, $c_{wy}=c_{wz}=c_{xy}=c_{xz}=0$. Since, as we already showed, $c_{yz}=0$, it follows that the only nonzero constant of the total angular momentum is $c_{wz}$. This means that the particle system has nonzero total rotation with respect to the origin only in the $wx$ plane.   

To prove that the circles ${\mathcal C}_i, i=1,2,\dots,n,$ are parallel with the plane $wx$ in the hyperplanes $wxy$ and $wxz$, assume that one circle, say ${\mathcal C}_1$, does not satisfy this property. Then some orthogonal projection of ${\mathcal C}_1$ (within
either of the hyperplanes $wxy$ and $wxz$) in at least one of the other base planes, say $xy$, is an ellipse, not a segment---as it would be otherwise. Then the angular momentum of the body of mass $m_1$ relative to the plane $xy$ is nonzero. Should other circles have an
elliptic projection in the plane $xy$, the angular momentum of the corresponding
bodies would be nonzero as well. Moreover, all angular momenta would have the same sign because
all bodies move in the same direction on the original circles. Consequently $c_{xy}\ne 0$, in contradiction with our previous findings. Therefore the circles ${\mathcal C}_i, i=1,2,\dots,n$, must be parallel, as stated. 

(ii) When a $\kappa$-positive elliptic-elliptic (double) rotation acts on a system, if some bodies are on a great circle of a great sphere of ${\mathbb S}_\kappa^3$, while other are on a complementary great circle of another great sphere, then the former bodies move only because of one rotation, while the latter bodies move only because of the other rotation. The special geometric properties of complementary circles leads to this kind of qualitative behaviour.

To prove the other kind of qualitative behaviour, namely that the body of mass $m_i$ of the doubly rotating $\kappa$-positive elliptic-elliptic relative equilibrium moves on the Clifford torus ${\bf T}^2_{r_i\rho_i}, i=1,2,\dots,n$, it is enough to compare the form of the orbit given in \eqref{elliptic-elliptic-summarized} with the characterization \eqref{clifford-torus} of a Clifford torus.
This remark completes the proof.
\end{proof}

\subsection{Qualitative properties of the relative equilibria generated from fixed-point configurations in ${\mathbb S}_\kappa^3$} \label{theorem2}

We will further outline the dynamical consequences of Criterion \ref{fixed-simply} and Criterion \ref{fixed-doubly}, under the assumption that $\kappa$-positive elliptic and $\kappa$-positive elliptic-elliptic relative equilibria, both generated from fixed-point configurations, exist in ${\mathbb S}_\kappa^3$. This theorem deals with a subclass of the orbits whose qualitative behaviour we have just described.

\begin{theorem}{\bf (Qualitative behaviour of the relative equilibria generated from fixed-point configurations in ${\mathbb S}_\kappa^3$)} Consider in ${\mathbb S}_\kappa^3$, i.e.\ for $\kappa>0$, the point particles of masses $m_1, m_2, \dots, m_n>0, n\ge 2$. Then a relative equilibrium ${\bf q}$ generated from a fixed point configuration may have one of the following characteristics:

(i) $\bf q$ is a (simply rotating) $\kappa$-positive elliptic orbit for which all bodies rotate on the same great circle of a great sphere of ${\mathbb S}_\kappa^3$;

(ii) $\bf q$ is a (simply rotating) $\kappa$-positive elliptic orbit for which some bodies rotate on a great circle of a great sphere, while the other bodies are fixed on a complementary great circle of a different great sphere;

(iii) $\bf q$ is a (doubly rotating) $\kappa$-positive elliptic-elliptic orbit for which some bodies rotate with frequency $\alpha\ne 0$ on a great circle of a great sphere, while the other bodies rotate with frequency $\beta\ne 0$ on a complementary great circle of a different sphere; the frequencies may be different in size, i.e.\ $|\alpha|\ne|\beta|$;

(iv) $\bf q$ is a (doubly rotating) $\kappa$-positive elliptic-elliptic orbit with frequencies
$\alpha,\beta\ne 0$ equal in size, i.e.\ $|\alpha|=|\beta|$.
\label{theorem-fixed}
\end{theorem}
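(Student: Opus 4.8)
The plan is to reduce the statement to the two fixed-point existence criteria already established, Criterion \ref{fixed-simply} and Criterion \ref{fixed-doubly}, and then to convert their algebraic alternatives into the geometric language of great circles and complementary great circles. First I would recall from Section \ref{isometries} that the only isometric rotations of ${\mathbb S}_\kappa^3$ are the $\kappa$-positive elliptic and the $\kappa$-positive elliptic-elliptic ones; hence any relative equilibrium ${\bf q}$ in ${\mathbb S}_\kappa^3$ is of the form \eqref{elliptic1-summarized} (simply rotating) or \eqref{elliptic-elliptic-summarized} (doubly rotating), and the proof naturally splits into these two cases.

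For the simply rotating case, the hypothesis that ${\bf q}$ is generated from a fixed point places it under Criterion \ref{fixed-simply}, so its alternatives (i) and (ii) are all I need to interpret. If $r_i=\kappa^{-1/2}$ for every $i$, then the constraint $r_i^2+y_i^2+z_i^2=\kappa^{-1}$ forces $y_i=z_i=0$, so all bodies sit on the great circle ${\bf S}_{\kappa,yz}^1$ (a great circle of the great sphere ${\bf S}_{\kappa,z}^2$) and the rotation ${\mathcal A}$ carries them around it: this is characteristic (i). If instead a proper subset has $r_i=0$, so that $w_i=x_i=0$ and those bodies lie on ${\bf S}_{\kappa,wx}^1$, while the remaining bodies have $r_j=\kappa^{-1/2}$ and lie on ${\bf S}_{\kappa,yz}^1$, then because ${\mathcal A}$ acts only on the $w,x$ coordinates the first group stays fixed and the second rotates; since ${\bf S}_{\kappa,wx}^1$ and ${\bf S}_{\kappa,yz}^1$ are complementary in the sense of Definition \ref{complementary}, this is characteristic (ii).

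For the doubly rotating case I would apply Criterion \ref{fixed-doubly}. Its alternative (i) furnishes a proper subset ${\mathcal J}$ on which $r_i=0$ (hence $\rho_i=\kappa^{-1/2}$, placing those bodies on ${\bf S}_{\kappa,wx}^1$, where the $yz$-block of ${\mathcal B}$ rotates them with frequency $\beta$), together with $\rho_j=0$ on the complement (hence $r_j=\kappa^{-1/2}$, placing the others on ${\bf S}_{\kappa,yz}^1$, where the $wx$-block rotates them with frequency $\alpha$); the two circles are again complementary and no relation between $|\alpha|$ and $|\beta|$ is imposed, which is exactly characteristic (iii). Its alternative (ii) is the condition $|\alpha|=|\beta|$, yielding characteristic (iv), for which Theorem \ref{circle-Clifford}(ii) already shows that each body travels on its Clifford torus ${\bf T}_{r_i\rho_i}^2$. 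I would also point out that the leftover sub-alternatives internal to the two criteria (all $r_i=0$, or $r_i\equiv\kappa^{-1/2}$, or $\rho_i\equiv\kappa^{-1/2}$) either recover a genuine fixed point, which is excluded by hypothesis, or collapse to the simply rotating situation already treated, so (i)--(iv) exhaust the possibilities.

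The work here is organizational rather than computational, since the two criteria already carry the entire analytic burden. Accordingly, the main obstacle I anticipate is making the exhaustiveness airtight: I must verify that every sub-case produced inside Criteria \ref{fixed-simply} and \ref{fixed-doubly} is either one of (i)--(iv) or a degenerate possibility that is legitimately discarded, and that the identification of $r_i=\kappa^{-1/2}$ with ${\bf S}_{\kappa,yz}^1$ and of $r_i=0$ with ${\bf S}_{\kappa,wx}^1$ really produces a \emph{complementary} pair in the precise sense of Definition \ref{complementary}, not merely two great circles. A secondary point needing care is the assertion in (iii) that the two rotating groups are dynamically decoupled; this follows from the block-diagonal form of ${\mathcal B}$ together with the constancy of the distance between bodies lying on complementary great circles, which was established in the discussion of the metric.
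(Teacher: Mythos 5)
Your proposal is correct and takes essentially the same route as the paper's own proof: both reduce the theorem to Criteria \ref{fixed-simply} and \ref{fixed-doubly} and translate their algebraic alternatives ($r_i=\kappa^{-1/2}$ for all $i$; a proper subset with $r_i=0$; a proper subset with $r_i=0$ and complement with $\rho_j=0$; $|\alpha|=|\beta|$) into the geometry of the complementary great circles ${\bf S}_{\kappa,wx}^1$ and ${\bf S}_{\kappa,yz}^1$, exactly as the paper does case by case. Your extra remarks on exhaustiveness and on discarding the degenerate sub-cases (all $r_i=0$ recovering a fixed point, etc.) are sound and merely make explicit what the paper leaves implicit.
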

\begin{proof}
(i) From conclusion (i) of Criterion \ref{fixed-simply}, a (simply rotating) $\kappa$-positive elliptic relative equilibrium of the form 
$$
{\bf q}=({\bf q}_1,{\bf q}_2,\dots,{\bf q}_n),
{\bf q}_i=(w_i,x_i,y_i,z_i), \ i=1,2,\dots, n,
$$
$$
w_i=r_i\cos(\alpha t+a_i), x_i(t)=r_i\sin(\alpha t+a_i), y_i(t)=y_i, z_i(t)=z_i,
$$
with $w_i^2+x_i^2=r_i^2, r_i^2+y_i^2+z_i^2=\kappa^{-1}$ and $y_i,z_i$ constant, $i=1,2,\dots,n$, generated from a fixed-point configuration, must satisfy one of two additional conditions (besides the initial $4n$ equations), the first of which translates into 
$$
r_i=\kappa^{-1/2}, \ i=1,2,\dots, n.
$$
This property implies that $y_i=z_i=0,\ i=1,2,\dots, n$, so all bodies rotate along the same great circle of radius $\kappa^{-1/2}$, namely ${\bf S}_{\kappa,yz}^1$ thus proving the statement in this case.

(ii) From conclusion (ii) of Criterion \ref{fixed-simply}, there is a proper subset ${\mathcal I}\subset\{1,2,\dots,n\}$ such that $r_i=0$ for all $i\in{\mathcal I}$ and $r_j=\kappa^{-1/2}$ for all $j\in\{1,2,\dots,n\}\setminus{\mathcal I}$. 

The bodies for which $r_i=0$ must have $w_i=x_i=0$ and $y_i^2+z_i^2=\kappa^{-1}$, so they are fixed on the great circle ${\bf S}_{\kappa,wx}^1$, since $y_i$ and $z_i$ are constant, $i\in{\mathcal I}$, and no rotation acts on the coordinates $w$ and $x$.

As in the proof of (i) above, it follows that the bodies with $r_j=\kappa^{-1/2},\ j\in\{1,2,\dots,n\}\setminus{\mathcal I}$ rotate on the circle ${\bf S}_{\kappa,yz}^1$, which is complementary to ${\bf S}_{\kappa,wx}^1$ so statement (ii) is also proved. 

(iii) From conclusion (i) of Criterion \ref{fixed-doubly}, a (doubly rotating) $\kappa$-positive elliptic-elliptic relative equilibrium of the form
$$
{\bf q}=({\bf q}_1,{\bf q}_2,\dots, {\bf q}_n), {\bf q}_i=(w_i,x_i,y_i,z_i),\ i=1,2,\dots,n,
$$
\begin{align*}
w_i(t)&=r_i\cos(\alpha t+a_i), & x_i(t)&=r_i\sin(\alpha t+a_i),\\
y_i(t)&=\rho_i\cos(\beta t+b_i), & z_i(t)&=\rho_i\sin(\beta t+b_i),
\end{align*}
with $w_i^2+x_i^2=r_i^2,\ y_i^2+z_i^2=\rho_i^2,\ r_i^2+\rho_i^2=\kappa^{-1}, \ i=1,2,\dots,n$, 
generated from a fixed-point configuration, must satisfy one of two additional conditions (besides the initial $4n$ equations), the first of which says that there is a proper subset ${\mathcal J}\subset\{1,2,\dots,n\}$ such that $r_i=0$ for all $i\in{\mathcal J}$ and $\rho_j=0$ for all $j\in\{1,2,\dots,n\}\setminus{\mathcal J}$. But this means that the bodies $m_i$ with $i\in\mathcal J$ have $w_i=x_i=0$ and $y_i^2+z_i^2=\rho_i^2$, so one rotation acts along the great circle ${\bf S}_{\kappa,wx}^1$, while the bodies with $m_i, i\in\{1,2,\dots,n\}\setminus{\mathcal J}$ satisfy the conditions $w_i^2+x_i^2=r_i^2$ and $y_i=z_i=0$, so the other rotation acts on them along the great circle  ${\bf S}_{\kappa,yz}^1$, which is complementary to ${\bf S}_{\kappa,wx}^1$. Moreover, since the bodies are distributed on two complementary circles, there are no constraints on the frequencies $\alpha,\beta\ne 0$, so they can be independent of each other, a remark that proves the statement.

(iv) From statement (d) in the proof of Criterion \ref{fixed-doubly}, a (doubly rotating) $\kappa$-positive elliptic-elliptic relative equilibrium may exist also when the bodies are not necessarily on complementary circles but the frequencies satisfy the condition $|\alpha|=|\beta|$, a case that concludes the last statement of this result. 
\end{proof}

\section{Qualitative behaviour of the relative equilibria
in ${\mathbb H}_\kappa^3$}

In this section we will describe some qualitative dynamical properties of the $\kappa$-negative elliptic, hyperbolic, and elliptic-hyperbolic relative equilibria, under the assumption that they exist. (Examples of such solutions, which prove their existence, will be given in Sections 19, 20, and 21.) For this purpose we start with some geometric-topologic considerations about ${\mathbb H}_\kappa^3$.

\subsection{Some geometric topology in ${\mathbb H}_\kappa^3$}

Usually, compact higher-dimensional manifolds have a richer geometry than non-compact manifolds of the same dimension. This is also true about ${\mathbb S}_\kappa^3$ if compared to ${\mathbb H}_\kappa^3$. Nevertheless, we will be able to characterize the relative equilibria of ${\mathbb H}_\kappa^3$ in geometric-topologic terms.

The surface we are introducing in this section, which will play for our dynamical analysis in ${\mathbb H}_\kappa^3$ the same role as the Clifford torus in ${\mathbb S}_\kappa^3$, is homoeomorphic to a cylinder.
Consider a circle of radius $r$ in the $wx$ plane of $\mathbb R^4$ and the upper branch of the hyperbola $r^2-\eta^2=\kappa^{-1}$ in the $yz$ plane of $\mathbb R^4$. Then we will call the surface ${\bf C}^2_{r\eta}$ obtained by taking the cartesian product between the circle and the hyperbola a hyperbolic cylinder since it surrounds equidistantly a branch of a geodesic hyperbola in ${\mathbb H}_\kappa^3$. Indeed, we can represent this cylinder as
\begin{equation}
{\bf C}^2_{r\eta}=\{(w, x, y, z)\ |\
r^2-\eta^2=\kappa^{-1}, \ 0\le\theta<2\pi, \ \xi\in\mathbb R\},
\label{hyp-cylinder}
\end{equation}
where $w=r\cos\theta,\ x=r\sin\theta,\ y=\eta\sinh\xi,\ z=\eta\cosh\xi$. 
But the hyperbolic cylinder ${\bf C}^2_{r\eta}$ also lies in ${\mathbb H}_\kappa^3$ because the coordinates $w, x, y, z$, endowed with the Lorentz metric, satisfy the equation
$$
w^2+x^2+y^2-z^2=r^2-\eta^2=\kappa^{-1}.
$$
As in the case of ${\mathbb S}_\kappa^3$, which is foliated by a family of Clifford tori,
${\mathbb H}_\kappa^3$ can be foliated by a family of hyperbolic cylinders. The foliation is, of course, not unique. But unlike the Clifford tori of $\mathbb R^4$, the hyperbolic cylinders of $\mathbb R^{3,1}$ are not flat surfaces. In general, they have constant positive curvature, which varies with the size of the cylinder, becoming zero only when the cylinder degenerates into a geodesic hyperbola.

\subsection{Qualitative properties of the relative equilibria in ${\mathbb H}_\kappa^3$}

The above considerations allow us to state and prove the following result, under the assumption that $\kappa$-negative elliptic, hyperbolic, and elliptic-hyperbolic relative equilibria exist. Notice that, on one hand, due to the absence of complementary circles, and, on the other hand, the absence of fixed points in ${\mathbb H}_\kappa^3$, the dynamical behaviour of relative equilibria is less complicated than in ${\mathbb S}_\kappa^3$.

\begin{theorem}{\bf (Qualitative behaviour of the relative equilibria in ${\mathbb H}_\kappa^3$)} In the 3-dimensional curved $n$-body problem, $n\ge 3$, of curvature 
$\kappa<0$, with bodies of masses $m_1, m_2,\dots, m_n>0$, every relative equilibrium $\bf q$ has one of the following potential behaviours:

(i) if $\bf q$ is given by \eqref{elliptic2-summarized}, the orbit is a (simply rotating) $\kappa$-negative elliptic relative equilibrium, with the body of mass $m_i$ moving on a circle ${\mathcal C}^i,\  i=1,2,\dots,n$, of a 2-dimensional hyperboloid in ${\mathbb H}_\kappa^3$; in  the hyperplanes $wxy$ and $wxz$, the planes of the circles ${\mathcal C}^i$ are parallel with the plane $wx$;

(ii) if $\bf q$ is given by \eqref{hyperbolic-summarized}, the orbit is a (simply rotating) $\kappa$-negative hyperbolic relative equilibrium, with the body of mass $m_i$ moving on some (not necessarily geodesic) hyperbola $\mathcal H_i$ of a 2-dimensional hyperboloid in ${\mathbb H}_\kappa^3,\ i=1,2,\dots,n$; in  the hyperplanes $wyz$ and $xyz$, the planes of the hyperbolas ${\mathcal C}^i$ are parallel with the plane $yz$;

(iii) if $\bf q$ is given by \eqref{elliptic-hyperbolic-summarized}, the orbit is a (doubly rotating) $\kappa$-negative elliptic-hyperbolic relative equilibrium, with the body of mass $m_i$ moving on the  hyperbolic cylinder ${\bf C}^2_{r_i\rho_i},\ i=1,2,\dots,n$.
\label{theorem3}
\end{theorem}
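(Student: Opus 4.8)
The plan is to treat the three cases in turn, following closely the template of the proof of Theorem \ref{circle-Clifford} for ${\mathbb S}_\kappa^3$; the genuinely new feature is the presence of hyperbolic rotations, which forces a different handling of the conserved quantities. For part (i) I would first read the orbit directly off the normal form \eqref{elliptic2-summarized}: since $w_i^2+x_i^2=r_i^2$ is constant and $y_i,z_i$ are constant with $r_i^2+y_i^2-z_i^2=\kappa^{-1}$, the body of mass $m_i$ traces the circle $\{(w,x,y_i,z_i)\mid w^2+x^2=r_i^2\}$, which lies on the $2$-dimensional hyperboloid ${\bf H}_{\kappa_0,y_i}^2$ cut from ${\mathbb H}_\kappa^3$ by $y=y_i$ (consistent with Remark \ref{remark-ellip0}). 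To secure the parallelism claim I would evaluate the six angular-momentum integrals \eqref{angularmomentum} along the solution, exactly as in Theorem \ref{circle-Clifford}: one finds $c_{yz}=0$ and $c_{wx}=\alpha\sum_i m_ir_i^2$ (nonzero in general), while $c_{wy},c_{wz}$ are proportional to $\sin(\alpha t+a_i)$ and $c_{xy},c_{xz}$ to $\cos(\alpha t+a_i)$; since these are conserved, comparing their values at $t=0$ and $t=\pi/\alpha$ forces $c_{wy}=c_{wz}=c_{xy}=c_{xz}=0$. A projection argument then finishes part (i): were some circle $\mathcal C^i$ not parallel to the $wx$ plane inside $wxy$ (resp.\ $wxz$), its projection onto $xy$ (resp.\ $xz$) would be a genuine ellipse rather than a segment, and because all bodies circulate with the common frequency $\alpha$ these contributions share a sign and cannot cancel, contradicting $c_{xy}=c_{xz}=0$.

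Part (ii) proceeds in parallel, with the circular rotation replaced by a hyperbolic one. From \eqref{hyperbolic-summarized} each body moves, at fixed $(w_i,x_i)$, along the branch $z^2-y^2=\eta_i^2$, $z>0$, lying on the hyperboloid ${\bf H}_{\kappa_0,w_i}^2$ cut by $w=w_i$ (consistent with Remark \ref{remark-hyp0}). The essential difference is that the conserved integrals now involve $\sinh(\beta t+b_i)$ and $\cosh(\beta t+b_i)$ instead of trigonometric functions: I would compute $c_{wx}=0$ and $c_{yz}=-\beta\sum_i m_i\eta_i^2\ne 0$, and write each of $c_{wy},c_{wz},c_{xy},c_{xz}$ in the form $A\cosh\beta t+B\sinh\beta t$. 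Here the periodicity trick of part (i) is unavailable; instead I would invoke the linear independence of $\cosh\beta t$ and $\sinh\beta t$, so that constancy of these integrals forces $A=B=0$ and hence all four mixed momenta vanish. The projection-and-sign step then yields parallelism to the $yz$ plane inside $wyz$ and $xyz$. I expect this replacement of the antiperiodicity argument by a linear-independence argument, together with the verification that the projected hyperbolic motions contribute to the mixed momenta with a common sign, to be the main obstacle of the whole proof, since everything else reduces to reading off the already-derived normal forms.

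Finally, part (iii) is immediate. Comparing the parametrization \eqref{elliptic-hyperbolic-summarized}, namely $w_i=r_i\cos(\alpha t+a_i)$, $x_i=r_i\sin(\alpha t+a_i)$, $y_i=\eta_i\sinh(\beta t+b_i)$, $z_i=\eta_i\cosh(\beta t+b_i)$ with $r_i^2-\eta_i^2=\kappa^{-1}$, directly with the representation \eqref{hyp-cylinder} of the hyperbolic cylinder shows at once that the body of mass $m_i$ remains on ${\bf C}^2_{r_i\eta_i}$, which completes the argument. Thus the only substantive difficulties lie in the parallelism statements of (i) and (ii); the three motion-type assertions and all of case (iii) follow by inspection of the normal forms established in the definitions and criteria, and the absence of fixed points and of complementary circles in ${\mathbb H}_\kappa^3$ is what makes the list of behaviours shorter than in Theorem \ref{circle-Clifford}.
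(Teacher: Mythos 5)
Your proposal is correct and follows essentially the same route as the paper: read the motion type off the normal forms, show the mixed angular-momentum constants $c_{wy},c_{wz},c_{xy},c_{xz}$ vanish via the first integrals, transfer the projection argument of Theorem \ref{circle-Clifford} for the parallelism claims, and settle part (iii) by the same one-line comparison with the parametrization \eqref{hyp-cylinder}. The only deviation is in part (ii), where the paper forces the mixed constants to vanish through an even/odd-in-$t$ decomposition of $A(t)+B(t)$, while you invoke linear independence of $1$, $\cosh\beta t$, $\sinh\beta t$ --- an equivalent (and arguably cleaner) way to reach the same conclusion.
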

\begin{proof}
(i) The bodies move on circles, $\mathcal C_i,\ i=1,2,\dots, n,$ because, by \eqref{elliptic2-summarized}, the analytic expression of the orbit is given by
$$
{\bf q}=({\bf q}_1,{\bf q}_2,\dots, {\bf q}_n), {\bf q}_i=(w_i,x_i,y_i,z_i), i=1,2,\dots,n,
$$
$$
w_i(t)=r_i\cos(\alpha t+a_i),\ x_i(t)=r_i\sin(\alpha t+a_i),\ y_i(t)=y_i,\
z_i(t)=z_i,
$$
with $w_i^2+x_i^2=r_i^2,\ r_i^2+y_i^2-z_i^2=\kappa^{-1}$, and $y_i,z_i$ constant, $i=1,2,\dots,n$. The parallelism of the planes of the circles in the hyperplanes $wxy$ and $wxz$ follows exactly as in the proof of part (i) of Theorem \ref{circle-Clifford}, using the integrals of the total angular momenta.

(ii) The bodies move on hyperbolas, ${\mathcal H}_i,\ i=1,2,\dots,n$, because, by \eqref{hyperbolic-summarized}, the analytic expression of the orbit is given by
$$
{\bf q}=({\bf q}_1,{\bf q}_2,\dots,{\bf q}_n), {\bf q}_i=(w_i,x_i,y_i,z_i),\ i=1,2,\dots,n,
$$
\begin{align*}
w_i(t)&=w_i\ {\rm(constant)}, &  x_i(t)&=x_i\ {\rm(constant)},\\
y_i(t)&=\eta_i\sinh(\beta t+b_i), &
z_i(t)&=\eta_i\cosh(\beta t+b_i),
\end{align*}
with $y_i^2-z_i^2=-\eta_i^2,\ w_i^2+x_i^2-\eta_i^2=\kappa^{-1},
\ i=1,2,\dots,n$.

Let us now prove the parallelism statement for the planes containing the hyperbolas $\mathcal H_i$. For this purpose, notice that 
$$
c_{wx}=\sum_{i=1}^n m_i(w_i\dot{x}_i-\dot{w}_ix_i)=0
$$
and
$$
c_{yz}=\sum_{i=1}^n m_i(y_i\dot{z}_i-\dot{y}_iz_i)=-\beta\sum_{i=1}^nm_i\eta_i^2.
$$
These constants are independent of the bodies' position, a fact that confirms that they result from first integrals. To determine the values of the constants $c_{wy}, c_{wz}, c_{xy},$ and $c_{xz}$, we first compute that
$$
c_{wy}=\sum_{i=1}^n m_i(w_i\dot{y}_i-\dot{w}_iy_i)=\beta\sum_{i=1}^n m_iw_i\eta_i\cosh(\beta t+b_i),
$$
$$
c_{wz}=\sum_{i=1}^n m_i(w_i\dot{z}_i-\dot{w}_iz_i)=\beta\sum_{i=1}^n m_iw_i\eta_i\sinh(\beta t+b_i),
$$
$$
c_{xy}=\sum_{i=1}^n m_i(x_i\dot{y}_i-\dot{x}_iy_i)=\beta\sum_{i=1}^n m_ix_i\eta_i\cosh(\beta t+b_i),
$$
$$
c_{xz}=\sum_{i=1}^n m_i(x_i\dot{z}_i-\dot{x}_iz_i)=\beta\sum_{i=1}^n m_ix_i\eta_i\sinh(\beta t+b_i).
$$

We next show that $c_{wy}=0$. For this, notice first that,
using the formula $\cosh(\beta t+b_i)=\cosh b_i\cosh\beta t+\sinh b_i\sinh\beta t$, we can write 
\begin{equation}
c_{wy}=\beta[A(t)+B(t)],
\label{t}
\end{equation}
where
$$
A(t)=\sum_{i=1}^nm_iw_i\eta_i\cosh b_i\cosh\beta t, 
$$
and
$$
B(t)=\sum_{i=1}^nm_iw_i\eta_i\sinh b_i\sinh\beta t.
$$
But the function $\cosh$ is even, whereas $\sinh$ is odd. Therefore $A$ is even and $B$ is odd. Since $c_{wy}$ is constant, we also have
\begin{equation}
c_{wy}=\beta[A(-t)+B(-t)]=\beta[A(t)-B(t)].
\label{-t}
\end{equation}
From \eqref{t} and \eqref{-t} and the fact that $\beta\ne 0$, we can conclude that 
$$
c_{wy}=\beta A(t)\ \ {\rm and}\ \ B(t)=0,
$$
so $\frac{d}{dt}B(t)=0$. However, $\frac{d}{dt}B(t)=\beta A(t)$, which proves that $c_{wy}=0$.

The fact that $c_{xy}=0$ can be proved exactly the same way. The only difference when proving that $c_{wz}=0$ and $c_{xz}=0$ is the use of the corresponding hyperbolic formula, $\sinh(\beta t+b_i)=\sinh\beta t\cosh b_i+\cosh\beta t\sinh b_i$. In conclusion, 
$$
c_{wx}=c_{wy}=c_{wz}=c_{xy}=c_{xz}=0 \ \ {\rm and}\ \ c_{yz}\ne 0,
$$ 
which means that the hyperbolic rotation takes place relative to the origin of the coordinate system solely with respect to the plane $yz$.

Using a similar reasoning as in the proof of (i) for Theorem \ref{circle-Clifford}, it can be shown that the above conclusion proves the parallelism of the planes that contain the hyperbolas $\mathcal H_i$ in the 3-dimensional hyperplanes $wyz$ and $xyz$.

(iii) To prove that (doubly rotating) $\kappa$-negative elliptic-hyperbolic solutions move on hyperbolic cylinders, it is enough to compare the form of the orbit given in \eqref{hyperbolic-summarized} with the characterization \eqref{hyp-cylinder} of a hyperbolic cylinder.
\end{proof}

\newpage

\part{\rm EXAMPLES}

Since Theorems \ref{circle-Clifford}, \ref{theorem-fixed}, and \ref{theorem3} provide us with the qualitative behaviour of all the five classes of the relative equilibria that we expect to find in ${\mathbb S}_\kappa^3$ and ${\mathbb H}_\kappa^3$, we know what kind of rigid-body-type orbits to look for in the curved $n$-body problem for various values of $n\ge 3$. Ideal, of course, would be to find them all, but this problem appears to be very difficult, and it might never be completely solved. As a first step towards this (perhaps unreachable) goal, we will show that each type of orbit described in the above criteria and theorems exists for some values of $n$ and $m_1,m_2,\dots,m_n>0$.

To appreciate the difficulty of the above mentioned problem, we remark that its Euclidean analogue is that of finding all central configurations for the Newtonian potential. The notoreity of this problem has been recognized for at least seven decades, \cite{Win}. In fact, we don't even know whether, for some given masses $m_1, m_2,\dots, m_n>0$, with $n\ge 5$, the number of classes of central configurations (after we factorize the central configurations by size and rotation) is finite or not\footnote{Alain Albouy and Vadim Kaloshin have recently proved the finiteness of central configurations in the planar 5-body problem, except for a negligible set of masses, \cite{Albouy}.} and, if it is infinite, whether the set of classes of central configurations is discrete or contains a continuum. The finiteness of the number of classes of central configurations is Problem 6 on Steven Smale's list of mathematics problems for the 21st century, \cite{smale}. Its analogue in our case would be that of deciding whether, for given masses, $m_1,m_2,\dots, m_n>0$, the number of classes of relative equilibria of the 3-dimensional curved $n$-body problem is finite or not.

\section{Examples of $\kappa$-positive elliptic relative equilibria}

In this section, we will provide specific examples of $\kappa$-positive elliptic relative equilibria, i.e.\ orbits on the sphere ${\mathbb S}_\kappa^3$ that have a single rotation. The first example is that of a 3-body problem in which the 3 equal masses are at the vertices of an equilateral triangle that rotates along a not necessarily great circle of a great or non-great sphere. The second example is that of a 3-body problem in which the 3 non-equal masses move at the vertices of an acute scalene triangle along a great circle of a great sphere. The third example is that of a relative equilibrium generated from a fixed-point configuration in a 6-body problem of equal masses for which 3 equal masses move along a great circle of a great sphere at the vertices of an equilateral triangle, while the other 3 masses, which are equal to the others, are fixed on a complementary great circle of another great sphere at the vertices of an equilateral triangle. The fourth, and last example of the section, generalizes the third example to the case of acute scalene, not necessarily congruent, triangles and non-equal masses.

\subsection{A  class of $\kappa$-positive elliptic relative equilibrium of equal masses moving on a 2-dimensional sphere}\label{simply-Lag}

In the light of Remark \ref{z=0-remark}, we expect to find solutions
in ${\mathbb S}_\kappa^3$ that move on 2-dimensional spheres. A simple example is that of the Lagrangian solutions (i.e.\ equilateral triangles) of equal masses in the curved 3-body problem. Their existence, and the fact that they occur only when the masses are equal, was first proved in \cite{Diacu1} and \cite{Diacu001}. So, in this case, we have $n=3$ and $m_1=m_2=m_3=:m$. The solution of the corresponding system \eqref{second} we are seeking is of the form
\begin{equation}
{\bf q}=({\bf q}_1,{\bf q}_2,{\bf q}_3), \ {\bf q}_i=(w_i,x_i,y_i,z_i),\ i=1,2,3,
\label{0elliptic1}
\end{equation}
\begin{align*}
w_1(t)&=r\cos\omega t,& x_1(t)&=r\sin\omega t,\\
 y_1(t)&=y\ ({\rm constant}),& z_1(t)&=z\ ({\rm constant}),\\
w_2(t)&=r\cos(\omega t+2\pi/3),& x_2(t)&=r\sin(\omega t+2\pi/3),\\
y_2(t)&=y\ ({\rm constant}),& z_2(t)&=z\ ({\rm constant}),\\
w_3(t)&=r\cos(\omega t+4\pi/3),& x_3(t)&=r\sin(\omega t+4\pi/3),\\
y_3(t)&=y\ ({\rm constant}),& z_3(t)&=z\ ({\rm constant}),
\end{align*}
with $r^2+y^2+z^2=\kappa^{-1}$. Consequently, for the equations occurring in Criterion \ref{cri-elliptic1}, we have
$$
r_1=r_2=r_3=:r, \ \ a_1=0, a_2=2\pi/3, a_3=4\pi/3,
$$
$$
y_1=y_2=y_3=:y\ ({\rm constant}),\ \ z_1=z_2=z_3=:z\ ({\rm constant}).
$$
Substituting these values into the equations \eqref{w0}, \eqref{x0}, \eqref{y0}, \eqref{z0}, we obtain either identities or the equation
$$
\alpha^2=\frac{8m}{\sqrt{3}r^3(4-3\kappa r^2)^{3/2}}.
$$
Consequently, given $m>0,\ r\in(0,\kappa^{-1/2})$, and  $y, z$ with $r^2+y^2+z^2=\kappa^{-1}$, we can always find two frequencies, 
$$
\alpha_1=\frac{2}{r}\sqrt{\frac{2m}{\sqrt{3}r(4-3\kappa r^2)^{3/2}}}\ \ {\rm and}\ \  \alpha_2=-\frac{2}{r}\sqrt{\frac{2m}{\sqrt{3}r(4-3\kappa r^2)^{3/2}}},
$$
such that system \eqref{second} has a solution of the form \eqref{0elliptic1}. The positive frequency corresponds to one sense of rotation, whereas the negative frequency corresponds to the opposite sense.

Notice that if $r=\kappa^{-1/2}$, i.e.\ when the bodies move along a great circle of a great sphere, equations  \eqref{w0}, \eqref{x0}, \eqref{y0}, \eqref{z0} are identities for any $\alpha\in\mathbb R$, so any frequency leads to a solution. This phenomenon happens because, under those circumstances, the motion is generated from a fixed-point configuration, a case in which we can apply Criterion \ref{fixed-simply}, whose statement is independent of the frequency.

The bodies move on the great circle ${\bf S}_{\kappa,yz}^1$ of a great sphere only if $y=z=0$. Otherwise they move on non-great circles of great or non-great spheres. So we can also interpret this example as existing in the light of Remark \ref{z=z_0-remark}, which says that there are $\kappa$-positive elliptic rotations that leave non-great spheres invariant.

The constants of the angular momentum are 
$$
c_{wx}=3m\kappa^{-1}\omega\ne 0 \ \ {\rm and}\ \ c_{wy}=c_{wz}=c_{xy}=c_{xz}=c_{yz}=0,
$$
which means that the rotation takes place around the origin of the coordinate system only relative to the plane $wx$.

\subsection{Classes of $\kappa$-positive elliptic relative equilibria of non-equal masses moving on a 2-dimensional sphere}\label{nonequalmasses}

It is natural to ask whether solutions such as the one in the previous example also exist for non-equal masses. The answer is positive, and it was first answered in \cite{DiacuPol}, where we proved that, given a 2-dimensional sphere and any triangle inscribed in a great circle of it (for instance inside the equator $z=0$), there are masses, $m_1,m_2,m_3>0$, such that the bodies form a relative equilibrium that rotates around the $z$-axis.

We will consider a similar solution here in ${\mathbb S}_\kappa^3$, which moves on the great circle ${\bf S}_{\kappa, yz}^1$ of the great sphere ${\bf S}_{\kappa, y}^2$ or ${\bf S}_{\kappa,z}^2$. The expected analytic expression of the solution depends on the shape of the triangle, i.e.\ it has the form
\begin{equation}
{\bf q}=({\bf q}_1,{\bf q}_2,{\bf q}_3), \ {\bf q}_i=(w_i,x_i,y_i,z_i),\ i=1,2,3,
\label{equiattriangle}
\end{equation}
\begin{align*}
w_1(t)&=\kappa^{-1/2}\cos(\omega t+a_1),& x_1(t)&=\kappa^{-1/2}\sin(\omega t+a_1),\\
 y_1(t)&=0,& z_1(t)&=0,\\
w_2(t)&=\kappa^{-1/2}\cos(\omega t+a_2),& x_2(t)&=\kappa^{-1/2}\sin(\omega t+a_2),\\
y_2(t)&=0,& z_2(t)&=0,\\
w_3(t)&=\kappa^{-1/2}\cos(\omega t+a_3),& x_3(t)&=\kappa^{-1/2}\sin(\omega t+a_3),\\
y_3(t)&=0,& z_3(t)&=0,
\end{align*}
where the constants $a_1,a_2,a_3$, with $0\le a_1<a_2<a_3<2\pi$, determine the triangle's shape. The other constants involved in the description of this orbit are 
\begin{equation}
r_1=r_2=r_3=\kappa^{-1/2}.
\label{suppl-cond}
\end{equation}
We can use now Criterion \ref{fixed-simply} to prove that \eqref{equiattriangle} is a $\kappa$-positive elliptic relative equilibrium for any frequency $\omega\ne 0$. Indeed, we know from \cite{DiacuPol} that, for any shape of the triangle, there exist masses that yield a fixed point on the great circle ${\bf S}_{\kappa, yz}^1$, so the corresponding equations \eqref{w00}, \eqref{x00}, \eqref{y00}, \eqref{z00} are satisfied. Since conditions \eqref{suppl-cond} are also satisfied, the proof that \eqref{equiattriangle} is a solution of
\eqref{second} is complete.

The constants of the angular momentum integrals are
$$
c_{wx}=(m_1+m_2+m_3)\kappa^{-1}\omega\ne 0 \ \ {\rm and}\ \ c_{wy}=c_{wz}=c_{xy}=c_{xz}=c_{yz}=0,
$$
which means that the bodies rotate in $\mathbb R^4$ around the origin of the coordinate system only relative to the plane $xy$. 

\subsection{A class of $\kappa$-positive elliptic relative equilibria not contained on any 2-dimensional sphere}\label{non-2D}

The following example of a (simply rotating) $\kappa$-positive elliptic relative equilibrium in the curved 6-body problem corresponds to the second type of orbit described in part (i) of Theorem \ref{circle-Clifford}, and it is interesting from two points of view. First, it is an orbit that exists only in ${\mathbb S}_\kappa^3$, but cannot exist on any 2-dimensional sphere. Second, 3  bodies of equal masses move on a great circle of a great sphere at the vertices of an equilateral triangle, while the other 3 bodies of masses equal to the first stay fixed on a complementary great circle of another great sphere at the vertices of an equilateral triangle.

So consider the equal masses 
$$m_1=m_2=m_3=m_4=m_5=m_6=:m>0.$$ 
Then a solution as described above has the form
$$
{\bf q}=({\bf q}_1,{\bf q}_2, {\bf q}_3, {\bf q}_4, {\bf q}_5, {\bf q}_6),\ {\bf q}_i=(w_i,x_i,y_i,z_i),\ i=1,2,\dots,6,
$$
\begin{align*}
w_1&=\kappa^{-\frac{1}{2}}\cos\alpha t, & x_1&=\kappa^{-\frac{1}{2}}\sin\alpha t,\displaybreak[0]\\ 
y_1&=0, & z_1&=0,\displaybreak[0]\\
w_2&=\kappa^{-\frac{1}{2}}\cos(\alpha t+2\pi/3), & x_2&=\kappa^{-\frac{1}{2}}\sin(\alpha t +2\pi/3),\\ y_2&=0, & z_2&=0,\displaybreak[0]\\
w_3&=\kappa^{-\frac{1}{2}}\cos(\alpha t+4\pi/3), & x_3&=\kappa^{-\frac{1}{2}}\sin(\alpha t +4\pi/3),\\ y_3&=0, & z_3&=0,\displaybreak[0]\\
w_4&=0, & x_4&=0,\\ y_4&=\kappa^{-\frac{1}{2}}, & z_4&=0,\displaybreak[0]\\
w_5&=0, & x_5&=0,\\ y_5&=-\frac{\kappa^{-\frac{1}{2}}}{2}, & z_5&=\frac{\sqrt{3}\kappa^{-\frac{1}{2}}}{2},\displaybreak[0]\\
w_6&=0, & x_6&=0,\\ y_6&=-\frac{\kappa^{-\frac{1}{2}}}{2}, & z_6&=-\frac{\sqrt{3}\kappa^{-\frac{1}{2}}}{2}.
\end{align*}
A straightforward computation shows that this attempted orbit, which is generated from a fixed-point configuration, satisfies Criterion \ref{fixed-simply}, therefore it is indeed a solution of system \eqref{second} for $n=6$ and for any frequency $\alpha\ne 0$.

The constants of the angular momentum are 
$$
c_{wx}=3m\kappa^{-1}\alpha\ne 0\ \ {\rm and}\ \ 
c_{wy}=c_{wz}=c_{xy}=c_{xz}=c_{yz}=0,
$$
which implies that the rotation takes place around the origin of the coordinate system only relative to the $wx$ plane.

\subsection{Classes of $\kappa$-positive elliptic relative equilibria with non-equal masses not contained on any 2-dimensional sphere}
\label{2-ell-non-equal}

To prove the existence of $\kappa$-positive elliptic relative equilibria with non-equal masses not contained in any 2-dimensional sphere, we have only to combine the ideas of Subsections \ref{nonequalmasses} and \ref{non-2D}. More precisely, we consider a 6-body problem in which 3 bodies of non-equal masses, $m_1,m_2,m_3>0$, rotate on a great circle (lying, say, in the plane $wx$) of a great sphere at the vertices of an acute scalene triangle, while the other 3 bodies of non-equal masses, $m_4,m_5,m_6>0$, are fixed on a complementary great circle of another great sphere (lying, as a consequence of our choice of the previous circle, in the plane $yz$) at the vertices of another acute scalene triangle, which is not necessarily congruent with the first. 

Notice that, as shown in \cite{DiacuPol}, we must first choose the shapes of the triangles and then determine the masses that correspond to them, not the other way around. The reason for proceeding in this order is that not any 3 positive masses can rotate along a great circle of a great sphere. Like the solution offered in Subsection \ref{non-2D}, this orbit exists only in ${\mathbb S}_\kappa^3$, but cannot exist on any 2-dimensional sphere. Its analytic expression depends on the shapes of the two triangles, i.e.\
\begin{equation}
{\bf q}=({\bf q}_1,{\bf q}_2, {\bf q}_3, {\bf q}_4, {\bf q}_5, {\bf q}_6),\ {\bf q}_i=(w_i,x_i,y_i,z_i),\ i=1,2,\dots,6,
\label{rot-fixed}
\end{equation}
\begin{align*}
w_1&=\kappa^{-\frac{1}{2}}\cos(\alpha t+a_1), & x_1&=\kappa^{-\frac{1}{2}}\sin(\alpha t+a_1),\displaybreak[0]\\ 
y_1&=0, & z_1&=0,\displaybreak[0]\\
w_2&=\kappa^{-\frac{1}{2}}\cos(\alpha t+a_2), & x_2&=\kappa^{-\frac{1}{2}}\sin(\alpha t +a_2),\\ y_2&=0, & z_2&=0,\displaybreak[0]\\
w_3&=\kappa^{-\frac{1}{2}}\cos(\alpha t+a_3), & x_3&=\kappa^{-\frac{1}{2}}\sin(\alpha t +a_3),\\ y_3&=0, & z_3&=0,\displaybreak[0]\\
w_4&=0, & x_4&=0,\\ y_4&=\kappa^{-\frac{1}{2}}\cos b_4, & z_4&=\kappa^{-\frac{1}{2}}\sin b_4,\displaybreak[0]\\
w_5&=0, & x_5&=0,\\ y_5&=\kappa^{-\frac{1}{2}}\cos b_5, & z_5&=\kappa^{-\frac{1}{2}}\sin b_5,\displaybreak[0]\\
w_6&=0, & x_6&=0,\displaybreak[0]\\ 
y_6&=\kappa^{-\frac{1}{2}}\cos b_6, & z_6&=\kappa^{-\frac{1}{2}}\sin b_6 .
\end{align*}
where the constants $a_1,a_2$, and $a_3$, with $0\le a_1<a_2<a_3<2\pi$, and $b_4,b_5$, and $b_6$, with $0\le b_4<b_5<b_6<2\pi$, determine the shape of the first and second triangle, respectively.

For $t=0$, we obtain the configuration given by the coordinates
\begin{align*}
w_1&=\kappa^{-\frac{1}{2}}\cos a_1,& x_1&=\kappa^{-\frac{1}{2}}\sin a_1,& y_1&=0,& z_1&=0,\displaybreak[0]\\
w_2&=\kappa^{-\frac{1}{2}}\cos a_2,& x_2&=\kappa^{-\frac{1}{2}}\sin a_2,& y_2&=0,& z_2&=0,\displaybreak[0]\\
w_3&=\kappa^{-\frac{1}{2}}\cos a_3,& x_3&=\kappa^{-\frac{1}{2}}\sin a_3,& y_3&=0,& z_3&=0,\displaybreak[0]\\
w_4&=0,& x_4&=0,& y_4&=\kappa^{-\frac{1}{2}}\cos b_4,& z_4&=\kappa^{-\frac{1}{2}}\sin b_4,\displaybreak[0]\\
w_5&=0,& x_5&=0,& y_5&=\kappa^{-\frac{1}{2}}\cos b_5,& z_5&=\kappa^{-\frac{1}{2}}\sin b_5,\displaybreak[0]\\
w_6&=0,& x_6&=0,& y_6&=\kappa^{-\frac{1}{2}}\cos b_6,& z_6&=\kappa^{-\frac{1}{2}}\sin b_6.\\
\end{align*}
We prove next that this is a fixed-point configuration. For this purpose, let us first compute that
$$
\nu_{12}=\nu_{21}=\cos(a_1-a_2), 
$$
$$
\nu_{13}=\nu_{31}=\cos(a_1-a_3),
$$
$$
\nu_{23}=\nu_{32}=\cos(a_2-a_3),
$$
$$
\nu_{14}=\nu_{41}=\nu_{15}=\nu_{51}=\nu_{16}=\nu_{61}=0,
$$
$$
\nu_{24}=\nu_{42}=\nu_{25}=\nu_{52}=\nu_{26}=\nu_{62}=0,
$$
$$
\nu_{34}=\nu_{43}=\nu_{35}=\nu_{53}=\nu_{36}=\nu_{63}=0,
$$
$$
\nu_{45}=\nu_{54}=\cos(b_4-a_5), 
$$
$$
\nu_{46}=\nu_{64}=\cos(b_4-b_6),
$$
$$
\nu_{56}=\nu_{65}=\cos(b_5-b_6).
$$
Since $y_1=y_2=y_3=z_1=z_2=z_3=0$, it follows that, at $t=0$, the equations involving the force for the coordinates $w_1, x_1, w_2, x_2, w_3, x_3$ involve only the constants $m_1, m_2, m_3, a_1, a_2, a_3$. In other words, for these coordinates, the forces acting on the masses $m_1, m_2,$ and $m_3$ do not involve the masses $m_4, m_5,$ and $m_6$. But the bodies $m_1, m_2,$ and $m_3$ lie on the great circle ${\bf S}_{\kappa,yz}^1$, which can be seen as lying on the great sphere ${\bf S}_{\kappa,z}^2$. This means that, by applying the result of \cite{DiacuPol}, the bodies $m_1,m_2,$ and $m_3$ form an independent fixed-point configuration. Similarly, we can show that the masses $m_4, m_5,$ and $m_6$ form an independent fixed-point configuration. Therefore all 6 bodies form a fixed-point configuration.

Consequently, we can use Criterion \ref{fixed-simply} to check whether $\bf q$ given by \eqref{rot-fixed} is a $\kappa$-positive elliptic relative equilibrium generated from a fixed-point configuration. We can approach this problem in two ways. One is computational, and it consists of using the fact that the positions at $t=0$ form a fixed-point configuration to determine the relationships between the constants $m_1, m_2, m_3, a_1, a_2,$ and $a_3$, on one hand, and the constants $m_4,m_5,m_6, b_4, b_5,$ and $b_6$, on the other hand. It turns out that they reduce to conditions \eqref{w00}, \eqref{x00}, \eqref{y00}, and \eqref{z00}. Then we only need to remark that 
$$
r_1=r_2=r_3=\kappa^{-1/2}\ \ {\rm and}\ \ r_4=r_5=r_6=0,
$$
which means that condition (ii) of Criterion \ref{fixed-simply} is satisfied.
The other approach is to invoke again the result of \cite{DiacuPol} and a reasoning similar to the one we used to show that the position at $t=0$ is a fixed-point configuration. Both help us conclude that $\bf q$ given by \eqref{rot-fixed} is a solution of system \eqref{second} for any $\alpha\ne 0$.

The constants of the angular momentum are
$$
c_{wx}=(m_1+m_2+m_3)\kappa^{-1}\alpha\ne 0 \ \ {\rm and}\ \ c_{wy}=c_{wz}=c_{xy}=c_{xz}=c_{yz}=0,
$$
which means, as expected, that the bodies rotate in $\mathbb R^4$ around the origin of the coordinate system only relative to the plane $wx$.

\section{Examples of $\kappa$-positive elliptic-elliptic relative equilibria}

In this section we construct examples of $\kappa$-positive elliptic-elliptic relative equilibria, i.e.\ orbits with two elliptic rotations on the sphere ${\mathbb S}_\kappa^3$.
The first example is that of a 3-body problem in which 3 bodies of equal masses are at the vertices of an equilateral triangle, which has two rotations of the same frequency. The second example is that of a 4-body problem in which 4 equal masses are at the vertices of a regular tetrahedron, which has two rotations of the same frequency. The third example is that of 5 equal masses lying at the vertices of a pentatope with double rotation. This is the only case of the a regular polytope that allows relative equilibria, because the 5 other existing regular polytopes of $\mathbb R^4$ have antipodal vertices, so they introduce singularities. As in the previous example, this motion cannot take place on any 2-dimensional sphere. The fourth example is that of a 6-body problem, with 3 bodies of equal masses rotating at the vertices of an equilateral triangle along a great circle of a great sphere, while the other 3 bodies, of the same masses as the others, rotate at the vertices of an equilateral triangle along a complementary great circle of another great sphere. The frequencies of the two rotations are distinct, in general. The fifth example generalizes the fourth example in the sense that the triangles are scalene, acute, not necessarily congruent, and the masses as well as the frequencies of the rotations are distinct, in general.

\subsection{Equilateral triangle with equal masses moving with equal frequencies} 

The example we will now construct is that of a (doubly rotating) $\kappa$-positive elliptic-elliptic equilateral triangle of equal masses in the curved 3-body problem in ${\mathbb S}_\kappa^3$ for which the rotations have the same frequency. Such solutions cannot be found on 2-dimensional spheres. So we consider $\kappa>0$ and the masses $m_1=m_2=m_3=:m>0$. Then the solution we check for system \eqref{second} with $n=3$ has the form: 
\begin{equation}
{\bf q}=({\bf q}_1, {\bf q}_2, {\bf q}_3), \ {\bf q}_i=(w_i,x_i,y_i,z_i), \ i=1,2,3,
\label{ellip-3}
\end{equation}
\begin{align*}
w_1&=r\cos\alpha t, & x_1&=r\sin\alpha t,\\ 
y_1&=\rho\cos\alpha t, & z_1&=\rho\sin\alpha t,\displaybreak[0]\\
w_2&=r\cos\left(\alpha t+2\pi/3\right),& x_2&=r\sin\left(\alpha t+ 2\pi/3\right),\\ 
y_2&=\rho\cos(\alpha t+ 2\pi/3),& z_2&=\rho\sin(\alpha t + 2\pi/3),\displaybreak[0]\\
w_3&=r\cos\left(\alpha t+4\pi/3\right),& x_3&=r\sin\left(\alpha t+ 4\pi/3\right),\\
 y_3&=\rho\cos(\alpha t+4\pi/3),& z_3&=\rho\sin(\alpha t + 4\pi/3),
\end{align*}
with $r^2+\rho^2=\kappa^{-1}$. For $t=0$, the above attempted solution gives for the 3 bodies the coordinates
\begin{align*}
w_1&=r, & x_1&=0, & y_1&=\rho, & z_1&=0,\\
w_2&=-\frac{r}{2}, & x_2&=\frac{r\sqrt{3}}{2}, & y_2&=-\frac{\rho}{2}, & z_2&=\frac{\rho\sqrt{3}}{2},\\
w_3&=-\frac{r}{2}, & x_3&=-\frac{r\sqrt{3}}{2}, & y_3&=-\frac{\rho}{2}, & z_3&=-\frac{\rho\sqrt{3}}{2},
\end{align*}
which is a fixed-point configuration, since the bodies have equal masses and are at the vertices of an equilateral triangle inscribed in a great circle of a great sphere. Consequently, we can use Criterion \ref{fixed-doubly} to check whether a solution of the form \eqref{ellip-3} satisfies system \eqref{second} for any $\alpha\ne 0$. A straightforward computation shows that the first $4n$ conditions are satisfied. Moreover, since the two rotations have the same frequency, it follows that condition (ii) of Criterion \ref{fixed-doubly} is verified, therefore \eqref{ellip-3} is indeed a solution of system \eqref{second} for any $\alpha\ne 0$.

The angular momentum constants are 
$$
c_{wx}=3m\alpha r^2,\ \ c_{wy}=0,\ \ c_{wz}=3m\alpha r\rho,
$$
$$ 
c_{xy}=-3m\alpha r\rho,\ \ c_{xz}=0,\ \ c_{yz}=3m\alpha\rho^2,
$$
which show that rotations around the origin of the coordinate system take place relative to 4 planes: $wx, wz, xy$, and $yz$. Consequently  the bodies don't move on the same circle, but on the same Clifford torus, namely ${\bf T}_{r,\rho}^2$,
a case that agrees with the qualitative result described in part (ii) of Theorem
\ref{circle-Clifford}.

Notice that for $r=\kappa^{-1}$ and $\rho=0$, the orbit becomes a (simply rotating) $\kappa$-positive elliptic relative equilibrium that rotates along a great circle of a great sphere in ${\mathbb S}_\kappa^3$, i.e.\ an orbit such as the one we described in Subsection \ref{simply-Lag}.

\subsection{Regular tetrahedron with equal masses moving with e\-qual-size frequencies} 

We will further construct a (doubly rotating) $\kappa$-positive elliptic-elliptic solution of the 4-body problem in ${\mathbb S}_\kappa^3$, in which 4 equal masses are at the vertices of a regular tetrahedron that has rotations of equal frequencies. So let us fix $\kappa>0$ and $m_1=m_2=m_3=m_4=:m>0$ and consider the initial position of the 4 bodies to be given as in the first example of Subsection \ref{specific-fixed}, i.e.\ by the coordinates
\begin{align*}
w_1^0&=0, & x_1^0&=0, & y_1^0&=0, & z_1^0&=\kappa^{-1/2},\\
w_2^0&=0 ,& x_2^0&=0, & y_2^0&=\frac{2\sqrt{2}}{3}\kappa^{-1/2}, & z_2^0&=-\frac{1}{3}\kappa^{-1/2},\displaybreak[0]\\
w_3^0&=0, & x_3^0&=-\frac{\sqrt{6}}{3}\kappa^{-1/2},& y_3^0&=-\frac{\sqrt{2}}{3}\kappa^{-1/2},& z_3^0&=-\frac{1}{3}\kappa^{-1/2},\\
w_4^0&=0, & x_4^0&=\frac{\sqrt{6}}{3}\kappa^{-1/2},& y_4^0&=-\frac{\sqrt{2}}{3}\kappa^{-1/2},& z_4^0&=-\frac{1}{3}\kappa^{-1/2},
\end{align*}
which is a fixed-point configuration. Indeed, the masses are equal and the bodies are at the vertices of a regular tetrahedron inscribed in a great sphere
of ${\mathbb S}_\kappa^3$.

For this choice of initial positions, we can compute that
\begin{align*}
r_1&=r_2=0,&  \rho_1&=\rho_2=\kappa^{-1/2},\\
r_3&=r_4=\frac{\sqrt{6}}{3}\kappa^{-1/2}, &
\rho_3&=\rho_4=\frac{\sqrt{3}}{3}\kappa^{-1/2},
\end{align*}
which means that we expect that 2 bodies (corresponding to $m_1$ and $m_2$) move on the Clifford torus with $r=0$ and $\rho=\kappa^{-1/2}$ (i.e.\ the only Clifford torus, in the class of a given foliation of ${\mathbb S}_\kappa^3$, which is also a great circle of ${\mathbb S}_\kappa^3$, see Figure \ref{cliff}), while we expect the other 2 bodies to move on the Clifford torus with  $r=\frac{\sqrt{6}}{3}\kappa^{-1/2}$ and
$\rho=\frac{\sqrt{3}}{3}\kappa^{-1/2}$.

These considerations allow us to obtain the constants that determine the angles. Indeed, $a_1$ and $a_2$ can take any values, 
$$a_3=3\pi/2,\ a_4=\pi/2,\ b_1=\pi/2,$$
and $b_2, b_3, b_4$ are such that 
$$\sin b_2=-\frac{1}{3},\ \ \cos b_2=\frac{2\sqrt{2}}{3},$$
$$\cos b_3=-\frac{\sqrt{6}}{3},\ \  \sin b_3=-\frac{\sqrt{3}}{3}$$
$$\cos b_4=-\frac{\sqrt{6}}{3},\ \  \sin b_4=-\frac{\sqrt{3}}{3},$$
which means that $b_3=b_4$.

We can now compute the form of the candidate for a solution generated from the above fixed-point configuration. Using the above values of $r_i, \rho_i, a_i$ and $b_i$, $i=1,2,3,4$, we obtain from the equations
$$
w_i^0=r_i\cos a_i, \ x_i^0=r_i\sin a_i, \ y_i^0=\rho_i\cos b_i, \ z_i^0=\rho_i\sin b_i, \ i=1,2,3,4,
$$ 
that the candidate for a solution is given by
\begin{equation}
{\bf q}=({\bf q}_1, {\bf q}_2, {\bf q}_3, {\bf q}_4),\ {\bf q}_i=(w_i,x_i,y_i,z_i),\ i=1,2,3,4,
\label{ellip-ellip-tetra}
\end{equation}
\begin{align*}
w_1&=0,& x_1&=0,\\ 
y_1&=\kappa^{-1/2}\cos(\alpha t+\pi/2),& z_1&=\kappa^{-1/2}\sin(\alpha t +\pi/2),\displaybreak[0]\\
w_2&=0,& x_2&=0,\\ 
y_2&=\kappa^{-1/2}\cos(\alpha t+b_2),& z_2&=\kappa^{-1/2}\sin(\alpha t +b_2),\displaybreak[0]\\
w_3&=\frac{\sqrt{6}}{3}\kappa^{-1/2}\cos(\beta t+3\pi/2),& x_3&=\frac{\sqrt{6}}{3}\kappa^{-1/2}\sin(\beta t+3\pi/2),\displaybreak[0]\\
y_3&=\frac{\sqrt{3}}{3}\kappa^{-1/2}\cos(\beta t+b_3),& 
z_3&=\frac{\sqrt{3}}{3}\kappa^{-1/2}\sin(\beta t+b_3),\displaybreak[0]\\
w_4&=\frac{\sqrt{6}}{3}\kappa^{-1/2}\cos(\beta t+\pi/2),& x_4&=\frac{\sqrt{6}}{3}\kappa^{-1/2}\sin(\beta t+\pi/2),\displaybreak[0]\\
y_4&=\frac{\sqrt{3}}{3}\kappa^{-1/2}\cos(\beta t+b_4),& 
z_4&=\frac{\sqrt{3}}{3}\kappa^{-1/2}\sin(\beta t+b_4).
\end{align*}
If we invoke Criterion \ref{fixed-doubly}, do a straightforward computation, and use the fact that the frequencies of the two rotations have the same size, i.e.\ are equal in absolute value, we can conclude that $\bf q$, given by \eqref{ellip-ellip-tetra}, satisfies system \eqref{second}, so it is indeed a solution of the curved 4-body problem for $\kappa>0$. 

Straightforward computations lead us to the following values of the angular momentum constants:
$$
c_{wx}=\frac{4}{3}m\alpha\kappa^{-1},\ c_{wy}=c_{wz}=c_{xy}=c_{xz}=0,\ c_{yz}=\frac{8}{3}m\alpha\kappa^{-1},
$$
for $\beta=\alpha$ and 
$$
c_{wx}=\frac{4}{3}m\alpha\kappa^{-1},\ c_{wy}=c_{wz}=c_{xy}=c_{xz}=0,\ c_{yz}=-\frac{8}{3}m\alpha\kappa^{-1}
$$
for $\beta=-\alpha$, a fact which shows that rotations around the origin of the coordinate system takes place only relative to the planes $wx$ and $yz$.

\subsection{Regular pentatope with equal masses and equal-size frequencies}

We will next construct a (doubly rotating) $\kappa$-positive elliptic-elliptic solution of the 5-body problem in ${\mathbb S}_\kappa^3$, in which 5 equal masses are at the vertices of a regular pentatope that has two rotations of equal-size frequencies. So let us fix $\kappa>0$ and $m_1=m_2=m_3=m_4=m_5=:m>0$ and consider the initial position of the 5 bodies to be given as in the second example of Subsection \ref{specific-fixed}, i.e.\ by the coordinates
\begin{align*}
w_1^0&=\kappa^{-1/2},& x_1^0&=0,& y_1^0&=0,& z_1^0&=0,\displaybreak[0]\\
w_2^0&=-\frac{\kappa^{-1/2}}{4},& x_2^0&=\frac{\sqrt{15}\kappa^{-1/2}}{4},& y_2^0&=0,& z_2^0&=0,\displaybreak[0]\\
w_3^0&=-\frac{\kappa^{-1/2}}{4},& x_3^0&=-\frac{\sqrt{5}\kappa^{-1/2}}{4\sqrt{3}},& y_3^0&=\frac{\sqrt{5}\kappa^{-1/2}}{\sqrt{6}},& z_3^0&=0,\displaybreak[0]\\
w_4^0&=-\frac{\kappa^{-1/2}}{4},& x_4^0&=-\frac{\sqrt{5}\kappa^{-1/2}}{4\sqrt{3}},& y_4^0&=-\frac{\sqrt{5}\kappa^{-1/2}}{2\sqrt{6}},& z_4^0&=\frac{\sqrt{5}\kappa^{-1/2}}{2\sqrt{2}},\displaybreak[0]\\
w_5^0&=-\frac{\kappa^{-1/2}}{4},& x_5^0&=-\frac{\sqrt{5}\kappa^{-1/2}}{4\sqrt{3}},& y_5^0&=-\frac{\sqrt{5}\kappa^{-1/2}}{2\sqrt{6}},& z_5^0&=-\frac{\sqrt{5}\kappa^{-1/2}}{2\sqrt{2}},
\end{align*}
which is a fixed-point configuration because the masses are equal and the bodies are at the vertices of a regular pentatope inscribed in ${\mathbb S}_\kappa^3$.

For this choice of initial positions, we can compute that
\begin{align*}
r_1&=r_2=\kappa^{-1/2}, & \rho_1&=\rho_2=0,\\
r_3&=r_4=r_5=\frac{\kappa^{-1/2}}{\sqrt{6}}, & \rho_3&=\rho_4=\rho_5=\frac{\sqrt{5}\kappa^{-1/2}}{\sqrt{6}},
\end{align*}
which means that we expect that 2 bodies (corresponding to $m_1$ and $m_2$) move on the Clifford torus with $r=\kappa^{-1/2}$ and $\rho=0$ (i.e.\ the only Clifford torus, in a class of a given foliation of ${\mathbb S}_\kappa^3$, which is also a great circle of ${\mathbb S}_\kappa^3$), while we expect the other 3 bodies to move on the Clifford torus with $r=\frac{1}{\sqrt{6}}\kappa^{-1/2}$ and $\rho=\frac{\sqrt{5}}{\sqrt{6}}\kappa^{-1/2}$.

These considerations allow us to obtain the constants that determine the angles. We obtain that 
$$
a_1=0,
$$
$a_2$ is such that
$$
\cos a_2=-\frac{1}{4}, \ \ \sin a_2=-\frac{\sqrt{15}}{4}
$$
and $a_3, a_4, a_5$ are such that
$$
\cos a_3=-\frac{\sqrt{6}}{4}, \ \ \sin a_3=-\frac{\sqrt{10}}{4},
$$
$$
\cos a_4=-\frac{\sqrt{6}}{4}, \ \ \sin a_4=-\frac{\sqrt{10}}{4},
$$
$$
\cos a_5=-\frac{\sqrt{6}}{4}, \ \ \sin a_5=-\frac{\sqrt{10}}{4},
$$
which means that $a_3=a_4=a_5$. We further obtain that, since $\rho_1=\rho_2=0$,
the constants $b_1$ and $b_2$ can be anything, in particular 0. Further computations lead us to the conclusion that
$$
b_1=b_2=b_3=0,\ \ b_4=2\pi/3,\ \ b_5=4\pi/3.
$$

We can now compute the form of the candidate for a solution generated from the above fixed-point configuration. Using the above values of $r_i, \rho_i, a_i$ and $b_i$, $i=1,2,3,4,5$, we obtain from the equations
$$
w_i^0=r_i\cos a_i, \ x_i^0=r_i\sin a_i, \ y_i^0=\rho_i\cos b_i, \ z_i^0=\rho_i\sin b_i, \ i=1,2,3,4,5
$$ 
that the candidate for a solution is given by
\begin{equation}
{\bf q}=({\bf q}_1, {\bf q}_2, {\bf q}_3, {\bf q}_4),\ {\bf q}_i=(w_i,x_i,y_i,z_i),\ i=1,2,3,4,5
\label{ellip-ellip-penta}
\end{equation}
\begin{align*}
w_1&=\kappa^{-1/2}\cos\alpha t,& x_1&=\kappa^{-1/2}\sin\alpha t,\\ 
y_1&=0,& z_1&=0,\displaybreak[0]\\
w_2&=\kappa^{-1/2}\cos(\alpha t+a_2),& x_2&=\kappa^{-1/2}\sin(\alpha t+a_2),\\ 
y_2&=0,& z_2&=0,\displaybreak[0]\\
w_3&=\frac{1}{\sqrt{6}}\kappa^{-1/2}\cos(\alpha t+a_3),& x_3&=\frac{1}{\sqrt{6}}\kappa^{-1/2}\sin(\alpha t+a_3),\displaybreak[0]\\
y_3&=\frac{\sqrt{5}}{\sqrt{6}}\kappa^{-1/2}\cos\beta t,& 
z_3&=\frac{\sqrt{5}}{\sqrt{6}}\kappa^{-1/2}\sin\beta t,\displaybreak[0]\\
w_4&=\frac{1}{\sqrt{6}}\kappa^{-1/2}\cos(\alpha t+a_4),& x_4&=\frac{1}{\sqrt{6}}\kappa^{-1/2}\sin(\alpha t+a_4),\displaybreak[0]\\
y_4&=\frac{\sqrt{5}}{\sqrt{6}}\kappa^{-1/2}\cos(\beta t+2\pi/3),& 
z_4&=\frac{\sqrt{5}}{\sqrt{6}}\kappa^{-1/2}\sin(\beta t+2\pi/3), \displaybreak[0]\\
w_5&=\frac{1}{\sqrt{6}}\kappa^{-1/2}\cos(\alpha t+a_5),& x_5&=\frac{1}{\sqrt{6}}\kappa^{-1/2}\sin(\alpha t+a_5),\displaybreak[0]\\
y_5&=\frac{\sqrt{5}}{\sqrt{6}}\kappa^{-1/2}\cos(\beta t+4\pi/3),& 
z_5&=\frac{\sqrt{5}}{\sqrt{6}}\kappa^{-1/2}\sin(\beta t+4\pi/3),
\end{align*}
If we invoke Criterion \ref{fixed-doubly}, do a straightforward computation, and use the fact that the frequencies of the two rotations have the same size, i.e.\ $|\alpha|=|\beta|$, we can conclude that $\bf q$, given by \eqref{ellip-ellip-penta}, satisfies system \eqref{w10}, \eqref{x10}, \eqref{y10}, \eqref{z10} and condition (ii), so it is indeed a (doubly rotating) $\kappa$-positive elliptic-elliptic relative equilibrium generated from a fixed point, i.e.\ a solution of the curved 5-body problem for $\kappa>0$ and any value of $\alpha$ and $\beta$ with $ |\alpha|=|\beta|\ne 0$. 

A straightforward computation shows that the constants of the angular momentum are
$$
c_{wx}=\frac{5}{2}m\kappa^{-1}\alpha, \ c_{wy}=c_{wz}=c_{xy}=c_{xz}=0, \ c_{yz}=\frac{5}{2}m\kappa^{-1}\alpha
$$
for $\beta=\alpha$ and
$$
c_{wx}=\frac{5}{2}m\kappa^{-1}\alpha, \ c_{wy}=c_{wz}=c_{xy}=c_{xz}=0,\ c_{yz}=-\frac{5}{2}m\kappa^{-1}\alpha
$$
for $\beta=-\alpha$, which means that the bodies rotate around the origin of the coordinate system only relative to the planes $wx$ and $yz$.

\subsection{Pair of equilateral triangles with equal masses moving with distinct-size frequencies}\label{pair-equal-masses}

We now construct an example in the 6-body problem in which 3 bodies of equal masses move along a great circle at the vertices of an equilateral triangle, while the other 3 bodies of masses equal to those of the previous bodies move along a complementary circle of another great sphere, also at the vertices of an equilateral triangle. So consider $\kappa>0$, the equal masses 
$m_1=m_2=m_3=m_4=m_5=m_6=:m>0$, and the frequencies $\alpha, \beta$, which, in general, we can take as distinct, $\alpha\ne\beta$.
Then a candidate for a solution as described above has the form
\begin{equation}
{\bf q}=({\bf q}_1,{\bf q}_2, {\bf q}_3, {\bf q}_4, {\bf q}_5, {\bf q}_6),\ {\bf q}_i=(w_i,x_i,y_i,z_i),\ i=1,2,\dots,6,
\label{2-noneq-compl}
\end{equation}
\begin{align*}
w_1&=\kappa^{-\frac{1}{2}}\cos\alpha t, & x_1&=\kappa^{-\frac{1}{2}}\sin\alpha t,\\ 
y_1&=0, & z_1&=0,\displaybreak[0]\\\
w_2&=\kappa^{-\frac{1}{2}}\cos(\alpha t+2\pi/3), & x_2&=\kappa^{-\frac{1}{2}}\sin(\alpha t +2\pi/3),\\ y_2&=0, & z_2&=0,\displaybreak[0]\\\
w_3&=\kappa^{-\frac{1}{2}}\cos(\alpha t+4\pi/3), & x_3&=\kappa^{-\frac{1}{2}}\sin(\alpha t +4\pi/3),\\ y_3&=0, & z_3&=0,\displaybreak[0]\\
w_4&=0, & x_4&=0,\\ y_4&=\kappa^{-\frac{1}{2}}\cos\beta t, & z_4&=\kappa^{-\frac{1}{2}}\sin\beta t,\displaybreak[0]\\\
w_5&=0, & x_5&=0,\\ y_5&=\kappa^{-\frac{1}{2}}\cos(\beta t+2\pi/3), & z_5&=\kappa^{-\frac{1}{2}}\sin(\beta t+2\pi/3),\displaybreak[0]\\\
w_6&=0, & x_6&=0,\\ y_6&=\kappa^{-\frac{1}{2}}\cos(\beta t+4\pi/3), & z_6&=\kappa^{-\frac{1}{2}}\sin(\beta t+4\pi/3).
\end{align*}
For $t=0$, we obtain the fixed-point configuration specific to ${\mathbb S}_\kappa^3$, similar to the one constructed in Subsection \ref{specific-fixed}:
\begin{align*}
w_1&=0,& x_1&=\kappa^{-1/2},& y_1&=0,& z_1&=0,\displaybreak[0]\\\
w_2&=-\frac{\kappa^{-1/2}}{2},& x_2&=\frac{\sqrt{3}\kappa^{-1/2}}{2},& y_2&=0,& z_2&=0,\displaybreak[0]\\
w_3&=-\frac{\kappa^{-1/2}}{2},& x_3&=-\frac{\sqrt{3}\kappa^{-1/2}}{2},& y_3&=0,& z_3&=0,\displaybreak[0]\\
w_4&=0,& x_4&=0,& y_4&=\kappa^{-1/2},& z_4&=0,\displaybreak[0]\\
w_5&=0,& x_5&=0,& y_5&=-\frac{\kappa^{-1/2}}{2},& z_5&=\frac{\sqrt{3}\kappa^{-1/2}}{2},\displaybreak[0]\\
w_6&=0,& x_6&=0,& y_6&=-\frac{\kappa^{-1/2}}{2},& z_6&=-\frac{\sqrt{3}\kappa^{-1/2}}{2}.
\end{align*}
To prove that $\bf q$ given by \eqref{2-noneq-compl} is a solution of system \eqref{second}, we can therefore apply Criterion \ref{fixed-doubly}. A straightforward computation shows that the $4n$ conditions \eqref{w10}, \eqref{x10}, \eqref{y10}, \eqref{z10} are satisfied, and then we can observe that condition (i) is also verified because 
\begin{align*}
r_1&=r_2=r_3=\kappa^{-1/2}, & \rho_1&=\rho_2=\rho_3=0,\\
r_4&=r_5=r_6=0, & \rho_4&=\rho_5=\rho_6=\kappa^{-1/2}.
\end{align*}
Consequently $\bf q$ given by \eqref{2-noneq-compl} is a $\kappa$-positive elliptic-elliptic relative equilibrium of the 6-body problem given by system \eqref{second} with $n=6$ for any $\alpha,\beta\ne 0$. If $\alpha/\beta$ is rational, a case that corresponds to a set of frequency pairs that has measure zero in $\mathbb R^2$, the orbits are periodic. In general, however, $\alpha/\beta$ is irrational, so the orbits are quasiperiodic. Though quasiperiodic relative equilibria were already discovered
for the classical equations in $\mathbb R^4$, \cite{Al-Chen}, \cite{Chen-Perez},
this is the first example of such an orbit in a $3$-dimensional space.

A straightforward computation shows that the constants of the angular momentum integrals are 
$$
c_{wx}=3m\kappa^{-1}\alpha\ne 0,\ \ c_{yz}=3m\kappa^{-1}\beta\ne 0, \ \
c_{wy}=c_{wz}=c_{xy}=c_{xz}=0,
$$
which means that the rotation takes place around the origin of the coordinate system only relative to the planes $wx$ and $yz$.

Notice that, in the light of \cite{DiacuPol}, the kind of example constructed here in the 6-body problem can be easily generalized to any $(n+m)$-body problem, $n,m\ge 3$, of equal masses, in which $n$ bodies rotate along a great circle of a great sphere at the vertices of a regular $n$-gon, while the other $m$ bodies rotate along a complementary great circle of another great sphere at the vertices of a regular $m$-gon. The same as in the 6-body problem discussed here, the rotation takes place around the origin of the coordinate system only relative to 2 out of 6 reference planes.

\subsection{Pair of scalene triangles with non-equal masses moving with distinct frequencies}\label{alpha-beta}

We will now extend the example constructed in Subsection \ref{pair-equal-masses} to non-equal masses. The idea is the same as the one we used in Subsection \ref{2-ell-non-equal}, based on the results proved in \cite{DiacuPol}, according to which, given an acute scalene triangle inscribed in a great circle of a great sphere, we can find 3 masses such that this configuration forms a fixed point.
The difference is that we don't keep the configuration fixed here by assigning zero initial velocities, but make it rotate uniformly, thus leading to a relative equilibrium. In fact, since we are in a 6-body problem, 3 bodies of non-equal masses will rotate along a great circle of a great sphere at the vertices of an acute scalene triangle, while the other 3 bodies will rotate along a complementary great circle of another great sphere at the vertices of another acute scalene triangle, not necessarily congruent with the other one.

So consider the masses $m_1, m_2, m_3, m_4, m_5, m_6>0,$ which, in general, are not equal. Then a candidate for a solution as described above has the form
\begin{equation}
{\bf q}=({\bf q}_1,{\bf q}_2, {\bf q}_3, {\bf q}_4, {\bf q}_5, {\bf q}_6),\ {\bf q}_i=(w_i,x_i,y_i,z_i),\ i=1,2,\dots,6,
\label{2-triangles}
\end{equation}
\begin{align*}
w_1&=\kappa^{-\frac{1}{2}}\cos(\alpha t+a_1), & x_1&=\kappa^{-\frac{1}{2}}\sin(\alpha t+a_1),\displaybreak[0]\\\ 
y_1&=0, & z_1&=0,\displaybreak[0]\\\
w_2&=\kappa^{-\frac{1}{2}}\cos(\alpha t+a_2), & x_2&=\kappa^{-\frac{1}{2}}\sin(\alpha t +a_2),\\ y_2&=0, & z_2&=0,\displaybreak[0]\\\
w_3&=\kappa^{-\frac{1}{2}}\cos(\alpha t+a_3), & x_3&=\kappa^{-\frac{1}{2}}\sin(\alpha t +a_3),\\ y_3&=0, & z_3&=0,\displaybreak[0]\\
w_4&=0, & x_4&=0,\\ y_4&=\kappa^{-\frac{1}{2}}\cos(\beta t+b_4), & z_4&=\kappa^{-\frac{1}{2}}\sin(\beta t+b_4),\displaybreak[0]\\\
w_5&=0, & x_5&=0,\\ y_5&=\kappa^{-\frac{1}{2}}\cos(\beta t+b_5), & z_5&=\kappa^{-\frac{1}{2}}\sin(\beta t+b_5),\displaybreak[0]\\\
w_6&=0, & x_6&=0,\\ y_6&=\kappa^{-\frac{1}{2}}\cos(\beta t+b_6), & z_6&=\kappa^{-\frac{1}{2}}\sin(\beta t+b_6),
\end{align*}
where the constants $a_1,a_2,$ and $a_3$, with $0\le a_1<a_2<a_3<2\pi$, and $b_1,b_2,$ and $b_3$, with $0\le b_4<b_5<b_6<2\pi$, determine the shape of the first and second triangle, respectively.

Notice that for $t=0$, the position of the bodies is the fixed-point configuration described and proved to be as such in Subsection \ref{2-ell-non-equal}. Therefore we can apply Criterion \ref{fixed-doubly} to check whether $\bf q$ given in \eqref{2-triangles} is a $\kappa$-positive elliptic-elliptic relative equilibrium.  Again, as in Subsection \ref{2-ell-non-equal}, we can approach this problem in two ways. One is computational, and it consists of using the fact that the positions at $t=0$ form a fixed-point configuration to determine the relationships between the constants $m_1, m_2, m_3, a_1, a_2,$ and $a_3$, on one hand, and the constants $m_4,m_5,m_6, b_4, b_5,$ and $b_6$, on the other hand. It turns out that they reduce to conditions \eqref{w10}, \eqref{x10}, \eqref{y10}, and \eqref{z10}. Then we only need to remark that 
$$
r_1=r_2=r_3=\kappa^{-1/2}\ \ {\rm and}\ \ r_4=r_5=r_6=0,
$$
which means that condition (i) of Criterion \ref{fixed-doubly} is satisfied.
The other approach is to invoke again the result of \cite{DiacuPol} and a reasoning similar to the one we used to show that the position at $t=0$ is a fixed-point configuration. Both help us conclude that $\bf q$ given by \eqref{2-triangles} is a solution of system \eqref{second} for any $\alpha, \beta\ne 0$. 

Again, when $\alpha/\beta$ is rational, a case that corresponds to a negligible set of frequency pairs, the solutions are periodic. In the generic case, when $\alpha/\beta$ is irrational, the solutions are quasiperiodic.

A straightforward computation shows that the constants of the angular momentum integrals are 
$$
c_{wx}=(m_1+m_2+m_3)\kappa^{-1}\alpha\ne 0,\ \ c_{yz}=(m_1+m_2+m_3)\kappa^{-1}\beta\ne 0,
$$
$$
c_{wy}=c_{wz}=c_{xy}=c_{xz}=0,
$$
which means that the rotation takes place around the origin of the coordinate system only relative to the planes $wx$ and $yz$.

\section{Examples of $\kappa$-negative elliptic relative equilibria}

The class of examples we construct here is the analogue of the one presented in Subsection \ref{simply-Lag} in the case of the sphere, namely Lagrangian solutions (i.e.\ equilateral triangles) of equal masses for $\kappa<0$. In the
light of Remark \ref{remark-ellip0}, we expect that the bodies move on a 2-dimensional hyperboloid, whose curvature is not necessarily the same as the one of ${\mathbb H}_\kappa^3$. The existence of these orbits, and the fact that they occur only when the masses are equal, was first proved in \cite{Diacu1} and \cite{Diacu001}. So, in this case, we have $n=3$ and $m_1=m_2=m_3=:m>0$. The solution of the corresponding system \eqref{second} we are seeking is of the form
\begin{equation}
{\bf q}=({\bf q}_1,{\bf q}_2,{\bf q}_3), \ {\bf q}_i=(w_i,x_i,y_i,z_i),\ i=1,2,3,
\label{0elliptic2}
\end{equation}
\begin{align*}
w_1(t)&=r\cos\omega t,& x_1(t)&=r\sin\omega t,\\
 y_1(t)&=y\ ({\rm constant}),& z_1(t)&=z\ ({\rm constant}),\\
w_2(t)&=r\cos(\omega t+2\pi/3),& x_2(t)&=r\sin(\omega t+2\pi/3),\\
y_2(t)&=y\ ({\rm constant}),& z_2(t)&=z\ ({\rm constant}),\\
w_3(t)&=r\cos(\omega t+4\pi/3),& x_3(t)&=r\sin(\omega t+4\pi/3),\\
y_3(t)&=y\ ({\rm constant}),& z_3(t)&=z\ ({\rm constant}),
\end{align*}
with $r^2+y^2-z^2=\kappa^{-1}$. Consequently, for the equations occurring in Criterion \ref{cri-elliptic2}, we have
$$
r_1=r_2=r_3=:r, \ \ a_1=0, a_2=2\pi/3, a_3=4\pi/3,
$$
$$
y_1=y_2=y_3=:y\ ({\rm constant}),\ \ z_1=z_2=z_3=:z\ ({\rm constant}).
$$
Substituting these values into the equations \eqref{w3}, \eqref{x3}, \eqref{y3}, \eqref{z3}, we obtain either identities or the same equation as in Subsection \ref{simply-Lag}, namely
$$
\alpha^2=\frac{8m}{\sqrt{3}r^3(4-3\kappa r^2)^{3/2}}.
$$
Consequently, given $m>0, r>0$, and  $y, z$ with $r^2+y^2-z^2=\kappa^{-1}$ and $z>|\kappa|^{-1/2}$, we can always find two frequencies, 
$$
\alpha_1=\frac{2}{r}\sqrt{\frac{2m}{\sqrt{3}r(4-3\kappa r^2)^{3/2}}}\ \ {\rm and}\ \  \alpha_2=-\frac{2}{r}\sqrt{\frac{2m}{\sqrt{3}r(4-3\kappa r^2)^{3/2}}},
$$
such that system \eqref{second} has a solution of the form \eqref{0elliptic2}. The positive frequency corresponds to one sense of rotation, whereas the negative frequency corresponds to the opposite sense.

Notice that the bodies move on the 2-dimensional hyperboloid
$$
{\bf H}_{\kappa_0,y_0}^2=\{(w,x,y_0,z)\ \! | \ \! w^2+x^2-z^2=\kappa^{-1}-y_0^2,\ y_0={\rm constant}\},
$$
which has curvature $\kappa_0=-(y_0^2-\kappa^{-1})^{-1/2}$. When $y_0=0$, we have a great 2-dimensional hyperboloid, i.e.\ its curvature is $\kappa$, so the motion is in agreement with Remark \ref{remark-ellip}.

A straightforward computation shows that the constants of the angular momentum are
$$
c_{wx}=3m\kappa^{-1}\alpha\ne 0\ \ {\rm and}\ \ c_{wy}=c_{wz}=c_{xy}=c_{xz}=c_{yz}=0,
$$
which means that the rotation takes place around the origin of the coordinate system only relative to the plane $wx$.

\section{Examples of $\kappa$-negative hyperbolic relative equilibria}\label{3hyp}

In this section we will construct a class of $\kappa$-negative hyperbolic relative equilibria, for which, in agreement with Remark \ref{remark-hyp}, the
bodies rotate on a 2-dimensional hyperboloid of the same curvature as
${\mathbb H}_\kappa^3$. In the 2-dimensional case, the existence of a similar 
orbit was already pointed out in \cite{Diacu4}, where we have also proved it 
to be unstable. So let us check a solution of the form
\begin{equation}
{\bf q}=({\bf q}_1, {\bf q}_2, {\bf q}_3), \ {\bf q}_i=(w_i,x_i,y_i,z_i), \ i=1,2,3,
\label{hyp-4}
\end{equation}
\begin{align*}
w_1&=0,& x_1&=0,& y_1&=\frac{\sinh\beta t}{|\kappa|^{1/2}},& z_1&=\frac{\cosh\beta t}{|\kappa|^{1/2}},\\
w_2&=0,& x_2&=x\ {\rm (constant)},& y_2&=\eta\sinh\beta t,& z_2&=\eta\cosh\beta t,\\
w_3&=0,& x_3&=-x\ {\rm (constant)},& y_3&=\eta\sinh\beta t,& z_3&=\eta\cosh\beta t,
\end{align*}
with $x^2-\eta^2=\kappa^{-1}$. Consequently
$$
\eta_1=|\kappa|^{-1/2},\ \eta_2=\eta_3=:\eta \ {\rm (constant)},\ b_1=b_2=b_3=0.
$$
We then compute that
$$
\mu_{12}=\mu_{21}=\mu_{13}=\mu_{23}=-|\kappa|^{-1/2}\eta,\ \mu_{23}=\mu_{32}=-\kappa^{-1}-2\eta^2.
$$
We can now use Criterion \ref{crit-hyp} to determine whether a candidate of the form $\bf q$ given by \eqref{hyp-4} is a (simply rotating) $\kappa$-negative hyperbolic relative equilibrium. Straightforward computations lead us from equations \eqref{w4},
\eqref{x4}, \eqref{y4}, and \eqref{z4} either to identities or to the equation
$$
\beta^2=\frac{1-4\kappa\eta^2}{4\eta^3(|\kappa|\eta^2-1)^{3/2}}.
$$
Therefore, given $\kappa<0$ and $m, x,\eta>0$ with $x^2-\eta^2=\kappa^{-1}$,
there exist two non-zero frequencies, 
$$
\beta_1=\frac{1}{2\eta}\sqrt{\frac{1-4\kappa\eta^2}{\eta(|\kappa|\eta^2-1)^{3/2}}}\ \ {\rm and}\ \ \beta_2=-\frac{1}{2\eta}\sqrt{\frac{1-4\kappa\eta^2}{\eta(|\kappa|\eta^2-1)^{3/2}}},
$$
such that $\bf q$ given by \eqref{hyp-4} is a (simply rotating) $\kappa$-positive hyperbolic relative equilibrium. Notice that the motion takes place on the 2-dimensional hyperboloid
$$
{\bf H}_{\kappa,w}^2=\{(0,x,y,z)\ \! |\ \! x^2+y^2-z^2=\kappa^{-1}\}.
$$
A straightforward computation shows that the constants of the angular momentum are
$$
c_{wx}=c_{wy}=c_{wz}=c_{xy}=c_{xz}=0,\ c_{yz}=-m\beta(\kappa^{-1}+2\eta^2),
$$
which means that the rotation takes place relative to the origin of the coordinate system only relative to the plane $yz$.

\section{Examples of $\kappa$-negative elliptic-hyperbolic relative equilibria}

In this section we will construct a class of (doubly rotating) $\kappa$-negative elliptic-hyperbolic relative equilibria. In the light of Remark \ref{remark-ellip-hyp}, we expect that the motion cannot take place on any 2-dimensional hyperboloid of ${\mathbb H}_\kappa^3$. In fact, as we know from Theorem \ref{theorem3}, relative equilibria of this type may rotate on hyperbolic cylinders, which is also the case with the solution we introduce here.

Consider $\kappa<0$ and the masses $m_1=m_2=m_3=:m>0$. We will check a solution of the form  
\begin{equation}
{\bf q}=({\bf q}_1, {\bf q}_2, {\bf q}_3), \ {\bf q}_i=(w_i,x_i,y_i,z_i), \ i=1,2,3,
\label{hyp-3}
\end{equation}
\begin{align*}
w_1&=0,& x_1&=0,& y_1&=\frac{\sinh\beta t}{|\kappa|^{1/2}},& z_1&=\frac{\cosh\beta t}{|\kappa|^{1/2}},\\
w_2&=r\cos\alpha t,& x_2&=r\sin\alpha t,& y_2&=\eta\sinh\beta t,& z_2&=\eta\cosh\beta t,\\
w_3&=-r\cos\alpha t,& x_3&=-r\sin\alpha t,& y_3&=\eta\sinh\beta t,& z_3&=\eta\cosh\beta t.
\end{align*}

In terms of the form \eqref{elliptic-hyperbolic-summarized} of a hyperbolic solution, \eqref{hyp-3} is realized when
$$
r_1=0, r_2=r_3=:r,\ \eta_1=|\kappa|^{-1/2}, \eta_2=\eta_3=:\eta, 
$$
$$
a_1=a_2=0, a_3=\pi,\ {\rm and}\  b_1=b_2=b_3=0.
$$
Substituting these values into the equations \eqref{w5}, \eqref{x5}, \eqref{y5}, \eqref{z5} of Criterion \ref{crit-elliptic-hyp} and using the fact that $r^2-\eta^2=\kappa^{-1}$, we obtain the equation
$$
\alpha^2+\beta^2=\frac{m(4|\kappa|\eta^2+1)}{4\eta^3(-\kappa\eta^2-1)^{3/2}},
$$
for which there are infinitely many values of $\alpha$ and $\beta$ that satisfy it. Therefore, for any $\kappa<0$, masses $m_1=m_2=m_3=:m>0$, and $r,\eta$ with $r^2-\eta^2=\kappa^{-1}$, there are infinitely many frequencies $\alpha$ and $\beta$ that correspond to a $\kappa$-negative elliptic-hyperbolic relative equilibrium of the form \eqref{hyp-3}.

The bodies $m_2$ and $m_3$ move on the same hyperbolic cylinder, namely
${\bf C}_{r\eta}^2$, which has constant positive curvature, while $m_1$ moves on the degenerate hyperbolic cylinder ${\bf C}_{0|\kappa|^{-1/2}}^2$, which is a geodesic hyperbola, therefore has zero curvature. The motion is neither periodic nor quasiperiodic.

A straightforward computation shows that the constants of the angular momentum are 
$$c_{wx}=2m\alpha r^2, c_{yz}=-|\kappa|^{-1}-2\beta\eta^2,
c_{wy}=c_{wz}=c_{xy}=c_{xz}=0,
$$
which means that the rotation takes place around the origin of the coordinate system only relative to the $wx$ and $yz$ planes.

\newpage

\part{\rm CONCLUSIONS}

In this final part of the paper, we aim to emphasize 3 aspects related to the curved $n$-body problem. We will first analyze the issue of stability of orbits and then present some open problems that arise from the results we obtained here in the direction of relative equilibria and the stability of periodic and quasiperiodic solutions.

\section{Stability}\label{stab}

An important problem to explore in celestial mechanics is that of the stability of solutions of the $n$-body problem. Unless the celestial orbits proved to exist have some stability properties, they will not be found in the universe. Of course, stability is only a necessary, but not a sufficient, condition for the existence of certain orbits in nature. If the solutions require additional properties, such as equal masses, then they are unlikely to show up in the universe either. Therefore some of the orbits mathematically proved to be stable can be observed in our planetary system, such as the Lagrangian relative equilibria formed by the Sun, Jupiter, and the Trojan asteroids, when one mass is very small. Other orbits, such as the figure eight solutions, \cite{Chenciner}, which require equal masses, have not been discovered in our solar system, although we have strong numerical evidence for their stability. No unstable orbit has ever been observed.

From the mathematical point of view, it is important to distinguish between the several levels of stability that appear in the literature. Liapunov stability, for instance, is an unlikely property in celestial mechanics. No expert in the field expects to find Liapunov stable orbits too often. The reason for this scepticism is that not even the classical elliptic orbits of the Euclidean Kepler problem posses this property. Indeed, think of two elliptic orbits that are close to each other, each of them accommodating a body of the same mass. Then the body on the larger ellipse moves slower than the other, so, after enough time elapses, the body moving on the inner orbit will get well ahead of the body moving on the outer orbit, thus proving that Liapunov stability cannot take place. Therefore the most we usually hope for in celestial mechanics from this point of view is orbital stability. However, most of the time we are happy to find linearly stable solutions.

In October 2010, I asked Carles Sim\'o whether he would like to investigate numerically the stability of the Lagrangian solutions of the curved 3-body problem. He accepted the challenge in January 2011, and came up with much more than some numerical insight. Together with Regina Mart\'inez, he studied the case of 3 equal masses, $m_1=m_2=m_3=1$, for the sphere ${\mathbb S}^2$ of radius 1. In the end, they found an analytic proof of the following result.

\begin{theorem} {\bf (Mart\'inez-Sim\'o)}\label{martinez-simo}
Consider the Lagrangian solution, with masses $m_1=m_2=m_3=1$ of the 2-dimensional curved 3-body problem given by system \eqref{second} with $n=3$ and $\kappa=1$, i.e.\ on the sphere ${\mathbb S}^2$ of radius 1, embedded in $\mathbb R^3$, with a system of coordinates $x,y,z$. Let the motion take place
on a non-geodesic circle of radius $r$ in the plane $z=$ constant, with $0<z=\sqrt{1-r^2}<1$. Then the orbits are linearly stable (or totally elliptic) for $r\in(r_1,r_2)\cup(r_3,1)$ and linearly unstable for $r\in(0,r_1)\cup(r_2, r_3)$, where, with a 35-digit approximation,
$$
r_1 = 0.55778526844099498188467226566148375,
$$
$$
r_2 = 0.68145469725865414807206661241888645,
$$
$$
r_3 = 0.92893280143637470996280353121615412.
$$
In the unstable domains, the local behaviour around the orbits consists of two elliptic planes and a complex saddle. At the values $r_1, r_2$, and $r_3$, there occur Hamiltonian-Hopf bifurcations.
\end{theorem}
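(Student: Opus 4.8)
The plan is to reduce the question to the spectral analysis of a linear Hamiltonian system obtained by linearizing system \eqref{second} (with $n=3$, $\kappa=1$, $m_1=m_2=m_3=1$) about the rotating Lagrangian orbit. First I would restrict the equations of motion to the sphere $\mathbb{S}^2$ embedded in $\mathbb{R}^3$, where the relative equilibrium is the equilateral triangle inscribed in the circle $z=\sqrt{1-r^2}$, rotating uniformly about the $z$-axis with frequency $\omega$ given by the analogue of the relation in Section 16, namely $\omega^2 = 8/[\sqrt{3}\,r^3(4-3r^2)^{3/2}]$. Passing to a frame co-rotating about the $z$-axis with angular speed $\omega$ turns this periodic orbit into an equilibrium point $\mathbf{q}^*$ of the transformed autonomous system. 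Linearizing at $\mathbf{q}^*$ yields a constant-coefficient linear Hamiltonian system $\dot{\xi}=L\xi$, whose stability type is read off from the eigenvalues of $L$: linear (total) ellipticity corresponds to all eigenvalues lying on the imaginary axis with $L$ semisimple, while a complex saddle corresponds to a Krein quadruple $\pm a\pm b\,i$ with $a,b\neq 0$.

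The key simplification is to exploit the symmetries before computing the characteristic polynomial. The equilibrium $\mathbf{q}^*$ is invariant under the dihedral group $D_3$ acting by permuting the three equal masses together with the reflection fixing the rotation axis; since $L$ commutes with this action and with the residual $SO(2)$ rotation, it block-diagonalizes according to the isotypic decomposition of the $D_3$-representation on the tangent space. This splits the $12$-dimensional phase space into a synchronous block (the trivial representation, carrying the directions tangent to the group orbit and to the level sets of the energy and angular-momentum integrals of Section 7, hence the trivial eigenvalues $0$) and a block on which the genuine relative dynamics live. I would isolate the nontrivial factor of the characteristic polynomial $p(\lambda;r)$, which, after the constraints $\mathbf{q}_i\cdot\mathbf{q}_i=1$ and the integrals are used, becomes a polynomial in $\lambda^2$ whose coefficients are explicit rational functions of $r^2$ (through $z^2=1-r^2$ and $\omega^2(r)$).

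Stability then reduces to asking when all roots $\mu=\lambda^2$ of this reduced factor are real and negative. Writing the relevant factor as a product of quadratics in $\mu$, the boundaries between ellipticity and the complex-saddle regime occur exactly where a discriminant vanishes or where a root $\mu$ changes sign, i.e.\ where two purely imaginary eigenvalue pairs of opposite Krein signature collide. The critical radii $r_1,r_2,r_3$ are precisely the roots in $(0,1)$ of the resulting discriminant/resultant condition; I would verify by a sign analysis that the ordering and the stability/instability pattern on $(0,r_1)$, $(r_1,r_2)$, $(r_2,r_3)$, $(r_3,1)$ is as stated, and that at each $r_k$ the colliding pairs carry opposite signature, the defining feature of a Hamiltonian-Hopf bifurcation. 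The $35$-digit values follow by solving the explicit polynomial condition numerically to high precision.

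The hardest part will be the careful bookkeeping of the linearization: correctly separating the trivial eigenvalues forced by the constraints $\mathbf{q}_i\cdot\mathbf{q}_i=1$, $\mathbf{q}_i\cdot\mathbf{p}_i=0$ and by the conserved quantities from the dynamically meaningful ones, and computing the entries of $L$---in particular the second derivatives of the cotangent potential evaluated at the equilateral configuration---without error. Equally delicate is establishing the \emph{Krein-signature} statement: merely locating eigenvalue collisions does not by itself distinguish a genuine Hamiltonian-Hopf bifurcation (a signature-changing collision producing instability) from a harmless crossing, so one must track the signature of each imaginary pair across the parameter range, which is where the Hamiltonian normal-form structure of $L$ must be used explicitly.
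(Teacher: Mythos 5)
The first thing to say is that this statement has no proof in the paper you were given: Theorem~\ref{martinez-simo} is quoted as a result of Mart\'inez and Sim\'o, whose proof lives in their own paper \cite{simo}. The present paper only records two facts about that proof --- that the sole numerical ingredient is the computation of $r_1,r_2,r_3$ as roots of a certain characteristic polynomial, and that the rest of the argument is analytic. So there is no proof here to compare yours against line by line; the most one can check is whether your plan is consistent with that description, and it is: rotating frame, linearization of the constrained Hamiltonian system, reduction of the characteristic polynomial to a polynomial in $\lambda^2$ with coefficients rational in $r^2$, and transition radii located as discriminant conditions. Your setup data are also correct --- the frequency relation $\omega^2=8/[\sqrt{3}\,r^3(4-3r^2)^{3/2}]$ matches the formula of Section 16 with $m=1$, $\kappa=1$, and the $12$-dimensional phase-space count for three bodies on $\mathbb{S}^2$ is right.

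That said, what you have written is a plan, not a proof, and the gap is exactly where the theorem's content lies. Every quantitatively decisive step is deferred: you never exhibit the linearization $L$ or the second derivatives of the cotangent potential at the equilateral configuration, never produce the reduced factor of the characteristic polynomial, never carry out the sign/discriminant analysis that shows there are precisely \emph{three} critical radii in $(0,1)$ with the stated pattern (unstable, stable, unstable, stable) on the four subintervals, and never actually track the Krein signatures that would certify the collisions at $r_1,r_2,r_3$ as genuine Hamiltonian--Hopf bifurcations producing a complex quadruple $\pm a\pm b\,i$ (the ``complex saddle'') rather than harmless crossings. Nothing in the outline forces the answer to come out as claimed --- for instance, symmetry reduction alone cannot tell you whether the unstable blocks consist of one quadruple plus two elliptic planes, nor whether the number of transitions is three rather than two or four; that information only emerges from the explicit polynomial, which is precisely what is missing. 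You flag these computations yourself as ``the hardest part,'' which is an accurate self-assessment: until they are done, the statement is not established.
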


In the proof of the above theorem, the only numerical computation is that of $r_1, r_2$, and $r_3$, as roots of a certain characteristic polynomial. But the existence of these roots is easy to demonstrate, so the proof is entirely analytic, \cite{simo}. 

As expected, when $r$ is close to 0, i.e.\ when approaching the Euclidean plane, the Lagrangian orbit is unstable, as it happens with the classical Lagrangian solution of equal masses. In the Euclidean case, the Lagrangian orbits are stable only when one mass is very small. It is therefore surprising to discover two zones of stability, which occur in the intervals $(r_1,r_2)$ and $(r_3,1)$, in the curved problem. This important result implies that the curvature of the space has decisive influence over the stability of orbits, a fact that was not previously known.

\section{Future perspectives}

More than 170 years after Bolyai and Lobachevsky suggested the study of gravitational motions in hyperbolic space, the recent emergence of the $n$-body problem in spaces of constant curvature, given by a system of differential equations set in a context that unifies the positive and the negative case, provides a new and widely open area of research. The results obtained in this paper raise several interesting questions, which I will outline below. 

\subsection{Relative equilibria} 

As shown here, the curved $n$-body problem has 5 natural types of relative equilibria: 2 in ${\mathbb S}_\kappa^3$ and 3 in ${\mathbb H}_\kappa^3$. Criteria 1 through 7 provide the necessary conditions for the existence of these relative equilibria. Let us consider the masses $m_1,m_2,\dots, m_n>0,\ n\ge 3$. We have seen in the above examples that if the geometric configuration of a relative equilibrium has a certain size, and the orbit is not generated from a fixed-point configuration, then we can find 2 frequencies (equal in size and of opposite signs, $\alpha$ and $-\alpha$) in the case of orbits with a single rotation. If such a relative equilibrium is generated from a fixed-point configuration, any non-zero frequency yields a solution. When a double rotation takes place, we can find two distinct frequencies, $\alpha$ and $\beta$. Let us factorize the set of these relative equilibria by geometric size and by the value of the frequencies. In other words, all we care about is the shape of the relative equilibrium. While in the Euclidean case there is only one way to look at the shape of a configuration, the concept is a bit more complicated here since we discovered orbits, such as those of the curved 6-body problem when 3 bodies move along a great circle of a great sphere of ${\mathbb S}_\kappa^3$ and the other 3 bodies move along a complementary great circle of another great sphere, where there is not only one shape to take into consideration. Nevertheless, a decomposition of the orbit into a finite number of shapes solves the problem.

We can now ask how many classes of relative equilibria exist in various contexts, such as for each of the 5 types of relative equilibria proved to exist or for $n=3,4, 5,\dots$ If we restrict to simply asking whether the number of such classes is finite or infinite, and if it is infinite, whether the set is discrete or contains a continuum, then the question extends Smale's Problem 6, discussed at the beginning of Part 5, to spaces of constant curvature. But even finding new classes of relative equilibria for various values of $n$ and $m_1,m_2,\dots,m_n>0$ is a problem worth pursuing.

\subsection{Stability of periodic and quasiperiodic orbits}

We referred in Section \ref{stab} to the stability of Lagrangian solutions in the case
of curvature $\kappa=1$. Since Theorem \ref{martinez-simo} is proved only for that particular case, it would be interesting to solve the general problem in order to understand how the stability of these orbits changes for various values of the curvature $\kappa$, using a method similar to the one Mart\'inez and Sim\'o developed in \cite{simo}. The same method can be further applied to all the periodic orbits we constructed in Part 5 of this paper. Moreover, Mart\'inez and Sim\'o analyzed in \cite{simo}, for $\kappa=1$, the stability of a Lagrangian homographic orbit, which turns out not to be periodic, but quasiperiodic, as shown in \cite{Diacu4}. Again their method could be further applied to other quasiperiodic orbits, such as the relative equilibria constructed in Subsections \ref{pair-equal-masses} and \ref{alpha-beta}, towards getting a better understanding of how curvature affects the stability of orbits. In
the mean time, it has been used to determine the stability of tetrahedral orbits in
$\mathbb S^2$, where one body, of mass $M$, is fixed at the north pole, while
the other bodies, all of mass $m$, lie at the vertices of an equilateral triangle that rotates in a plane parallel with the plane of the equator, \cite{DiacuMartinez}. This paper also opens new avenues, such as the study of the orbit's stability in $\mathbb S^3$.

\bigskip 

\noindent{\bf Acknowledgment.} The research presented above was supported in part by a Discovery Grant from NSERC of Canada. I am indebted to the anonymous referee for his/her thorough report, which helped me improve the content of this paper.

\newpage


\end{document}